\begin{document}

\begin{center}
{\bf Explicit determinantal  formulas for  solutions to the generalized Sylvester quaternion matrix equation  and   its special cases.}\end{center}
\begin{center}{ \textbf{\emph{Ivan I. Kyrchei} }\footnote{kyrchei@online.ua,\\ Pidstrygach Institute for Applied Problems of Mechanics and Mathematics of NAS of Ukraine,
 Ukraine} }\end{center}

\begin{abstract}
Within the framework of the theory of quaternion column-row determinants and
 using  determinantal representations of  the Moore-Penrose inverse previously obtained by the author, we get  explicit determinantal representation formulas  of  solutions
 (analogs of Cramer's rule) to the quaternion  two-sided generalized Sylvester matrix equation $  {\bf A}_{1}{\bf X}_{1}{\bf B}_{1}+ {\bf A}_{2}{\bf X}_{2}{\bf B}_{2}={\bf C}$ and its all special cases when its first term or both terms are one-sided. Finally, we derive determinantal representations of two like-Lyapunov equations.  \end{abstract}

\textbf{Keywords} Matrix equation;   Sylvester matrix equation; Lyapunov  matrix equation; Cramer
Rule; quaternion matrix; noncommutative determinant

\textbf{Mathematics subject classifications} 15A24, 15A09, 15A15, 15B33.
\newtheorem{thm}{Theorem}[section]
 \newtheorem{cor}[thm]{Corollary}
 \newtheorem{lem}[thm]{Lemma}
 \newtheorem{prop}[thm]{Proposition}
 \theoremstyle{definition}
 \newtheorem{defn}[thm]{Definition}
 \theoremstyle{remark}
 \newtheorem{rem}[thm]{Remark}
 \newtheorem*{ex}{Example}
 \numberwithin{equation}{section}
\newcommand{\rank}{\mathop{\rm rank}\nolimits}

\section{Introduction}

Let  $ {\mathbb{H}}^{m\times n}$ and $ {\mathbb{H}}^{m\times n}_{r}$  stand for  the set of all  $m\times n$ matrices  and matrices  with  rank $r$, respectively, over the quaternion skew field
$ {\mathbb{H}}=\{a_{0}+a_{1}i+a_{2}j+a_{3}k\,
|\,i^{2}=j^{2}=k^{2}=-1,\, a_{0}, a_{1}, a_{2}, a_{3}\in{\rm
{\mathbb{R}}}\},$
where $ {\mathbb{R}}$ is  the real number field. In this paper,  we  investigate  the two-sided coupled generalized Sylvester matrix equation  over  $ {\mathbb{H}}$,
\begin{equation}\label{eq:tss1}
 {\bf A}_{1}{\bf X}_{1}{\bf B}_{1}+ {\bf A}_{2}{\bf X}_{2}{\bf B}_{2}={\bf C}.
\end{equation}
Since Sylvester-type  matrix  equations  have  wide  applications in  several  fields (see, e.g.\cite{chen,shah,syrm1,syrm2,varg,zhang1}), these equations are thoroughly studied and there are many important results about them (see,e.g.\cite{deh,liao,peng,shi,wua,xu,zhou}).  Mansour \cite{mans} studied the solvability condition of  (\ref{eq:tss1}) in the operator algebra.
Liping \cite{lip} has began  investigations of a similar equation over  the quaternion skew field. Baksalary and Kala \cite{bak} derived  the general solution to (\ref{eq:tss1}) expressed in terms of generalized inverses that has been extended to the quaternion skew field in
\cite{wang04,wang09}. Quaternion matrix equations similar to  Eq.(\ref{eq:tss1}) have been recently investigated by many authors
(see, e.g. \cite{csong1,csong2,he2,he3,fut,reh1,reh2,reh3,sim,yuan1,yuan2,yuan3,zhan}).

The main goal of this paper is to derive  determinantal representations of the general
solution to Eq.(\ref{eq:tss1}) and its all special cases over the quaternion skew field  using previously obtained determinantal representations of the Moore-Penrose inverse. Evidently, determinantal representation of a solution of a matrix equation (which is expressed in terms of generalized inverses) gives a direct method of its finding analogous to  classical Cramer's rule (when a solution is expressed by an usual inverse) that has important theoretical and practical significance \cite{kyr_nov}.

 For $ {\bf A}
 \in  {\mathbb{H}}^{m\times n}$, the symbol $ {\bf A}^{ *}$ stands for  conjugate transpose (Hermitian adjoint)
of $ {\bf A}$.
 A matrix $ {\bf A}  \in
{\mathbb{H}}^{n\times n}$ is Hermitian if $ {\bf
A}^{ *}  =  {\bf A}$.
The  definition of the Moore-Penrose inverse matrix has been  extended  to quaternion matrices as follows.
\begin{defn}
The Moore-Penrose inverse of ${\bf A}\in{\rm {\mathbb{H}}}^{m\times n}$, denoted by ${\bf A}^{\dagger}$, is the unique matrix ${\bf A}^{\dagger}\in{\rm {\mathbb{H}}}^{n\times m}$ satisfying the following four equations
$$
\text{1. } {\bf A}{\bf A}^{\dagger}{\bf A}={\bf A},~\text{2. } {\bf A}^{\dagger}{\bf A}{\bf A}^{\dagger}={\bf A}^{\dagger},~\text{3. } ({\bf A}{\bf A}^{\dagger})^{*}={\bf A}{\bf A}^{\dagger},~\text{4. } ({\bf A}^{\dagger}{\bf A})^{*}={\bf A}^{\dagger}{\bf A}.
$$
\end{defn}
The problem for determinantal representation of quaternion generalized  inverses as well as solutions   and generalized inverse solutions of  quaternion matrix equations  only now can be solved due to the theory of column-row determinants introduced in \cite{kyr2,kyr3}.
Within the framework of the theory of column-row determinants, determinantal representations of various generalized quaternion inverses and generalized inverse solutions to quaternion matrix
equations have been derived by the author (see, e.g.\cite{kyr4,kyr5,kyr6,kyr7,kyr8,kyr9,kyr10,kyr11,kyr12,kyr13}) and by other researchers (see, e.g.\cite{song1,song2,song3,song5}).

The paper is organized as follows.  In Subsection 2, we start with   introduction of  row-column determinants, and determinantal representations of the Moore-Penrose inverse and of solutions to the quaternion matrix equation ${\bf A}{\bf X} {\bf B} =  {\bf C}$ and its special cases previously obtained within the framework of the theory   of  row-column determinants.  The explicit determinantal representation solution  to Eq.(\ref{eq:tss1})  is derived in  Section 3.  In Subsection 4, we give Cramer's rules to special cases of (\ref{eq:tss1}) when only one term of the equation is two-sided. In Subsection 5 and 6, we get Cramer's rules to special cases of (\ref{eq:tss1}) when both terms of the equation are one-sided, and to some two similar Lyapunov equations, respectively.
A numerical example to illustrate the main results is considered in   Section 7. Finally, in Section 8, the conclusions are drawn.

\section{Preliminaries. Elements of the theory of  row-column determinants.}

 For  ${\rm {\bf A}} \in
{\mathbb{H}}^{n\times n}$, we define $n$ row determinants and $n$ column determinants.
Suppose $S_{n}$ is the symmetric group on the set $I_{n}=\{1,\ldots,n\}$.
\begin{defn}
 \emph{The $i$th row determinant} of ${\rm {\bf A}}=(a_{ij}) \in
{\mathbb{H}}^{n\times n}$ is defined  for all $i =1,\ldots,n $
by putting
 \begin{gather*}{\rm{rdet}}_{ i} {\rm {\bf A}} =
\sum\limits_{\sigma \in S_{n}} \left( { - 1} \right)^{n - r}({a_{i{\kern
1pt} i_{k_{1}}} } {a_{i_{k_{1}}   i_{k_{1} + 1}}} \ldots   {a_{i_{k_{1}
+ l_{1}}
 i}})  \ldots  ({a_{i_{k_{r}}  i_{k_{r} + 1}}}
\ldots  {a_{i_{k_{r} + l_{r}}  i_{k_{r}} }}),\\
\sigma = \left(
{i\,i_{k_{1}}  i_{k_{1} + 1} \ldots i_{k_{1} + l_{1}} } \right)\left(
{i_{k_{2}}  i_{k_{2} + 1} \ldots i_{k_{2} + l_{2}} } \right)\ldots \left(
{i_{k_{r}}  i_{k_{r} + 1} \ldots i_{k_{r} + l_{r}} } \right),\end{gather*}
where $i_{k_{2}} < i_{k_{3}}  < \cdots < i_{k_{r}}$ and $i_{k_{t}}  <
i_{k_{t} + s} $ for all $t = 2,\ldots,r $ and  $s =1,\ldots,l_{t} $.
\end{defn}
\begin{defn}
\emph{The $j$th column determinant}
 of ${\rm {\bf
A}}=(a_{ij}) \in
{\mathbb{H}}^{n\times n}$ is defined for
all $j =1,\ldots,n $ by putting
 \begin{gather*}{\rm{cdet}} _{{j}}  {\bf A} =
\sum\limits_{\tau \in S_{n}} ( - 1)^{n - r}(a_{j_{k_{r}}
j_{k_{r} + l_{r}} } \ldots a_{j_{k_{r} + 1} j_{k_{r}} })  \ldots  (a_{j
j_{k_{1} + l_{1}} }  \ldots  a_{ j_{k_{1} + 1} j_{k_{1}} }a_{j_{k_{1}}
j}),\\
\tau =
\left( {j_{k_{r} + l_{r}}  \ldots j_{k_{r} + 1} j_{k_{r}} } \right)\ldots
\left( {j_{k_{2} + l_{2}}  \ldots j_{k_{2} + 1} j_{k_{2}} } \right){\kern
1pt} \left( {j_{k_{1} + l_{1}}  \ldots j_{k_{1} + 1} j_{k_{1} } j}
\right), \end{gather*}
\noindent where $j_{k_{2}}  < j_{k_{3}}  < \cdots <
j_{k_{r}} $ and $j_{k_{t}}  < j_{k_{t} + s} $ for  $t = {2,\ldots,r} $
and $s = {1,\ldots,l_{t}}  $.
\end{defn}
Since  \cite{kyr2} for Hermitian ${\rm {\bf A}}$ we have
 $${\rm{rdet}} _{1}  {\bf A} = \cdots = {\rm{rdet}} _{n} {\bf
A} = {\rm{cdet}} _{1}  {\bf A} = \cdots = {\rm{cdet}} _{n}  {\bf
A} \in  {\mathbb{R}},$$  its
determinant  is defined by  putting
$\det {\bf A}: = {\rm{rdet}}_{{i}}\,
{\bf A} = {\rm{cdet}} _{{i}}\, {\bf A} $
 for all $i =1,\ldots,n$.

We shall use the following notation. Let $\alpha : = \left\{
{\alpha _{1},\ldots,\alpha _{k}} \right\} \subseteq {\left\{
{1,\ldots ,m} \right\}}$ and $\beta : = \left\{ {\beta _{1}
,\ldots ,\beta _{k}} \right\} \subseteq {\left\{ {1,\ldots ,n}
\right\}}$ be subsets with $1 \le k \le \min {\left\{
{m,n} \right\}}$.
Let $ {\bf A}_{\beta} ^{\alpha} $ be a submatrix of $ {\bf A}
 \in  {\mathbb{H}}^{m\times n}$  whose rows are indexed by $\alpha$ and whose
columns are indexed by $\beta$. Similarly, let $ {\bf A}_{\alpha} ^{\alpha} $ be a principal submatrix of $ {\bf A}$ whose rows and columns are
indexed by $\alpha$.
 If $ {\bf A}$ is Hermitian, then
$ |{\bf A}|_{\alpha} ^{\alpha} $ denotes the
corresponding principal minor of $\det  {\bf A}$.
 For $1 \leq k\leq n$, the collection of strictly
increasing sequences of $k$ integers chosen from $\left\{
{1,\ldots ,n} \right\}$ is denoted by $\textsl{L}_{ k,
n}: = {\left\{ {\alpha :\alpha = \left( {\alpha _{1} ,\ldots
,\alpha _{k}} \right),\,{\kern 1pt} 1 \le \alpha _{1} < \cdots
< \alpha _{k} \le n} \right\}}$.  For fixed $i \in \alpha $ and $j \in
\beta $, let $I_{r, m} {\left\{ {i} \right\}}: = {\left\{
{\alpha :\alpha \in L_{r, m} ,i \in \alpha}  \right\}}$,
$J_{r, n} {\left\{ {j} \right\}}: = {\left\{ {\beta :\beta
\in L_{r, n}, j \in \beta}  \right\}}$.

 Let $ {\bf a}_{\,\textbf{.}j} $ be the $j$th column and $ {\bf
a}_{i\,\textbf{.}} $ be the $i$th row of $ {\bf A}$. Suppose $ {\bf
A}_{\,\textbf{.}j} \left(  {\bf b} \right)$ denotes the matrix obtained from
$ {\bf A}$ by replacing its $j$th column with the column ${\bf
b}$, and ${\bf A}_{i\,\textbf{.}} \left(  {\bf b} \right)$ denotes the
matrix obtained from $ {\bf A}$ by replacing its $i$th row with the
row ${\bf b}$.
 Denote the $j$th column  and the $i$th row of  $
{\bf A}^{*} $ by $ {\bf a}_{\,\textbf{.}j}^{*} $ and $ {\bf
a}_{i\,\textbf{.}}^{*} $, respectively.
\begin{thm} \cite{kyr4}\label{theor:det_repr_MP}
If $ {\bf A} \in  {\mathbb{H}}_{r}^{m\times n} $, then
the Moore-Penrose inverse  $ {\bf A}^{ \dag} = \left( {a_{ij}^{
\dag} } \right) \in  {\mathbb{H}}^{n\times m} $ have the
following determinantal representations
  \begin{equation}
\label{eq:det_repr_A*A}
 a_{ij}^{ \dag}  = {\frac{{{\sum\limits_{\beta
\in J_{r,n} {\left\{ {i} \right\}}} {{\rm{cdet}} _{i} \left(
{\left( { {\bf A}^{ *}  {\bf A}} \right)_{. i}
\left( { {\bf a}_{.j}^{ *} }  \right)} \right)
 _{\beta} ^{\beta} } } }}{{{\sum\limits_{\beta \in
J_{r,n}} {{\left|   {\bf A}^{ *}  {\bf A}
\right| _{\beta} ^{\beta}  }}} }}},
\end{equation}
or
  \begin{equation}
\label{eq:det_repr_AA*} a_{ij}^{ \dag}  =
{\frac{{{\sum\limits_{\alpha \in I_{r,m} {\left\{ {j} \right\}}}
{{\rm{rdet}} _{j} \left( {( {\bf A} {\bf A}^{ *}
)_{j.} ( {\bf a}_{i.}^{ *} )} \right)_{\alpha}
^{\alpha} } }}}{{{\sum\limits_{\alpha \in I_{r,m}}  {{
{\left|  {\bf A}{\bf A}^{ *}  \right|
_{\alpha} ^{\alpha} } }}} }}}.
\end{equation}
 \end{thm}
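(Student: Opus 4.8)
The plan is to establish the determinantal representations of the Moore–Penrose inverse by reducing the quaternion problem to a commutative one via the associated Hermitian matrices $\mathbf{A}^{*}\mathbf{A}$ and $\mathbf{A}\mathbf{A}^{*}$, which by the remark following Definition~3 possess a genuine real-valued determinant. First I would use the well-known full-rank skeleton factorization of the Moore–Penrose inverse, namely $\mathbf{A}^{\dagger} = (\mathbf{A}^{*}\mathbf{A})^{\dagger}\mathbf{A}^{*} = \mathbf{A}^{*}(\mathbf{A}\mathbf{A}^{*})^{\dagger}$, which holds over $\mathbb{H}$ exactly as it does over $\mathbb{C}$ and follows directly from the four defining Penrose equations. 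This shifts the burden to finding a determinantal representation of $(\mathbf{A}^{*}\mathbf{A})^{\dagger}$ (respectively $(\mathbf{A}\mathbf{A}^{*})^{\dagger}$), where the middle factor is now Hermitian and positive semidefinite of rank $r$.

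For the Hermitian factor, the key step is a limiting or adjugate-type argument. I would consider the perturbed matrix $\mathbf{A}^{*}\mathbf{A} + \lambda \mathbf{I}$, which is Hermitian positive definite and hence invertible for $\lambda > 0$, and express its inverse through the classical Cramer/adjugate formula in terms of column determinants; here I rely on the fact that for Hermitian quaternion matrices the row and column determinants coincide and behave like an ordinary determinant, so the usual cofactor expansion is valid. Taking the limit $\lambda \to 0^{+}$ and using the Cauchy–Binet-type expansion of the characteristic polynomial of $\mathbf{A}^{*}\mathbf{A}$ in terms of its principal minors $|\mathbf{A}^{*}\mathbf{A}|_{\beta}^{\beta}$ over $\beta \in J_{r,n}$ isolates the coefficient corresponding to the rank $r$, which produces precisely the denominator $\sum_{\beta \in J_{r,n}} |\mathbf{A}^{*}\mathbf{A}|_{\beta}^{\beta}$ appearing in~(\ref{eq:det_repr_A*A}). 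The numerator then emerges as the matching sum of bordered principal minors, i.e.\ the column-determinant of $(\mathbf{A}^{*}\mathbf{A})_{.\,i}(\mathbf{a}_{.j}^{*})$ restricted to each principal index set $\beta$.

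Finally, I would reassemble: multiplying the representation of $(\mathbf{A}^{*}\mathbf{A})^{\dagger}$ on the right by $\mathbf{A}^{*}$ and simplifying the resulting product of column-determinant sums yields formula~(\ref{eq:det_repr_A*A}); the dual argument applied to $\mathbf{A}\mathbf{A}^{*}$, using row determinants in place of column determinants and the factorization $\mathbf{A}^{*}(\mathbf{A}\mathbf{A}^{*})^{\dagger}$, yields~(\ref{eq:det_repr_AA*}). The main obstacle I anticipate is the noncommutativity of $\mathbb{H}$: ordinary cofactor identities, multiplicativity of determinants, and the adjugate formula are \emph{not} valid for arbitrary quaternion matrices, so each step must be justified either by restricting attention to the Hermitian matrices (where column and row determinants agree and a true determinant exists) or by carefully tracking the left/right placement of the noncommutative factors when the rectangular matrix $\mathbf{A}^{*}$ is brought back in. Keeping the bordering column $\mathbf{a}_{.j}^{*}$ on the correct side throughout, and verifying that the limiting argument legitimately commutes with the column-determinant expansion, is where the delicate work lies.
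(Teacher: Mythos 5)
A preliminary remark: the paper itself contains no proof of Theorem \ref{theor:det_repr_MP}; it is imported as a preliminary from \cite{kyr4}, so your proposal has to be measured against the argument of that cited source. That argument is, in outline, the strategy you describe: regularize, apply the quaternionic Cramer's rule for the Hermitian positive definite matrix $\lambda\mathbf{I}+\mathbf{A}^{*}\mathbf{A}$ (where all row and column determinants coincide and a genuine determinant exists), expand numerator and denominator as polynomials in $\lambda$ whose coefficients are sums of principal (respectively bordered) minors, and let $\lambda\to 0^{+}$ using that all minors of order exceeding $r$ vanish; dually for (\ref{eq:det_repr_AA*}).

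There is, however, one step in your plan that fails as stated. You propose first to obtain a determinantal representation of $(\mathbf{A}^{*}\mathbf{A})^{\dagger}$ as $\lim_{\lambda\to 0^{+}}(\mathbf{A}^{*}\mathbf{A}+\lambda\mathbf{I})^{-1}$, and only at the end to ``reassemble'' by multiplying on the right by $\mathbf{A}^{*}$. When $r<n$ that limit does not exist: on the kernel of $\mathbf{A}^{*}\mathbf{A}$ the entries of $(\mathbf{A}^{*}\mathbf{A}+\lambda\mathbf{I})^{-1}$ grow like $\lambda^{-1}$ (already for $\mathbf{A}^{*}\mathbf{A}={\rm diag}(1,0)$ the regularized inverses are ${\rm diag}\bigl(1/(1+\lambda),1/\lambda\bigr)$). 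The Moore--Penrose inverse of a singular matrix is not the limit of the inverses of its Tikhonov regularizations. The cure --- and this is what \cite{kyr4} actually does --- is to attach $\mathbf{A}^{*}$ \emph{before} passing to the limit, via $\mathbf{A}^{\dagger}=\lim_{\lambda\to 0^{+}}(\mathbf{A}^{*}\mathbf{A}+\lambda\mathbf{I})^{-1}\mathbf{A}^{*}$: Cramer's rule applied to the right-hand side $\mathbf{a}_{.j}^{*}$ gives the ratio ${\rm cdet}_{i}\left((\lambda\mathbf{I}+\mathbf{A}^{*}\mathbf{A})_{.i}(\mathbf{a}_{.j}^{*})\right)/\det(\lambda\mathbf{I}+\mathbf{A}^{*}\mathbf{A})$ of two polynomials in $\lambda$, and the limit of the ratio exists because the low-order coefficients of the numerator vanish together with those of the denominator. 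Your own description of the numerator (bordered minors already containing $\mathbf{a}_{.j}^{*}$) shows you are implicitly working with $\mathbf{A}^{*}$ attached, so your final multiplication step is both redundant and inconsistent with the preceding paragraph of your proposal. Separately, you never justify why the numerator's coefficients of order $k>r$ vanish: the denominator's vanish because $\mathbf{A}^{*}\mathbf{A}$ has rank $r$, but the bordered matrices $\left((\mathbf{A}^{*}\mathbf{A})_{.i}(\mathbf{a}_{.j}^{*})\right)_{\beta}^{\beta}$ are not Hermitian, so one cannot simply invoke vanishing of minors of a rank-deficient Hermitian matrix. One needs that $\mathbf{a}_{.j}^{*}$ is a right linear combination of the columns of $\mathbf{A}^{*}\mathbf{A}$ (which follows from $\mathbf{A}^{*}=\mathbf{A}^{*}\mathbf{A}\mathbf{A}^{\dagger}$) together with the lemmas of the column-determinant theory on determinants with right-linearly dependent columns; Cauchy--Binet and multiplicativity, which would finish this over $\mathbb{C}$, are unavailable over $\mathbb{H}$, as you yourself note.
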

\begin{rem}\label{rem:unit_repr}Note that   for an arbitrary full-rank matrix, $ {\bf A} \in  {\mathbb{H}}_{r}^{m\times n} $, a column-vector ${\bf d}_{.j}$, and a row-vector ${\bf d}_{i.}$ with appropriate sizes,  we put, respectively,
if $ r=n$, then
  \begin{gather*}{\rm{cdet}} _{i} \left(
{\left( { {\bf A}^{ *} {\bf A}} \right)_{. i}
\left( {\bf d}_{.j}  \right)} \right)=
\sum\limits_{\beta
\in J_{n,n} {\left\{ {i} \right\}}} {{\rm{cdet}} _{i} \left(
{\left( { {\bf A}^{ *}  {\bf A}} \right)_{. i}
\left( {\bf d}_{.j}  \right)} \right)
 _{\beta} ^{\beta} },\\\det\left(  { {\bf A}^{ *}  {\bf A}}
\right)={\sum\limits_{\beta \in
J_{n,n}} {{\left|  { {\bf A}^{ *} {\bf A}}
\right|_{\beta} ^{\beta}  }}}; \end{gather*}
if $r=m$, then
 \begin{gather*} {\rm{rdet}} _{j} \left( {( {\bf A} {\bf A}^{ *}
)_{j.} \left( {\bf d}_{i.}  \right)} \right)=
{\sum\limits_{\alpha \in I_{m,m} {\left\{ {j} \right\}}}
{{\rm{rdet}} _{j} \left( {( {\bf A} {\bf A}^{ *}
)_{j.} \left( {\bf d}_{i.}  \right)} \right)\,_{\alpha}
^{\alpha} } },\\\det\left( { {\bf A} {\bf A}^{ *} } \right)={\sum\limits_{\alpha \in I_{m,m}}  {{
{\left| { {\bf A} {\bf A}^{ *} } \right|
_{\alpha} ^{\alpha} } }}}.
 \end{gather*}

\end{rem}
\begin{cor}\label{cor:det_repr_proj_P}
If $ {\bf A} \in {\mathbb{H}}_{r}^{m\times n} $, then the
projection matrix $ {\bf A}^{\dag}  {\bf A} = : {\bf
P}_A = \left( {p_{ij}} \right)_{n\times n} $ have the
determinantal representation
  \begin{equation}
\label{eq:det_repr_proj_P}
p_{ij} = {\frac{{{\sum\limits_{\beta \in J_{r,n} {\left\{ {i}
\right\}}} {{\rm{cdet}} _{i} \left( {\left( { {\bf A}^{ *}
 {\bf A}} \right)_{\,\textbf{.}\,i} \left({\bf \dot{a}}_{\,\textbf{.}j} \right)}
\right)  _{\beta} ^{\beta} } }}}{{{\sum\limits_{\beta
\in J_{r,n}}  {{ {\left| { {\bf A}^{ *}  {\bf
A}} \right|  _{\beta} ^{\beta} }}}} }}},
 \end{equation}
\noindent where $ {\bf \dot{a}}_{\,\textbf{.}j} $  denotes  the $j$th column of
${ {\bf A}^{ *}  {\bf A}} \in
{\mathbb{H}}^{n\times n}$.
\end{cor}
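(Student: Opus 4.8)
The plan is to read off the entries of $\mathbf{P}_A=\mathbf{A}^{\dag}\mathbf{A}$ directly from the determinantal representation \eqref{eq:det_repr_A*A} of the Moore-Penrose inverse and then to collapse the resulting double sum into a single column determinant by means of linearity. Writing $\mathbf{P}_A=(p_{ij})$, the $(i,j)$ entry is $p_{ij}=\sum_{k=1}^{m}a_{ik}^{\dag}a_{kj}$, where $a_{kj}$ is the $(k,j)$ entry of $\mathbf{A}$. First I would substitute the expression for $a_{ik}^{\dag}$ supplied by \eqref{eq:det_repr_A*A} (with the second index relabelled from $j$ to $k$), which gives
$$p_{ij}=\frac{\sum_{k=1}^{m}\sum_{\beta\in J_{r,n}\{i\}}\mathrm{cdet}_i\left(\left(\mathbf{A}^{*}\mathbf{A}\right)_{.i}\left(\mathbf{a}_{.k}^{*}\right)\right)_{\beta}^{\beta}\,a_{kj}}{\sum_{\beta\in J_{r,n}}\left|\mathbf{A}^{*}\mathbf{A}\right|_{\beta}^{\beta}}.$$
Since $\mathbf{A}^{*}\mathbf{A}$ is Hermitian, all of its principal minors $\left|\mathbf{A}^{*}\mathbf{A}\right|_{\beta}^{\beta}$ are real, so the denominator is a real scalar that may be moved freely past the quaternion factors and the two summations interchanged.

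The key step is to observe that, for each fixed $\beta\in J_{r,n}\{i\}$, the quantity $\mathrm{cdet}_i\left(\left(\mathbf{A}^{*}\mathbf{A}\right)_{.i}(\mathbf{b})\right)_{\beta}^{\beta}$ is a right-linear function of the inserted column $\mathbf{b}$. This follows straight from the definition of the column determinant: in $\mathrm{cdet}_i$ the entries lying in column $i$ occur exactly once in every monomial and always as the rightmost factor (the factor terminating the cycle that contains the index $i$). Consequently, for quaternion scalars $c_k$ written on the right, $\mathrm{cdet}_i\left(\left(\mathbf{A}^{*}\mathbf{A}\right)_{.i}\!\left(\sum_k\mathbf{b}_k c_k\right)\right)_{\beta}^{\beta}=\sum_k\mathrm{cdet}_i\left(\left(\mathbf{A}^{*}\mathbf{A}\right)_{.i}(\mathbf{b}_k)\right)_{\beta}^{\beta}\,c_k$. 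Applying this identity with $\mathbf{b}_k=\mathbf{a}_{.k}^{*}$ and $c_k=a_{kj}$ lets me pull the sum over $k$ inside each column determinant.

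It then remains to identify the inserted column $\sum_{k=1}^{m}\mathbf{a}_{.k}^{*}a_{kj}$. Its $l$th component equals $\sum_{k=1}^{m}\overline{a_{kl}}\,a_{kj}$, which is precisely the $(l,j)$ entry of $\mathbf{A}^{*}\mathbf{A}$; hence this column is exactly $\dot{\mathbf{a}}_{.j}$, the $j$th column of $\mathbf{A}^{*}\mathbf{A}$. Substituting back yields the claimed formula \eqref{eq:det_repr_proj_P}. I expect the only genuinely delicate point to be the right-linearity argument, where the noncommutativity of $\mathbb{H}$ forces careful bookkeeping of the side on which each $a_{kj}$ sits and requires that the reduction be carried out separately for each fixed $\beta$; once that is granted, everything else is a routine rearrangement legitimised by the reality of the denominator.
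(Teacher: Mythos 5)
Your proof is correct and follows exactly the derivation the paper intends (the corollary is stated as an immediate consequence of Theorem \ref{theor:det_repr_MP}): writing $p_{ij}=\sum_{k}a^{\dag}_{ik}a_{kj}$, substituting (\ref{eq:det_repr_A*A}), and using the right-linearity of ${\rm cdet}_{i}$ in its $i$th column --- valid since the column-$i$ entry is always the rightmost factor of each monomial --- to identify $\sum_{k}{\bf a}^{*}_{.k}a_{kj}$ with the $j$th column $\dot{{\bf a}}_{.j}$ of ${\bf A}^{*}{\bf A}$. The handling of the noncommutativity (real denominator, scalars kept on the right) is exactly the bookkeeping required.
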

\begin{cor}\label{cor:det_repr_proj_Q}
If $ {\bf A} \in  {\mathbb{H}}_{r}^{m\times n}$, then the
projection matrix $ {\bf A} {\bf A}^{\dag} = : {\bf
Q}_A = \left( {q_{ij}} \right)_{m\times m} $ have the
determinantal representation
  \begin{equation}
\label{eq:det_repr_proj_Q}
q_{ij} = {\frac{{{\sum\limits_{\alpha \in I_{r,m} {\left\{ {j}
\right\}}} {{{\rm{rdet}} _{j} {\left( {( {\bf A} {\bf A}^{ *}
)_{j \textbf{.}}\, ({\bf \ddot{a}}  _{i  \textbf{.}} )}
\right)  _{\alpha} ^{\alpha} } }}}
}}{{{\sum\limits_{\alpha \in I_{r,m}} {{{\left| {
{\bf A} {\bf A}^{ *} } \right| _{\alpha
}^{\alpha} }  }}} }}},
 \end{equation}
\noindent where ${\bf \ddot{a}} _{i\textbf{.}} $ denotes the $i$th row of $
{\bf A} {\bf A}^{*}\in {\mathbb{H}}^{m\times m}$.
\end{cor}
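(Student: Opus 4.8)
The plan is to reduce the statement to the determinantal representation (\ref{eq:det_repr_AA*}) of the Moore-Penrose inverse already established in Theorem \ref{theor:det_repr_MP}, exploiting the left-linearity of the row determinant in its replaced row. Writing $ {\bf A} = (a_{ij})$ and $ {\bf A}^{\dag} = (a^{\dag}_{ij})$, and recalling that $ {\bf Q}_A = {\bf A}{\bf A}^{\dag}$ with $ {\bf A}^{\dag} \in {\mathbb{H}}^{n\times m}$, I would begin from the entrywise product
\begin{equation*}
q_{ij} = \sum_{s=1}^{n} a_{is}\,a^{\dag}_{sj},
\end{equation*}
and substitute for each $a^{\dag}_{sj}$ the representation (\ref{eq:det_repr_AA*}). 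The denominator $ D := \sum_{\alpha \in I_{r,m}} |{\bf A}{\bf A}^{*}|_{\alpha}^{\alpha}$ is a sum of principal minors of the Hermitian matrix $ {\bf A}{\bf A}^{*}$, hence a real (therefore central) scalar independent of $s$; pulling $D^{-1}$ out of the sum, the task reduces to evaluating the numerator $ \sum_{s} a_{is}\sum_{\alpha \in I_{r,m}\{j\}} {\rm{rdet}}_{j}\bigl(({\bf A}{\bf A}^{*})_{j\,\textbf{.}}({\bf a}^{*}_{s\,\textbf{.}})\bigr)_{\alpha}^{\alpha}$.

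The crux is the following observation about the row determinant. For each $\alpha$ with $j \in \alpha$, every monomial occurring in $ {\rm{rdet}}_{j}$ of the principal submatrix begins, on the left, with the single factor drawn from the replaced $j$th row (this is visible in the defining expansion: the cycle containing $j$ is written with $j$ leading, so the leftmost factor of each product carries the row index $j$). Consequently $ {\rm{rdet}}_{j}\bigl(({\bf A}{\bf A}^{*})_{j\,\textbf{.}}({\bf b})\bigr)_{\alpha}^{\alpha}$ is left-linear in the replaced row $ {\bf b}$ — a basic property from the theory of \cite{kyr2,kyr3}. Interchanging the two finite summations over $s$ and $\alpha$, and applying this left-linearity with the quaternion coefficients $a_{is}$ placed on the left, I move the sum over $s$ inside the row determinant, so that the $j$th row is replaced by $ \sum_{s=1}^{n} a_{is}\,{\bf a}^{*}_{s\,\textbf{.}}$.

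It remains to identify this combined row. Since $ {\bf a}^{*}_{s\,\textbf{.}}$ is the $s$th row of $ {\bf A}^{*}$, its $t$th entry is $ \overline{a_{ts}}$, whence the $t$th entry of $ \sum_{s} a_{is}\,{\bf a}^{*}_{s\,\textbf{.}}$ equals $ \sum_{s} a_{is}\overline{a_{ts}} = ({\bf A}{\bf A}^{*})_{it}$; that is, $ \sum_{s} a_{is}\,{\bf a}^{*}_{s\,\textbf{.}} = {\bf \ddot{a}}_{i\,\textbf{.}}$, the $i$th row of $ {\bf A}{\bf A}^{*}$. Substituting this into the numerator produces exactly $ \sum_{\alpha \in I_{r,m}\{j\}} {\rm{rdet}}_{j}\bigl(({\bf A}{\bf A}^{*})_{j\,\textbf{.}}({\bf \ddot{a}}_{i\,\textbf{.}})\bigr)_{\alpha}^{\alpha}$, and dividing by $D$ yields (\ref{eq:det_repr_proj_Q}).

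The main obstacle is purely the non-commutative bookkeeping: one must be certain that the scalars $a_{is}$ act on the left and that the row determinant is genuinely left-linear in the replaced row. This asymmetry between $ {\rm{rdet}}$ and $ {\rm{cdet}}$ is precisely why the $ {\bf Q}_A$ formula is built from row determinants with left multiplication, whereas the $ {\bf P}_A$ formula of Corollary \ref{cor:det_repr_proj_P} uses column determinants with right multiplication (there one expands $ p_{ij} = \sum_{s} a^{\dag}_{is} a_{sj}$ and collects $ \sum_{s} {\bf a}^{*}_{\,\textbf{.}\,s}\,a_{sj} = {\bf \dot{a}}_{\,\textbf{.}\,j}$ on the right). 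One should also check that restricting to the principal submatrix indexed by $ \alpha \ni j$ preserves the leading-factor structure, so that left-linearity survives the passage to $ (\,\cdot\,)_{\alpha}^{\alpha}$; both points follow at once from the defining expansion of the row determinant, so no further estimates are needed.
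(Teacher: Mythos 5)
Your proof is correct and is exactly the computation that the paper leaves implicit when stating Corollary \ref{cor:det_repr_proj_Q} as a consequence of Theorem \ref{theor:det_repr_MP}: expand $q_{ij}=\sum_{s}a_{is}a^{\dag}_{sj}$, substitute the representation (\ref{eq:det_repr_AA*}), pull out the real (central) denominator, and use the left-linearity of ${\rm{rdet}}_{j}$ in its $j$th row to collapse $\sum_{s}a_{is}{\bf a}^{*}_{s\,\textbf{.}}$ into ${\bf \ddot{a}}_{i\,\textbf{.}}$. Your attention to the noncommutative details (scalars acting on the left, and linearity surviving restriction to the principal submatrices $\alpha \ni j$) matches the intended derivation, so there is nothing to add.
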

The following important orthogonal projectors ${\bf L}_{A}:={\bf I}-{\bf A}^{\dag}{\bf A}$ and ${\bf R}_{A}:={\bf I}-{\bf A}{\bf A}^{\dag}$ induced from ${\bf A}$ will be used below.
\begin{thm}\cite{wang04} \label{theor:LS_AXB}
Let ${\bf A} \in {\mathbb{H}}^{m\times n}$, ${\bf B}\in  {\mathbb{H}}^{r\times s}$, ${\bf C} \in {\mathbb{H}}^{m\times s}$ be known and ${\bf X} \in {\mathbb{H}}^{n\times r}$ be unknown. Then
the matrix equation \begin{equation}\label{eq:AXB} {\bf A}{\bf X} {\bf B} =  {\bf
C}\end{equation} is consistent if and only if $ {\bf A}{\bf A}^{\dag}{\bf C}{\bf B}^{\dag}{\bf B}={\bf C}$.
 In this case, its general
solution can be expressed as
\begin{equation}\label{eq:sol_AXB} {\bf X} =  {\bf A}^{\dag}{\bf
C} {\bf B}^{\dag} +  {\bf L}_{A} {\bf V}+ {\bf W} {\bf R}_{B},\end{equation}
where $ {\bf V}, {\bf W}$    are arbitrary matrices over ${\mathbb{H}}$ with appropriate dimensions.
\end{thm}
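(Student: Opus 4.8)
The plan is to split the statement into the consistency criterion and the description of the solution set, and to prove everything purely formally from the four defining identities of the Moore-Penrose inverse, never commuting any factors.

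First I would establish the consistency condition. For necessity, if ${\bf X}$ solves \eqref{eq:AXB}, then substituting ${\bf C}={\bf A}{\bf X}{\bf B}$ and using ${\bf A}{\bf A}^{\dag}{\bf A}={\bf A}$ together with ${\bf B}{\bf B}^{\dag}{\bf B}={\bf B}$ (Penrose identity~1 applied to ${\bf A}$ and to ${\bf B}$) gives ${\bf A}{\bf A}^{\dag}{\bf C}{\bf B}^{\dag}{\bf B}={\bf A}{\bf A}^{\dag}{\bf A}{\bf X}{\bf B}{\bf B}^{\dag}{\bf B}={\bf A}{\bf X}{\bf B}={\bf C}$. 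For sufficiency, the hypothesis ${\bf A}{\bf A}^{\dag}{\bf C}{\bf B}^{\dag}{\bf B}={\bf C}$ says precisely that the candidate ${\bf X}_{0}:={\bf A}^{\dag}{\bf C}{\bf B}^{\dag}$ satisfies ${\bf A}{\bf X}_{0}{\bf B}={\bf A}{\bf A}^{\dag}{\bf C}{\bf B}^{\dag}{\bf B}={\bf C}$, so the equation is consistent.

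Next I would verify that every matrix of the form \eqref{eq:sol_AXB} is a solution. Using ${\bf A}{\bf L}_{A}={\bf A}-{\bf A}{\bf A}^{\dag}{\bf A}={\bf 0}$ and ${\bf R}_{B}{\bf B}={\bf B}-{\bf B}{\bf B}^{\dag}{\bf B}={\bf 0}$, the two free terms are annihilated on the left by ${\bf A}$ and on the right by ${\bf B}$, respectively, leaving ${\bf A}{\bf X}{\bf B}={\bf A}{\bf A}^{\dag}{\bf C}{\bf B}^{\dag}{\bf B}={\bf C}$ by the consistency condition, for arbitrary ${\bf V},{\bf W}$.

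The harder (and main) step is completeness: showing that every solution ${\bf X}$ has the stated form. I would set ${\bf Y}:={\bf X}-{\bf A}^{\dag}{\bf C}{\bf B}^{\dag}$, so that ${\bf A}{\bf Y}{\bf B}={\bf 0}$, and then resolve the identities ${\bf I}={\bf A}^{\dag}{\bf A}+{\bf L}_{A}$ on the left and ${\bf I}={\bf B}{\bf B}^{\dag}+{\bf R}_{B}$ on the right. Writing ${\bf Y}=({\bf A}^{\dag}{\bf A}+{\bf L}_{A}){\bf Y}({\bf B}{\bf B}^{\dag}+{\bf R}_{B})$ and expanding, the pure term ${\bf A}^{\dag}{\bf A}{\bf Y}{\bf B}{\bf B}^{\dag}={\bf A}^{\dag}({\bf A}{\bf Y}{\bf B}){\bf B}^{\dag}$ vanishes; regrouping the remaining three terms and using ${\bf B}{\bf B}^{\dag}+{\bf R}_{B}={\bf I}$ yields ${\bf Y}={\bf L}_{A}{\bf Y}+({\bf A}^{\dag}{\bf A}{\bf Y}){\bf R}_{B}$. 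Choosing ${\bf V}={\bf Y}$ and ${\bf W}={\bf A}^{\dag}{\bf A}{\bf Y}$ then exhibits ${\bf Y}$, and hence ${\bf X}={\bf A}^{\dag}{\bf C}{\bf B}^{\dag}+{\bf Y}$, in the form \eqref{eq:sol_AXB}. The only genuine point of care throughout is the noncommutativity of $\mathbb{H}$: every manipulation must use associativity and the Penrose identities alone, keeping the left/right position of each factor fixed, which is exactly why the idempotents ${\bf A}^{\dag}{\bf A}$ and ${\bf L}_{A}$ act on the left while ${\bf B}{\bf B}^{\dag}$ and ${\bf R}_{B}$ act on the right.
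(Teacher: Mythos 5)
Your proof is correct and complete. Note that the paper does not prove this theorem at all---it is quoted from the cited reference \cite{wang04}---so there is no internal proof to compare against; your argument (necessity and sufficiency of the consistency condition via the Penrose identity ${\bf A}{\bf A}^{\dag}{\bf A}={\bf A}$, verification that every matrix of the form \eqref{eq:sol_AXB} solves the equation, and completeness by setting ${\bf Y}={\bf X}-{\bf A}^{\dag}{\bf C}{\bf B}^{\dag}$ and resolving ${\bf I}={\bf A}^{\dag}{\bf A}+{\bf L}_{A}$ on the left and ${\bf I}={\bf B}{\bf B}^{\dag}+{\bf R}_{B}$ on the right) is the standard one, and it is genuinely valid over $\mathbb{H}$ since every step uses only associativity and the defining identities of the Moore--Penrose inverse, never commutativity.
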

\begin{thm}\cite{kyr5}\label{theor:AXB=D}
Let ${\bf A} \in {\mathbb{H}}_{r_{1}}^{m\times n}$, ${\bf B} \in {\mathbb{H}}_{r_{2}}^{r\times s}$. Then the partial solution $ {\bf X}^{0}= {\bf A}^{\dag} {\bf C} {\bf B}^{\dag}=(x^{0}_{ij})\in
{\mathbb{H}}^{n\times r}$ to (\ref{eq:AXB} )  have determinantal representations,
\begin{equation*}
x^0_{ij} = {\frac{{{\sum\limits_{\beta \in J_{r_{1},n} {\left\{
{i} \right\}}} {{\rm{cdet}} _{i} \left( {\left( { {\bf A}^{
*} {\bf A}} \right)_{.i} \left( {{{\bf
d}}_{.j}^{ B}} \right)} \right) _{\beta} ^{\beta}
} } }}{{{\sum\limits_{\beta \in J_{r_{1},n}} {{\left| {\bf A}^{ *}  {\bf A}
\right|_{\beta} ^{\beta}}} \sum\limits_{\alpha \in I_{r_{2},r}}{{\left| {\bf B} {\bf B}^{ *}
\right|_{\alpha} ^{\alpha}}}} }}},
\end{equation*}
or
\begin{equation*}
 x^0_{ij}={\frac{{{\sum\limits_{\alpha
\in I_{r_{2},r} {\left\{ {j} \right\}}} {{\rm{rdet}} _{j} \left(
{\left( {{\bf B}{\bf B}^{ *} } \right)_{j.} \left(
{{ {\bf d}}_{i.}^{ A}} \right)}
\right)_{\alpha} ^{\alpha} } }}}{{{\sum\limits_{\beta \in
J_{r_{1},n}} {{\left|  {\bf A}^{ *}  {\bf A}   \right|_{\beta} ^{\beta}}}\sum\limits_{\alpha \in
I_{r_{2},r}} {{\left|  {\bf B}{\bf B}^{ *}  \right| _{\alpha} ^{\alpha}}}} }}},
\end{equation*}
where
\begin{gather*}
   {{{\bf d}}_{.j}^{  B}}=\left[
\sum\limits_{\alpha \in I_{r_{2},r} {\left\{ {j} \right\}}}
{{\rm{rdet}} _{j} \left( {\left( { {\bf B}{\bf B}^{ *}
} \right)_{j.} \left( {\tilde{ {\bf c}}_{k.}} \right)}
\right)_{\alpha} ^{\alpha}}
\right]\in {\mathbb{H}}^{n \times
1},\,\,\,\,k=1,\ldots,n, \\  {{ {\bf d}}_{i.}^{ A}}=\left[
\sum\limits_{\beta \in J_{r_{1},n} {\left\{ {i} \right\}}}
{{\rm{cdet}} _{i} \left( {\left(  {\bf A}^{ *} {\bf A}
 \right)_{.i} \left( {\tilde{ {\bf C}}_{.l}} \right)}
\right)_{\beta} ^{\beta}} \right]\in {\mathbb{H}}^{1 \times
r},\,\,\,\,l=1,\ldots,r,
\end{gather*}
 are the column vector and the row vector, respectively.  ${\tilde{ {\bf
 c}}_{i.}}$ and
${\tilde{ {\bf c}}_{.j}}$ are the $i$th row   and the $j$th
column  of $ {\bf \widetilde{C}}=  {\bf
A}^\ast{\bf C}{\bf B}^\ast$.
\end{thm}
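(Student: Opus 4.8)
\emph{Proof proposal.} The plan is to start from the defining product $\mathbf{X}^{0}=\mathbf{A}^{\dagger}\mathbf{C}\mathbf{B}^{\dagger}$, written entrywise as $x^{0}_{ij}=\sum_{k=1}^{m}\sum_{l=1}^{s}a^{\dagger}_{ik}c_{kl}b^{\dagger}_{lj}$, and to substitute the two determinantal representations of the Moore--Penrose inverse from Theorem~\ref{theor:det_repr_MP}: representation~(\ref{eq:det_repr_A*A}), through $\mathbf{A}^{*}\mathbf{A}$, for the factor $a^{\dagger}_{ik}$, and representation~(\ref{eq:det_repr_AA*}), through $\mathbf{B}\mathbf{B}^{*}$, for the factor $b^{\dagger}_{lj}$. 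The two asserted formulas correspond to the two orders of carrying out the contraction: grouping $\mathbf{C}\mathbf{B}^{\dagger}$ first yields the first formula, grouping $\mathbf{A}^{\dagger}\mathbf{C}$ first yields the second.

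For the first formula I would begin by identifying the auxiliary column $\mathbf{d}^{B}_{.j}$. Writing the $k$th row of $\widetilde{\mathbf{C}}=\mathbf{A}^{*}\mathbf{C}\mathbf{B}^{*}$ as $\tilde{\mathbf{c}}_{k.}=\sum_{p}(\mathbf{A}^{*}\mathbf{C})_{kp}\,\mathbf{b}^{*}_{p.}$ and invoking the left-linearity of the row determinant $\mathrm{rdet}_{j}$ in its replaced $j$th row, I can pull the coefficients $(\mathbf{A}^{*}\mathbf{C})_{kp}$ out on the left, so that by~(\ref{eq:det_repr_AA*}) the $k$th entry of $\mathbf{d}^{B}_{.j}$ equals $\bigl(\sum_{\alpha\in I_{r_{2},r}}|\mathbf{B}\mathbf{B}^{*}|^{\alpha}_{\alpha}\bigr)(\mathbf{A}^{*}\mathbf{C}\mathbf{B}^{\dagger})_{kj}$; that is, $\mathbf{d}^{B}_{.j}$ is the $j$th column of $\mathbf{A}^{*}\mathbf{C}\mathbf{B}^{\dagger}$ up to the real scalar $\sum_{\alpha}|\mathbf{B}\mathbf{B}^{*}|^{\alpha}_{\alpha}$. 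Substituting this into the claimed numerator and using the right-linearity of the column determinant $\mathrm{cdet}_{i}$ in its replaced $i$th column lets that real scalar factor out and cancel against the identical factor in the denominator; what remains is exactly the vector form of~(\ref{eq:det_repr_A*A}) applied to the column $(\mathbf{C}\mathbf{B}^{\dagger})_{.j}$, namely $(\mathbf{A}^{\dagger}(\mathbf{C}\mathbf{B}^{\dagger}))_{ij}=x^{0}_{ij}$. The second formula follows by the mirror-image argument: one shows $\mathbf{d}^{A}_{i.}=\bigl(\sum_{\beta}|\mathbf{A}^{*}\mathbf{A}|^{\beta}_{\beta}\bigr)(\mathbf{A}^{\dagger}\mathbf{C}\mathbf{B}^{*})_{i.}$ via the right-linearity of $\mathrm{cdet}_{i}$ together with~(\ref{eq:det_repr_A*A}), and then applies~(\ref{eq:det_repr_AA*}) in row-vector form to $(\mathbf{A}^{\dagger}\mathbf{C})_{i.}$.

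Two structural facts make these cancellations go through cleanly and are worth isolating first. First, since $\mathbf{A}^{*}\mathbf{A}$ and $\mathbf{B}\mathbf{B}^{*}$ are Hermitian, all principal minors $|\mathbf{A}^{*}\mathbf{A}|^{\beta}_{\beta}$ and $|\mathbf{B}\mathbf{B}^{*}|^{\alpha}_{\alpha}$ are real, hence the normalizing sums are central and may be moved past quaternion entries and cancelled with no sign or ordering issues. Second, I will use the vector upgrade of Theorem~\ref{theor:det_repr_MP}: the entrywise representation of $a^{\dagger}_{ik}$ (resp.\ $b^{\dagger}_{lj}$) extends, by the same one-sided linearity of $\mathrm{cdet}_{i}$ (resp.\ $\mathrm{rdet}_{j}$), to a representation of the product $(\mathbf{A}^{\dagger}\mathbf{w})_{i}$ of $\mathbf{A}^{\dagger}$ with an arbitrary column $\mathbf{w}$ (resp.\ $(\mathbf{u}\mathbf{B}^{\dagger})_{j}$) in which $\mathbf{A}^{*}\mathbf{w}$ (resp.\ $\mathbf{u}\mathbf{B}^{*}$) replaces the relevant column (resp.\ row).

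The main obstacle I anticipate is purely the bookkeeping forced by noncommutativity: I must keep the scalar coefficients $(\mathbf{A}^{*}\mathbf{C})_{kp}$ and $c_{kl}$ on the correct side when they are pulled through the column- and row-determinants, and verify that the one-sided linearity I invoke is exactly the linearity these determinants possess --- $\mathrm{rdet}_{j}$ being left-linear in the $j$th row because every term of its expansion begins with a factor drawn from that row, and $\mathrm{cdet}_{i}$ being right-linear in the $i$th column because every term of its expansion ends with a factor drawn from that column. Once these side conventions are pinned down, the remainder is a direct substitution-and-cancellation with no further analytic content.
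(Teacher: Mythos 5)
Your proposal is correct, and it is essentially the standard argument: the paper itself states this theorem without proof (it is imported from \cite{kyr5}), and the proof in that source is exactly your substitution-and-regrouping computation — insert the entrywise representations (\ref{eq:det_repr_A*A}) for ${\bf A}^{\dag}$ and (\ref{eq:det_repr_AA*}) for ${\bf B}^{\dag}$ into $x^{0}_{ij}=\sum_{k,l}a^{\dag}_{ik}c_{kl}b^{\dag}_{lj}$, then use the left-linearity of ${\rm rdet}_{j}$ in its $j$th row and the right-linearity of ${\rm cdet}_{i}$ in its $i$th column to fold the sums into the auxiliary vectors ${\bf d}^{B}_{.j}$ and ${\bf d}^{A}_{i.}$, with the real (hence central) sums of principal minors of the Hermitian matrices ${\bf A}^{*}{\bf A}$ and ${\bf B}{\bf B}^{*}$ cancelling against the denominator. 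The two structural facts you isolate — centrality of those minor sums and the one-sided linearity of the row and column determinants — are precisely what makes the noncommutative bookkeeping work, so nothing is missing.
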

\begin{cor}\label{cor:sol_AX} Let ${\bf A} \in {\mathbb{H}}_{k}^{m\times n}$, ${\bf C} \in {\mathbb{H}}^{m\times s}$ be known and ${\bf X} \in {\mathbb{H}}^{n\times s}$ be unknown. Then the matrix equation $ {\bf A}{\bf X}  =  {\bf
C}$ is consistent if and only if $ {\bf A}{\bf A}^{\dag}{\bf C}={\bf C}$.
 In this case, its general
solution can be expressed as
$ {\bf X} =  {\bf A}^{\dag}{\bf
C} +  {\bf L}_{A} {\bf V}$,
where $ {\bf V}$    is an arbitrary matrix over ${\mathbb{H}}$ with appropriate dimensions. The partial solution $ {\bf X}^0={\bf A}^{\dag}{\bf C}$ has the following determinantal representation,
\begin{equation*}
 x^0_{ij} = {\frac{{{\sum\limits_{\beta \in
J_{k,n} {\left\{ {i} \right\}}} {{\rm{cdet}} _{i} \left( {\left(
{ {\bf A}^{ *}  {\bf A}} \right)_{.i} \left(
{\hat{{\bf c}}_{.j}} \right)} \right)
_{\beta} ^{\beta} } } }}{{{\sum\limits_{\beta \in J_{k,n}}
{{\left| { {\bf A}^{ *}  {\bf A}}   \right|_{\beta} ^{\beta}}}} }}}.
\end{equation*}
\noindent where $\hat{ {\bf c}}_{.j}$ is the $j$th column of
$\hat{ {\bf C}}={\bf A}^{ \ast}{\bf C}$.
\end{cor}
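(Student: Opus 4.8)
The plan is to deduce the statement as the one-sided specialization ${\bf B}={\bf I}_{s}$ of the two-sided results already in hand, treating the solvability/general-solution claim and the determinantal formula separately. For the consistency criterion and the parametrized general solution I would invoke Theorem~\ref{theor:LS_AXB} with ${\bf B}={\bf I}_{s}$: then ${\bf B}^{\dag}={\bf I}_{s}$ and ${\bf R}_{B}={\bf I}-{\bf B}{\bf B}^{\dag}={\bf 0}$, so the condition ${\bf A}{\bf A}^{\dag}{\bf C}{\bf B}^{\dag}{\bf B}={\bf C}$ collapses to ${\bf A}{\bf A}^{\dag}{\bf C}={\bf C}$, and the general solution \eqref{eq:sol_AXB} loses its ${\bf W}{\bf R}_{B}$ term, leaving ${\bf X}={\bf A}^{\dag}{\bf C}+{\bf L}_{A}{\bf V}$. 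This part is immediate.

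For the determinantal representation of ${\bf X}^{0}={\bf A}^{\dag}{\bf C}$, I would specialize the first formula of Theorem~\ref{theor:AXB=D} to ${\bf B}={\bf I}_{s}$, so that $r=s$, $r_{2}=s$, and ${\bf B}{\bf B}^{*}={\bf I}_{s}$. Since ${\bf I}_{s}$ is full rank, Remark~\ref{rem:unit_repr} gives $\sum_{\alpha\in I_{s,s}}|{\bf B}{\bf B}^{*}|_{\alpha}^{\alpha}=\det{\bf I}_{s}=1$, so the second factor of the denominator disappears. The substantive step is to show that the auxiliary column ${\bf d}_{.j}^{B}$ reduces to $\hat{\bf c}_{.j}$, the $j$th column of $\hat{\bf C}={\bf A}^{*}{\bf C}$. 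To see this I would note that with ${\bf B}={\bf I}_{s}$ the matrix $\widetilde{\bf C}={\bf A}^{*}{\bf C}{\bf B}^{*}$ equals $\hat{\bf C}$, and that the only index set in $I_{s,s}\{j\}$ is $\{1,\dots,s\}$; hence each entry of ${\bf d}_{.j}^{B}$ is the row determinant of ${\bf I}_{s}$ with its $j$th row replaced by the $k$th row of $\hat{\bf C}$. Because the off-diagonal entries of ${\bf I}_{s}$ vanish, any permutation routing column $j$ through a row $p\neq j$ would require the zero entry $({\bf I}_{s})_{pj}$, so only the term selecting the $(j,j)$-replaced entry survives; the row determinant therefore equals $\hat{c}_{kj}$, giving ${\bf d}_{.j}^{B}=\hat{\bf c}_{.j}$ and the claimed numerator $\sum_{\beta\in J_{k,n}\{i\}}{\rm cdet}_{i}(({\bf A}^{*}{\bf A})_{.i}(\hat{\bf c}_{.j}))_{\beta}^{\beta}$.

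I expect the evaluation of this degenerate row determinant to be the only step requiring genuine care, precisely because in the quaternion setting one must check that the scalar entries of $\hat{\bf C}$ are attached on the correct side and that no ordering artifact arises from the noncommutative product defining ${\rm rdet}$. As an alternative route bypassing Theorem~\ref{theor:AXB=D}, I would instead start from $x_{ij}^{0}=\sum_{t}a_{it}^{\dag}c_{tj}$, insert representation \eqref{eq:det_repr_A*A} for $a_{it}^{\dag}$, pull out the common real denominator $\sum_{\beta\in J_{k,n}}|{\bf A}^{*}{\bf A}|_{\beta}^{\beta}$, and merge the sum over $t$ using the additivity and right-homogeneity of ${\rm cdet}_{i}$ in its replaced $i$th column together with $\sum_{t}{\bf a}_{.t}^{*}c_{tj}=({\bf A}^{*}{\bf C})_{.j}=\hat{\bf c}_{.j}$; here right-homogeneity is exactly what makes the noncommutative bookkeeping close, since the replaced-column entry occupies the rightmost position in every monomial of the column determinant, so the scalar $c_{tj}$ factors out on the right as required.
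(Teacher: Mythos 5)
Your proposal is correct and follows exactly the route the paper intends: the corollary is stated there without proof, as the immediate specialization ${\bf B}={\bf I}_{s}$ of Theorem~\ref{theor:LS_AXB} (consistency and general solution) and of Theorem~\ref{theor:AXB=D} (determinantal formula), which is precisely what you carry out, including the correct degenerate evaluation ${\bf d}_{.j}^{B}=\hat{\bf c}_{.j}$ and the collapse of the ${\bf B}$-factor in the denominator to $1$. Your alternative argument via right-linearity of ${\rm cdet}_{i}$ in the replaced $i$th column is also sound and gives a self-contained verification, but it is not needed beyond the specialization.
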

\begin{cor} \label{cor:sol_XB}
 Let ${\bf B}\in  {\mathbb{H}}_k^{r\times s}$, ${\bf C}\in  {\mathbb{H}}^{n\times s}$ be given, and ${\bf X}\in  {\mathbb{H}}^{n\times r}$ be unknown.
 Then
the equation ${\bf X}{\bf B}= {\bf C}$
is solvable if and only if ${\bf C} ={\bf C}{\bf B}^{\dag}{\bf B}$ and its general solution is
$ {\bf X} = {\bf C}{\bf B}^{\dag} + {\bf W}{\bf R}_{B}$,
where ${\bf W}$ is a any matrix with conformable dimension.
Moreover, its partial solution  $
{\bf X}={\bf C} {\bf B}^{\dag}$
 has the  determinantal representation,
\begin{equation*}
 x_{ij} = {\frac{{{\sum\limits_{\alpha \in I_{k,r}
{\left\{ {j} \right\}}} {{\rm{rdet}} _{j} \left( {\left( {{\bf B}{\bf B}^{ *} } \right)_{j.} \left( {\hat{
{\bf c}}_{i.}} \right)} \right)_{\alpha} ^{\alpha} }
}}}{{{\sum\limits_{\alpha \in I_{k,r}}  {{ {\left| {{\bf B}{\bf B}^{ *} } \right| _{\alpha}
^{\alpha} } }}} }}}.
\end{equation*}
\noindent where ${\hat{{\bf
c}}_{i.}} $ is the $i$th row of
$\hat{ {\bf C}}={\bf C} {\bf B}^{
\ast}$.
\end{cor}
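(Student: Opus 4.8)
The plan is to regard $\mathbf{X}\mathbf{B}=\mathbf{C}$ as the special case $\mathbf{A}=\mathbf{I}_n$ of the two-sided equation $\mathbf{A}\mathbf{X}\mathbf{B}=\mathbf{C}$ and to specialize Theorems \ref{theor:LS_AXB} and \ref{theor:AXB=D}, exactly mirroring Corollary \ref{cor:sol_AX}. First I would establish solvability and the general solution by putting $\mathbf{A}=\mathbf{I}_n$ in Theorem \ref{theor:LS_AXB}. Since $\mathbf{I}_n^{\dag}=\mathbf{I}_n$, the consistency condition $\mathbf{A}\mathbf{A}^{\dag}\mathbf{C}\mathbf{B}^{\dag}\mathbf{B}=\mathbf{C}$ collapses to $\mathbf{C}\mathbf{B}^{\dag}\mathbf{B}=\mathbf{C}$, i.e.\ $\mathbf{C}=\mathbf{C}\mathbf{B}^{\dag}\mathbf{B}$, while the orthogonal projector $\mathbf{L}_{\mathbf{I}_n}=\mathbf{I}_n-\mathbf{I}_n^{\dag}\mathbf{I}_n=\mathbf{0}$ annihilates the free term $\mathbf{L}_A\mathbf{V}$ in $\mathbf{X}=\mathbf{A}^{\dag}\mathbf{C}\mathbf{B}^{\dag}+\mathbf{L}_A\mathbf{V}+\mathbf{W}\mathbf{R}_B$. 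This leaves the general solution $\mathbf{X}=\mathbf{C}\mathbf{B}^{\dag}+\mathbf{W}\mathbf{R}_B$ and, at $\mathbf{W}=\mathbf{0}$, the partial solution $\mathbf{X}^{0}=\mathbf{C}\mathbf{B}^{\dag}$.

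For the determinantal representation I would specialize the second formula of Theorem \ref{theor:AXB=D} to $\mathbf{A}=\mathbf{I}_n$, so that $r_1=n$ and $r_2=k$. Then $\mathbf{A}^{*}\mathbf{A}=\mathbf{I}_n$, the index family $J_{n,n}$ reduces to the single full set, $\sum_{\beta\in J_{n,n}}|\mathbf{A}^{*}\mathbf{A}|_{\beta}^{\beta}=\det\mathbf{I}_n=1$, and $\widetilde{\mathbf{C}}=\mathbf{A}^{*}\mathbf{C}\mathbf{B}^{*}=\mathbf{C}\mathbf{B}^{*}=\hat{\mathbf{C}}$. The key reduction is that the auxiliary row vector $\mathbf{d}_{i.}^{A}$ collapses to $\hat{\mathbf{c}}_{i.}$: each of its entries equals ${\rm cdet}_i((\mathbf{I}_n)_{.i}(\hat{\mathbf{c}}_{.l}))$, and replacing the $i$th column of the identity by a vector and taking the $i$th column determinant returns exactly the $i$th entry of that vector, namely $\hat{c}_{il}$. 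Substituting $\mathbf{d}_{i.}^{A}=\hat{\mathbf{c}}_{i.}$ together with the denominator factor $1$ into the formula of Theorem \ref{theor:AXB=D} gives precisely the asserted expression for $x_{ij}$, with $\hat{\mathbf{c}}_{i.}$ the $i$th row of $\hat{\mathbf{C}}=\mathbf{C}\mathbf{B}^{*}$.

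Since the argument is a direct specialization, the only step needing genuine care is the elementary identity ${\rm cdet}_i((\mathbf{I}_n)_{.i}(\mathbf{v}))=v_i$, which I would verify straight from the definition of the column determinant: for the matrix agreeing with $\mathbf{I}_n$ off its $i$th column, the only permutations $\tau$ yielding a nonzero product are those whose cycle through $i$ contributes the single factor $v_i$ while all remaining factors are unit diagonal entries. A fully equivalent alternative, avoiding Theorem \ref{theor:AXB=D}, is to expand $x_{ij}=\sum_{l}c_{il}\,b^{\dag}_{lj}$, insert the representation \eqref{eq:det_repr_AA*} of $b^{\dag}_{lj}$, and use that ${\rm rdet}_j((\mathbf{B}\mathbf{B}^{*})_{j.}(\cdot))$ is left $\mathbb{H}$-linear in the replaced $j$th row --- the replaced entries occurring only as the leftmost factor of each term of ${\rm rdet}_j$ --- so that $\sum_l c_{il}\mathbf{b}^{*}_{l.}=\hat{\mathbf{c}}_{i.}$ can be absorbed into a single row determinant; this reproduces the same formula and is the exact dual of Corollary \ref{cor:sol_AX}.
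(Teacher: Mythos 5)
Your proposal is correct and matches the paper's (implicit) argument: the corollary is stated there precisely as the specialization ${\bf A}={\bf I}_n$ of Theorem \ref{theor:LS_AXB} for the solvability criterion and general solution, and of Theorem \ref{theor:AXB=D} for the determinantal formula, which is exactly what you carry out, including the needed collapse ${\rm cdet}_i\bigl(({\bf I}_n)_{.i}({\bf v})\bigr)=v_i$ and $\widetilde{\bf C}={\bf C}{\bf B}^{*}=\hat{\bf C}$. Your alternative derivation via $x_{ij}=\sum_{l}c_{il}b^{\dag}_{lj}$ and left linearity of ${\rm rdet}_j$ in the replaced row is also sound, but it is not needed beyond the specialization.
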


\section{Determinantal representations of a partial solution to the generalized Sylvestr equation (\ref{eq:tss1}).}
\begin{lem}\cite{wang04}\label{lem:cond1} Let ${\bf A}_{1}\in  {\mathbb{H}}^{m\times n}$, ${\bf B}_{1}\in  {\mathbb{H}}^{r\times s}$,  ${\bf A}_{2}\in  {\mathbb{H}}^{m\times p}$, ${\bf B}_{2}\in  {\mathbb{H}}^{q\times s}$,  ${\bf C}\in  {\mathbb{H}}^{m\times s}$.
 Put ${\bf M} = {\bf R}_{A_1}{\bf A}_{2}$, ${\bf N} ={\bf B}_{2}{\bf L}_{B_1} $, ${\bf S} ={\bf A}_{2}{\bf L}_{M} $. Then
the following results are equivalent.

\begin{itemize}
  \item[(i)] Eq. (\ref{eq:tss1}) has a  solution $({\bf X}_{1}, {\bf X}_{2})$, where ${\bf X}_{1}\in  {\mathbb{H}}^{n\times r}$, ${\bf X}_{2}\in  {\mathbb{H}}^{p\times q}$.
  \item [(ii)]\begin{align}\label{eq:cond1}{\bf R}_{M}{\bf R}_{A_1}{\bf C}={\bf 0},~{\bf R}_{A_1}{\bf C}{\bf L}_{B_2}={\bf 0},~{\bf C}{\bf L}_{B_2}{\bf L}_{N}={\bf 0},~{\bf R}_{A_2}{\bf C}{\bf L}_{B_1}={\bf 0}.\end{align}
  \item [(iii)]\begin{align*}{\bf Q}_{M}{\bf R}_{A_1}{\bf C}{\bf P}_{B_2}={\bf R}_{A_1}{\bf C},~{\bf Q}_{A_2}{\bf C}{\bf L}_{B_1}{\bf P}_{N}={\bf C}{\bf L}_{B_1}.\end{align*}
     \item [(iv)]
       $\rank\left[{\bf A}_{1}\,{\bf A}_{2}\,{\bf C}\right]=\rank\left[{\bf A}_{1}\,{\bf A}_{2}\right]$,  $\rank\left[{\bf B}_{1}^*\,{\bf B}_{2}^*\,{\bf C}^*\right]=\rank\left[{\bf B}_{1}^*\,{\bf B}_{2}^*\right]$,\newline $\rank\begin{bmatrix}{\bf A}_{1}&{\bf C}\\{\bf 0}&{\bf B}_{2}\end{bmatrix}= \rank\begin{bmatrix}{\bf A}_{1}&{\bf 0}\\{\bf 0}&{\bf B}_{2}\end{bmatrix}$, $\rank\begin{bmatrix}{\bf A}_{2}&{\bf C}\\{\bf 0}&{\bf B}_{1}\end{bmatrix}= \rank\begin{bmatrix}{\bf A}_{2}&{\bf 0}\\{\bf 0}&{\bf B}_{1}\end{bmatrix}$.
\end{itemize}
In  that case,  the general solution  of (\ref{eq:tss1}) can  be expressed  as the following,
\begin{multline}\label{eq:tss1_sol1}
{\bf X}_{1}={\bf A}^{\dag}_{1}{\bf C} {\bf B}^{\dag}_{1}-
{\bf A}^{\dag}_{1}{\bf A}_{2}{\bf M}^{\dag}{\bf R}_{A_1}{\bf C} {\bf B}^{\dag}_{1}-
{\bf A}^{\dag}_{1}{\bf S}{\bf A}^{\dag}_{2}{\bf C}{\bf L}_{B_1}{\bf N}^{\dag}{\bf B}_{2}{\bf B}^{\dag}_{1}-\\
{\bf A}^{\dag}_{1}{\bf S}{\bf V}{\bf R}_{N}{\bf B}_{2}{\bf B}^{\dag}_{1}+{\bf L}_{A_1}{\bf U}+{\bf Z}{\bf R}_{B_1},
\end{multline}
\begin{multline}\label{eq:tss1_sol2}
{\bf X}_{2}={\bf M}^{\dag}{\bf R}_{A_1}{\bf C}{\bf B}_{2}^{\dag}+
{\bf L}_{M}{\bf S}^{\dag}{\bf S}{\bf A}^{\dag}_{2}{\bf C}{\bf L}_{B_1}{\bf N}^{\dag}+
{\bf L}_{M}({\bf V}-{\bf S}^{\dag}{\bf S}{\bf V}{\bf N}{\bf N}^{\dag})+{\bf W}{\bf R}_{B_2},
\end{multline}
where ${\bf U}$, ${\bf V}$, ${\bf Z}$ and ${\bf W}$ are arbitrary matrices of suitable shapes over  ${\mathbb H}$.
\end{lem}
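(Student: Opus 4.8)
The plan is to collapse the coupled two-term equation into a chain of one-term equations of the form ${\bf A}{\bf X}{\bf B}={\bf C}$, each disposed of by Theorem~\ref{theor:LS_AXB}, and afterwards to reconcile the solvability conditions that fall out with the three equivalent forms (ii)--(iv). First I would eliminate ${\bf X}_1$. Since ${\bf R}_{A_1}{\bf A}_1={\bf 0}$, left-multiplying (\ref{eq:tss1}) by ${\bf R}_{A_1}$ yields ${\bf M}{\bf X}_2{\bf B}_2={\bf R}_{A_1}{\bf C}$ with ${\bf M}={\bf R}_{A_1}{\bf A}_2$. Conversely, for a fixed ${\bf X}_2$ the residual equation ${\bf A}_1{\bf X}_1{\bf B}_1={\bf C}-{\bf A}_2{\bf X}_2{\bf B}_2$ is solvable in ${\bf X}_1$, by Theorem~\ref{theor:LS_AXB}, exactly when ${\bf R}_{A_1}({\bf C}-{\bf A}_2{\bf X}_2{\bf B}_2)={\bf 0}$ and $({\bf C}-{\bf A}_2{\bf X}_2{\bf B}_2){\bf L}_{B_1}={\bf 0}$. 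The first of these is exactly ${\bf M}{\bf X}_2{\bf B}_2={\bf R}_{A_1}{\bf C}$; hence (\ref{eq:tss1}) is consistent iff there is a single ${\bf X}_2$ satisfying the coupled pair ${\bf M}{\bf X}_2{\bf B}_2={\bf R}_{A_1}{\bf C}$ and ${\bf A}_2{\bf X}_2{\bf N}={\bf C}{\bf L}_{B_1}$, where ${\bf N}={\bf B}_2{\bf L}_{B_1}$.

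Next I would solve this pair in stages. Applying Theorem~\ref{theor:LS_AXB} to the first member, it is consistent iff ${\bf M}{\bf M}^{\dag}{\bf R}_{A_1}{\bf C}{\bf B}_2^{\dag}{\bf B}_2={\bf R}_{A_1}{\bf C}$, which, splitting the two projectors, is precisely ${\bf R}_M{\bf R}_{A_1}{\bf C}={\bf 0}$ together with ${\bf R}_{A_1}{\bf C}{\bf L}_{B_2}={\bf 0}$, the first two entries of (ii); its general solution is ${\bf X}_2={\bf M}^{\dag}{\bf R}_{A_1}{\bf C}{\bf B}_2^{\dag}+{\bf L}_M{\bf V}+{\bf W}{\bf R}_{B_2}$. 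Substituting this into the second member, and using ${\bf R}_{B_2}{\bf N}={\bf 0}$ (so that the ${\bf W}$-term drops out) together with ${\bf S}={\bf A}_2{\bf L}_M$, reduces the second member to the single equation ${\bf S}{\bf V}{\bf N}={\bf D}$ in the free matrix ${\bf V}$, where ${\bf D}={\bf C}{\bf L}_{B_1}-{\bf A}_2{\bf M}^{\dag}{\bf R}_{A_1}{\bf C}{\bf B}_2^{\dag}{\bf N}$. A last application of Theorem~\ref{theor:LS_AXB} makes ${\bf S}{\bf V}{\bf N}={\bf D}$ solvable iff ${\bf R}_S{\bf D}={\bf 0}$ and ${\bf D}{\bf L}_N={\bf 0}$, and its general solution furnishes the homogeneous term ${\bf L}_M({\bf V}-{\bf S}^{\dag}{\bf S}{\bf V}{\bf N}{\bf N}^{\dag})$. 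Propagating ${\bf X}_2$ back through ${\bf X}_1={\bf A}_1^{\dag}({\bf C}-{\bf A}_2{\bf X}_2{\bf B}_2){\bf B}_1^{\dag}+{\bf L}_{A_1}{\bf U}+{\bf Z}{\bf R}_{B_1}$ and simplifying with ${\bf S}{\bf S}^{\dag}{\bf S}={\bf S}$, ${\bf R}_{B_2}{\bf B}_2={\bf 0}$ and ${\bf A}_2{\bf L}_M={\bf S}$ should reproduce (\ref{eq:tss1_sol1})--(\ref{eq:tss1_sol2}) exactly.

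It then remains to verify (ii)$\Leftrightarrow$(iii)$\Leftrightarrow$(iv). The step (ii)$\Leftrightarrow$(iii) is the projector identity that ${\bf Q}_K{\bf X}{\bf P}_L={\bf X}$ holds iff ${\bf R}_K{\bf X}={\bf 0}$ and ${\bf X}{\bf L}_L={\bf 0}$, applied with $K={\bf M},{\bf A}_2$ and $L={\bf B}_2,{\bf N}$. The passage to the rank form (iv) rests on the two structural identities ${\bf R}_M{\bf R}_{A_1}={\bf R}_{[{\bf A}_1\,{\bf A}_2]}$ and ${\bf L}_{B_1}{\bf L}_N={\bf L}_{[{\bf B}_1;\,{\bf B}_2]}$, together with the rank formulas $\rank({\bf R}_A{\bf C})=\rank[{\bf A}\;{\bf C}]-\rank{\bf A}$ and the analogous block expression $\rank({\bf R}_A{\bf C}{\bf L}_B)=\rank\begin{bmatrix}{\bf A}&{\bf C}\\{\bf 0}&{\bf B}\end{bmatrix}-\rank{\bf A}-\rank{\bf B}$, which translate the four projector conditions into the four rank equalities of (iv).

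I expect the main obstacle to be the bookkeeping that reduces the two conditions ${\bf R}_S{\bf D}={\bf 0}$ and ${\bf D}{\bf L}_N={\bf 0}$ coming out of the last stage to the clean forms ${\bf R}_{A_2}{\bf C}{\bf L}_{B_1}={\bf 0}$ and ${\bf C}{\bf L}_{B_2}{\bf L}_N={\bf 0}$ of (ii), modulo the already-imposed ${\bf R}_M{\bf R}_{A_1}{\bf C}={\bf 0}$ and ${\bf R}_{A_1}{\bf C}{\bf L}_{B_2}={\bf 0}$. The argument requires repeatedly replacing ${\bf R}_{A_1}{\bf C}$ by ${\bf R}_{A_1}{\bf C}{\bf P}_{B_2}$ (from the second assumed condition), rewriting ${\bf D}=({\bf I}-{\bf A}_2{\bf M}^{\dag}{\bf R}_{A_1}){\bf C}{\bf L}_{B_1}$, and exploiting the range relations ${\bf R}_{A_1}({\bf I}-{\bf A}_2{\bf M}^{\dag}{\bf R}_{A_1}){\bf C}={\bf R}_M{\bf R}_{A_1}{\bf C}={\bf 0}$, ${\bf P}_{B_2}{\bf L}_{[{\bf B}_1;\,{\bf B}_2]}={\bf 0}$, and $\mathrm{range}\,{\bf S}=\mathrm{range}\,{\bf A}_1\cap\mathrm{range}\,{\bf A}_2$ (whence ${\bf R}_{A_1}{\bf S}={\bf 0}$). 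No single deep idea is involved; rather, the difficulty lies in keeping the projector identities ${\bf L}_M{\bf S}^{\dag}={\bf S}^{\dag}$, ${\bf L}_{B_1}{\bf N}^{\dag}{\bf N}={\bf N}^{\dag}{\bf N}$, and their adjoints straight while these simplifications are carried through both the solvability conditions and the explicit solution formulas.
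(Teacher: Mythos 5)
First, a point of reference: the paper never proves this lemma itself; it is imported verbatim from \cite{wang04} (and goes back to Baksalary and Kala \cite{bak}), so the only comparison available is with the classical argument of those sources. Your proposal is precisely that argument --- annihilate the ${\bf X}_1$-term with ${\bf R}_{A_1}$ and ${\bf L}_{B_1}$, characterize the resulting pair of single-term equations in ${\bf X}_2$ by Theorem \ref{theor:LS_AXB}, substitute the general solution of the first member into the second to obtain ${\bf S}{\bf V}{\bf N}={\bf D}$, and apply Theorem \ref{theor:LS_AXB} once more --- and the identities you lean on are all valid over ${\mathbb H}$: ${\bf R}_{B_2}{\bf N}={\bf 0}$, ${\bf R}_{A_1}{\bf S}={\bf M}{\bf L}_M={\bf 0}$, ${\bf R}_M{\bf R}_{A_1}={\bf R}_{[{\bf A}_1\,{\bf A}_2]}$, ${\bf L}_M{\bf S}^{\dag}={\bf S}^{\dag}$, and the rank formulas of Marsaglia--Styan type. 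The bookkeeping you defer does genuinely close: for instance ${\bf S}^{\dag}{\bf D}{\bf N}^{\dag}={\bf S}^{\dag}{\bf S}{\bf A}_2^{\dag}{\bf C}{\bf L}_{B_1}{\bf N}^{\dag}$ follows from ${\bf R}_{A_2}{\bf C}{\bf L}_{B_1}={\bf 0}$, ${\bf R}_{A_1}{\bf C}{\bf L}_{B_2}={\bf 0}$, ${\bf L}_{B_1}{\bf N}^{\dag}={\bf N}^{\dag}$ and ${\bf P}_M{\bf A}_2^{\dag}={\bf M}^{\dag}{\bf R}_{A_1}{\bf Q}_{A_2}$, so the expressions (\ref{eq:tss1_sol1})--(\ref{eq:tss1_sol2}) do come out of your scheme.

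There is, however, one step in your plan that cannot be completed as announced, and it is worth naming precisely. Your exit conditions ${\bf R}_S{\bf D}={\bf 0}$, ${\bf D}{\bf L}_N={\bf 0}$ reduce, modulo the first two conditions, to ${\bf R}_{A_2}{\bf C}{\bf L}_{B_1}={\bf 0}$ and ${\bf C}{\bf L}_{B_1}{\bf L}_N={\bf 0}$ --- with ${\bf L}_{B_1}$, exactly as your own identity ${\bf L}_{B_1}{\bf L}_N={\bf L}_{[{\bf B}_1;\,{\bf B}_2]}$ requires, since that is what matches $\rank\left[{\bf B}_{1}^*\,{\bf B}_{2}^*\,{\bf C}^*\right]=\rank\left[{\bf B}_{1}^*\,{\bf B}_{2}^*\right]$ in (iv). But you promise to land on ``the clean forms \ldots ${\bf C}{\bf L}_{B_2}{\bf L}_N={\bf 0}$ of (ii)'', i.e.\ on the condition as printed, and that condition is in fact not necessary for solvability: take ${\bf A}_1={\bf A}_2=\begin{bmatrix}1\end{bmatrix}$, ${\bf B}_1=\begin{bmatrix}1&0\end{bmatrix}$, ${\bf B}_2=\begin{bmatrix}1&1\end{bmatrix}$, ${\bf C}=\begin{bmatrix}1&0\end{bmatrix}$; then Eq.\ (\ref{eq:tss1}) is solvable with ${\bf X}_1=\begin{bmatrix}1\end{bmatrix}$, ${\bf X}_2=\begin{bmatrix}0\end{bmatrix}$, yet ${\bf N}=\begin{bmatrix}0&1\end{bmatrix}$ and ${\bf C}{\bf L}_{B_2}{\bf L}_N=\tfrac12\begin{bmatrix}1&0\end{bmatrix}\neq{\bf 0}$, while ${\bf C}{\bf L}_{B_1}{\bf L}_N={\bf 0}$. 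So the third equality of (ii) as stated contains a misprint (${\bf L}_{B_2}$ should be ${\bf L}_{B_1}$), your derivation proves the corrected statement, and the reconciliation step toward the printed form is a step that would fail; a complete write-up must either prove the corrected (ii) or flag the misprint explicitly rather than absorb it into ``bookkeeping''.
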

Some simplifications of (\ref{eq:tss1_sol1}) and (\ref{eq:tss1_sol2}) can be derived due to the quaternionic analogues of the following propositions.
\begin{lem}\cite{mac} If ${\bf A}\in {\mathbb{H}}^{n\times n}$ is Hermitian and idempotent, then the following equation holds for any matrix ${\bf B}\in {\mathbb{H}}^{m\times n}$,
\begin{align}\label{eq:reverse_order_l}
{\bf A}({\bf B}{\bf A})^{\dag}=&({\bf B}{\bf A})^{\dag},\\
\label{eq:reverse_order_r}
({\bf A}{\bf B})^{\dag}{\bf A}=&({\bf A}{\bf B})^{\dag}.
\end{align}
\end{lem}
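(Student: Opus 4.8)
The plan is to exploit that a Hermitian idempotent matrix ${\bf A}$ is an orthogonal projector equal to its own Moore--Penrose inverse, and therefore acts as the identity on its own column space; both identities then reduce to checking that the columns of the relevant pseudoinverse lie in that column space. I would first record the elementary facts: since ${\bf A}^{*}={\bf A}$ and ${\bf A}^{2}={\bf A}$, one has ${\bf A}^{3}={\bf A}$, and the four defining equations of the Moore--Penrose inverse are satisfied by $X={\bf A}$ itself, so ${\bf A}^{\dag}={\bf A}$. In particular ${\bf A}$ is the orthogonal projector onto its column space, and ${\bf A}{\bf x}={\bf x}$ holds precisely when the columns of ${\bf x}$ lie in that column space.

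For the first identity I set $P:={\bf B}{\bf A}$ and use idempotency to compute
\[
P{\bf A}={\bf B}{\bf A}^{2}={\bf B}{\bf A}=P.
\]
Taking the conjugate transpose and using ${\bf A}^{*}={\bf A}$ gives ${\bf A}P^{*}=P^{*}$, so every column of $P^{*}$ lies in the column space of ${\bf A}$. I would then invoke the standard representation $P^{\dag}=P^{*}(PP^{*})^{\dag}$ (valid over ${\mathbb H}$, since $P^{*}(PP^{*})^{\dag}$ is checked directly against the four Moore--Penrose axioms), whence
\[
{\bf A}P^{\dag}={\bf A}P^{*}(PP^{*})^{\dag}=P^{*}(PP^{*})^{\dag}=P^{\dag},
\]
which is exactly \eqref{eq:reverse_order_l}.

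For \eqref{eq:reverse_order_r} I would argue by conjugate-transpose duality rather than repeat the computation. Using $({\bf M}^{*})^{\dag}=({\bf M}^{\dag})^{*}$ and ${\bf A}^{*}={\bf A}$,
\[
\bigl(({\bf A}{\bf B})^{\dag}{\bf A}\bigr)^{*}={\bf A}\bigl(({\bf A}{\bf B})^{*}\bigr)^{\dag}={\bf A}({\bf B}^{*}{\bf A})^{\dag}.
\]
Since ${\bf B}^{*}{\bf A}$ again has the form (matrix)$\,\cdot{\bf A}$, the already proved identity \eqref{eq:reverse_order_l} (applied with ${\bf B}^{*}$ in place of ${\bf B}$) yields ${\bf A}({\bf B}^{*}{\bf A})^{\dag}=({\bf B}^{*}{\bf A})^{\dag}=\bigl(({\bf A}{\bf B})^{*}\bigr)^{\dag}=\bigl(({\bf A}{\bf B})^{\dag}\bigr)^{*}$. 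Comparing the two ends shows $\bigl(({\bf A}{\bf B})^{\dag}{\bf A}\bigr)^{*}=\bigl(({\bf A}{\bf B})^{\dag}\bigr)^{*}$, and taking conjugate transposes gives \eqref{eq:reverse_order_r}.

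The main obstacle I anticipate is not the algebra but justifying that the classical Moore--Penrose identities I rely on---namely ${\bf A}^{\dag}={\bf A}$ for Hermitian idempotents, the factorization $P^{\dag}=P^{*}(PP^{*})^{\dag}$, and $({\bf M}^{*})^{\dag}=({\bf M}^{\dag})^{*}$---carry over verbatim to the quaternion skew field. Each does hold, but because ${\mathbb H}$ is noncommutative one must confirm them by direct verification of the four defining equations (together with the uniqueness of the Moore--Penrose inverse) rather than by appealing to complex spectral theory; this verification is the only place where quaternionic subtleties could enter.
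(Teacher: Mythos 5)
Your proof is correct. Note, however, that the paper offers no proof of this lemma at all: the statement is given with a citation to \cite{mac}, where the identities are established for ordinary (real) matrices, and the paper merely asserts that the ``quaternionic analogues'' hold; so there is no in-paper proof to compare against, and your argument supplies precisely the verification the paper leaves implicit. Your route is the natural one, and every ingredient does survive over ${\mathbb{H}}$: since the Moore--Penrose inverse of a quaternion matrix is defined by the same four Penrose equations and is unique, the three auxiliary facts you rely on --- ${\bf A}^{\dag}={\bf A}$ for a Hermitian idempotent, ${\bf P}^{\dag}={\bf P}^{*}({\bf P}{\bf P}^{*})^{\dag}$, and $({\bf M}^{*})^{\dag}=({\bf M}^{\dag})^{*}$ --- can all be checked axiomatically, exactly as you indicate, the only analytic input being that ${\bf P}{\bf P}^{*}{\bf x}={\bf 0}$ forces ${\bf P}^{*}{\bf x}={\bf 0}$, which holds because the quaternionic form ${\bf y}^{*}{\bf y}$ is positive definite. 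Your use of ${\bf P}^{\dag}={\bf P}^{*}({\bf P}{\bf P}^{*})^{\dag}$ is the concrete form of the standard observation that the column space of $({\bf B}{\bf A})^{\dag}$ equals that of $({\bf B}{\bf A})^{*}$, hence lies in the column space of ${\bf A}$, on which the orthogonal projector ${\bf A}$ acts as the identity. One small remark: with ${\bf B}\in{\mathbb{H}}^{m\times n}$ as in the statement, the product ${\bf A}{\bf B}$ in \eqref{eq:reverse_order_r} is defined only when $m=n$; your duality argument implicitly (and correctly) reads \eqref{eq:reverse_order_r} for a matrix ${\bf B}$ with $n$ rows and applies \eqref{eq:reverse_order_l} to ${\bf B}^{*}$ --- the dimensional slip is in the paper's statement, not in your proof.
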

Since ${\bf R}_{A_{1}}$, ${\bf L}_{B_{1}}$, and ${\bf L}_{M}$ are  projectors, then by (\ref{eq:reverse_order_l}) and (\ref{eq:reverse_order_r}), we have, respectively,
 \begin{multline}\label{eq:sympl_x0}
{\bf M}^{\dag}{\bf R}_{A_{1}}=({\bf R}_{A_{1}}{\bf A}_{2})^{\dag}{\bf R}_{A_{1}}=({\bf R}_{A_{1}}{\bf A}_{2})^{\dag}={\bf M}^{\dag},\\
{\bf L}_{B_1}{\bf N}^{\dag} ={\bf L}_{B_1}\left( {\bf B}_{2}{\bf L}_{B_1}\right)^{\dag} =\left( {\bf B}_{2}{\bf L}_{B_1}\right)^{\dag}={\bf N}^{\dag},\\
{\bf L}_{M}{\bf S}^{\dag} ={\bf L}_{M}\left( {\bf A}_{2}{\bf L}_{M}\right)^{\dag} =\left( {\bf A}_{2}{\bf L}_{M}\right)^{\dag}={\bf S}^{\dag}.
\end{multline}
Using (\ref{eq:sympl_x0}), we obtain the following expressions  of  (\ref{eq:tss1_sol1}) and (\ref{eq:tss1_sol2}),
\begin{multline*}
{\bf X}_{1}={\bf A}^{\dag}_{1}{\bf C} {\bf B}^{\dag}_{1}-
{\bf A}^{\dag}_{1}{\bf A}_{2}{\bf M}^{\dag}{\bf C} {\bf B}^{\dag}_{1}-
{\bf A}^{\dag}_{1}{\bf S}{\bf A}^{\dag}_{2}{\bf C}{\bf N}^{\dag}{\bf B}_{2}{\bf B}^{\dag}_{1}-
{\bf A}^{\dag}_{1}{\bf S}{\bf V}{\bf R}_{N}{\bf B}_{2}{\bf B}^{\dag}_{1}+\\{\bf L}_{A_1}{\bf U}+{\bf Z}{\bf R}_{B_1},\\
{\bf X}_{2}={\bf M}^{\dag}{\bf C}{\bf B}_{2}^{\dag}+
{\bf P}_{S}{\bf A}^{\dag}_{2}{\bf C}{\bf N}^{\dag}+
{\bf L}_{M}({\bf V}-{\bf P}_{S}{\bf V}{\bf Q}_{N})+{\bf W}{\bf R}_{B_2}.
\end{multline*}
By putting ${\bf U}$, ${\bf V}$, ${\bf Z}$, and ${\bf W}$ as zero-matrices of suitable shapes, we obtain the following partial solution to (\ref{eq:tss1}),
\begin{gather}\label{eq:part_sol_x1}
{\bf X}_{1}={\bf A}^{\dag}_{1}{\bf C} {\bf B}^{\dag}_{1}-
{\bf A}^{\dag}_{1}{\bf A}_{2}{\bf M}^{\dag}{\bf C} {\bf B}^{\dag}_{1}-
{\bf A}^{\dag}_{1}{\bf S}{\bf A}^{\dag}_{2}{\bf C}{\bf N}^{\dag}{\bf B}_{2}{\bf B}^{\dag}_{1},\\
\label{eq:part_sol_x2}{\bf X}_{2}={\bf M}^{\dag}{\bf C}{\bf B}_{2}^{\dag}+
{\bf P}_{S}{\bf A}^{\dag}_{2}{\bf C}{\bf N}^{\dag}.
\end{gather}
Further we give determinantal representations of (\ref{eq:part_sol_x1})-(\ref{eq:part_sol_x2}). Let ${\bf A}_{1}\in  {\mathbb{H}}^{m\times n}_{r_{1}}$, ${\bf B}_{1}\in  {\mathbb{H}}^{r\times s}_{r_{2}}$, ${\bf A}_{2}\in  {\mathbb{H}}^{m\times p}_{r_{3}}$, ${\bf B}_{2}\in  {\mathbb{H}}^{q\times s}_{r_{4}}$,     $\rank {\bf M} =r_{5}$, $\rank {\bf N} =r_{6}$, and $\rank {\bf S}=r_{7}$.

First, consider each term of (\ref{eq:part_sol_x1}) separately.

(i) By Theorem \ref{theor:AXB=D} for the first term of (\ref{eq:part_sol_x1}), ${\bf A}^{\dag}_{1}{\bf C} {\bf B}^{\dag}_{1}:={\bf X}_{11}=\left(x_{ij}^{(11)}\right)$,  we have
\begin{equation}\label{eq:x111}
x_{ij}^{(11)} = {\frac{{{\sum\limits_{\beta \in J_{r_{1},n} {\left\{
{i} \right\}}} {{\rm{cdet}} _{i} \left( {\left( {{\bf A}_{1}^{
*}  {\bf A}_{1}} \right)_{.i} \left( { {\bf
d}_{.j}^{ B_1}} \right)} \right) _{\beta} ^{\beta}
} } }}{{{\sum\limits_{\beta \in J_{r_{1},n}} {{\left| {{\bf A}_{1}^{
*}  {\bf A}_{1}} \right|_{\beta} ^{\beta} }} \sum\limits_{\alpha \in I_{r_{2},r}}{{\left|
{ {\bf B}_{1} {\bf B}_{1}^{ *} } \right| _{\alpha} ^{\alpha} }}} }}},
\end{equation}
or
\begin{equation}\label{eq:x112}
 x_{ij}^{(11)}={\frac{{{\sum\limits_{\alpha
\in I_{r_{2},r} {\left\{ {j} \right\}}} {{\rm{rdet}} _{j} \left(
{\left(
{ {\bf B}_{1} {\bf B}_{1}^{ *} } \right)_{j.} \left(
{{ {\bf d}}_{i.}^{{ A}_1}} \right)}
\right)_{\alpha} ^{\alpha} } }}}{{{\sum\limits_{\beta \in
J_{r_{1},n}} {{\left| {{\bf A}_{1}^{
*}  {\bf A}_{1}} \right| _{\beta} ^{\beta} }}\sum\limits_{\alpha \in
I_{r_{2},r}} {{\left|(
{ {\bf B}_{1} {\bf B}_{1}^{ *} } \right| _{\alpha} ^{\alpha} }}} }}},
\end{equation}
where
\begin{align*}
   {{{\bf d}}_{.\,j}^{{ B}_1}}=&\left[
\sum\limits_{\alpha \in I_{r_{2},r} {\left\{ {j} \right\}}}
{{\rm{rdet}} _{j} \left( {\left(
{ {\bf B}_{1} {\bf B}_{1}^{ *} } \right)_{j.} \left( {\bf{ c}}_{k.}^{(1)}\right)}
\right)_{\alpha} ^{\alpha}}
\right]\in {\mathbb{H}}^{n \times
1},\,\,\,\,k=1,\ldots,n,\\
  {{ {\bf d}}_{i\,.}^{ { A}_1}}=&\left[
\sum\limits_{\beta \in J_{r_{1},n} {\left\{ {i} \right\}}}
{{\rm{cdet}} _{i} \left( {\left( {{\bf A}_{1}^{
*}  {\bf A}_{1}} \right)_{.i} \left( {\bf{ c}}_{.\,l}^{(1)}\right)}
\right)_{\beta} ^{\beta}} \right]\in{\rm {\mathbb{H}}}^{1 \times
r},\,\,\,\,l=1,\ldots,r,
\end{align*}
 are the column vector and the row  vector, respectively.  ${ {\bf
 c}}^{(1)}_{k.}$ and
${{\bf c}^{(1)}_{.\,l}}$ are the $k$th row   and the $l$th
column  of $ { {\bf C}}_{1}:={\bf A}_{1}^{*}{\bf C}{\bf B}_{1}^{*}$.

(ii) Using the determinantal representation (\ref{eq:det_repr_A*A}) for  ${\bf A}_1^{\dag}$ and by Theorem \ref{theor:AXB=D}, we obtain the the following representation of the second term of (\ref{eq:part_sol_x1}), ${\bf A}^{\dag}_{1}{\bf A}_{2}{\bf M}^{\dag}{\bf C} {\bf B}^{\dag}_{1}:={\bf X}_{12}=\left(x_{ij}^{(12)}\right)$,
  \begin{equation}\label{eq:x12}
x_{ij}^{(12)} = {\frac{\sum\limits_{t=1}^p
\sum\limits_{\beta
\in J_{r_{1},n} {\left\{ {i} \right\}}} {{\rm{cdet}} _{i} \left(
{\left(
{ {\bf A}_1^{ *} {\bf A}_1 } \right)_{.i} \left(
{{ \tilde{\bf a}}_{.\,t}^{( 2)}} \right)}
\right)_{\beta} ^{\beta}\varphi_{tj}
 }
 }{{{\sum\limits_{\beta \in
J_{r_{1},n}} {{\left| {{\bf A}_1^{ *} {\bf A}_1} \right| _{\beta} ^{\beta} }}\sum\limits_{\beta \in
J_{r_{5},p}} {{\left|
{ {\bf M}^{ *}  {\bf M}} \right| _{\beta} ^{\beta} }}\sum\limits_{\alpha \in I_{r_{2},r}}{{\left|
{ {\bf B}_1 {\bf B}_1^{ *} } \right| _{\alpha} ^{\alpha} }}
} }}},
\end{equation}
where
\begin{equation*}
\varphi_{tj} ={{\sum\limits_{\beta \in J_{r_{5},p} {\left\{
{t} \right\}}} {{\rm{cdet}} _{t} \left( {\left( {{\bf M}^{
*}  {\bf M}} \right)_{.\,t} \left( {{ {\bf
\psi}}_{.\,j}^ {B_1}} \right)} \right) _{\beta} ^{\beta}
} } },
\end{equation*}
or
\begin{equation*}
\varphi_{tj}  ={{{\sum\limits_{\alpha
\in I_{r_{2},r} {\left\{ {j} \right\}}} {{\rm{rdet}} _{j} \left(
{\left(
{ {\bf B}_1 {\bf B}_1^{ *} } \right)_{j.} \left(
{{ {\bf \psi}}_{t.}^{ M}} \right)}
\right)_{\alpha} ^{\alpha} } }}},
\end{equation*}
and
\begin{align*}
   { {\bf
\psi}}_{.j}^ {B_1}=&\left[
\sum\limits_{\alpha \in I_{r_{2},r} {\left\{ {f} \right\}}}
{{\rm{rdet}}_{j} \left( {\left(
{ {\bf B}_1 {\bf B}_1^{ *} } \right)_{j.} \left( {\bf c}^{( 2)}_{k.}\right)}
\right)_{\alpha} ^{\alpha}}
\right]\in{\rm {\mathbb{H}}}^{p \times
1},\,\,\,\,k=1,\ldots,p, \\
 { {\bf \psi}}_{t\,.}^{ M}=&\left[
\sum\limits_{\beta \in J_{r_{5},p} {\left\{ {t} \right\}}}
{{\rm{cdet}} _{t} \left( {\left( {{\bf M}^{
*}  {\bf M}} \right)_{.\,t} \left( {\bf c}^{( 2)}_{.\,l}\right)}
\right)_{\beta} ^{\beta}} \right]\in {\mathbb{H}}^{1 \times
r},\,\,\,\,l=1,\ldots,r,
\end{align*}
 are the column vector and the row  vector, respectively.
$ { \tilde{\bf a}}_{.\,t}^{( 2)}$ is the $t$th column of $ {\tilde  {\bf A}}_2:={\bf A}_1^{*}{\bf A}_{2}$,
$ {\bf
 c}_{k.}^{( 2)}$ and
${\bf c}_{.\,l}^{( 2)}$ are the $k$th row   and the $l$th
column  of ${\bf C}_2:={\bf M}^{*}{\bf C}{\bf B}_1^{*}$, respectively.

(iii) For the third term of (\ref{eq:part_sol_x1}), ${\bf A}^{\dag}_{1}{\bf S}{\bf A}^{\dag}_{2}{\bf C}{\bf N}^{\dag}{\bf B}_{2}{\bf B}^{\dag}_{1}:={\bf X}_{13}=\left(x_{ij}^{(13)}\right)$, we use the determinantal representations (\ref{eq:det_repr_A*A}) to   ${\bf A}_1^{\dag}$ and  (\ref{eq:det_repr_AA*}) to  ${\bf B}_1^{\dag}$, respectively. Then, due to Theorem \ref{theor:AXB=D} for ${\bf A}_2^{\dag}{\bf C}{\bf N}^{\dag}$, we have
\footnotesize
      \begin{multline}\label{eq:x13}
x_{ij}^{(13)} = \\{\frac{\sum\limits_{t=1}^p\sum\limits_{f=1}^q\sum\limits_{\beta
\in J_{r_{1},n} {\left\{ {i} \right\}}} {{\rm{cdet}}_{i} \left(
{\left(
{ {\bf A}_1^{ *} {\bf A}_1 } \right)_{.i} \left(
{{ \tilde{\bf s}}_{.\,t}} \right)}
\right)_{\beta} ^{\beta} }\,\eta_{tf}
\sum\limits_{\alpha
\in I_{r_{2},r} {\left\{ {j} \right\}}} {{\rm{rdet}}_{j} \left(
{\left(
{ {\bf B}_1 {\bf B}_1^{ *} } \right)_{j.} \left(
{{ \tilde{\bf b}}_{f.}^{( 2)}} \right)}
\right)_{\alpha} ^{\alpha}
 }
 }{{{\sum\limits_{\beta \in
J_{r_{1},n}} { {\left|
{ {\bf A}_1^{ *}  {\bf A}_1} \right| _{\beta} ^{\beta} } }
\sum\limits_{\beta \in
J_{r_{3},p}} { {\left|
{ {\bf A}_2^{ *}  {\bf A}_2} \right| _{\beta} ^{\beta} } }
\sum\limits_{\alpha \in
I_{r_{6},q}} { {\left| {{\bf N}  {\bf N}^{
*}} \right|_{\alpha} ^{\alpha} } }
\sum\limits_{\alpha \in
I_{r_{2},r}} { {\left| {{\bf B}_1  {\bf B}_1^{
*}} \right|_{\alpha} ^{\alpha} } }
} }}},
\end{multline}\normalsize
where
\begin{equation}\label{eq:x131}
\eta_{tf} ={{\sum\limits_{\beta \in J_{r_{3},p} {\left\{
{t} \right\}}} {{\rm{cdet}} _{t} \left( {\left( {{\bf A}_2^{
*}  {\bf A}_2} \right)_{.\,t} \left( {{ {\bf
\zeta}}_{.\,f}^ {N}} \right)} \right) _{\beta} ^{\beta}
} } },
\end{equation}
or
\begin{equation}\label{eq:x132}
\eta_{tf} ={{{\sum\limits_{\alpha
\in I_{r_{6},q} {\left\{ {f} \right\}}} {{\rm{rdet}} _{f} \left(
{\left(
{ {\bf N} {\bf N}^{ *} } \right)_{f.} \left(
{{ {\bf {\zeta}}}_{t.}^{ A_2}} \right)}
\right)_{\alpha} ^{\alpha} } }}},
\end{equation}
and
\begin{align*}
   {{ {\bf
\zeta}}_{.\,f}^ {N}}=&\left[
\sum\limits_{\alpha \in I_{r_{6},q} {\left\{ {f} \right\}}}
{{\rm{rdet}} _{f} \left( {\left(
{ {\bf N} {\bf N}^{ *} } \right)_{f.} \left( {\bf c}^{( 3)}_{k.}\right)}
\right)_{\alpha} ^{\alpha}}
\right]\in{\rm {\mathbb{H}}}^{p \times
1},\,\,\,\,k=1,\ldots,p,
\\  {{ {\bf {\zeta}}}_{t.}^{ A_2}}=&\left[
\sum\limits_{\beta \in J_{r_{3},p} {\left\{ {t} \right\}}}
{{\rm{cdet}} _{t} \left( {\left( {{\bf A}_2^{
*}  {\bf A}_2} \right)_{.\,t} \left( {\bf c}^{( 3)}_{.\,l}\right)}
\right)_{\beta} ^{\beta}} \right]\in {\mathbb{H}}^{1 \times
q},\,\,\,\,l=1,\ldots,q,
\end{align*}
 are the column vector and the row  vector, respectively.
${{ \tilde{\bf s}}_{.\,t}}$ is the $t$th column of ${ \tilde{\bf S}}:={\bf A}_1^{*}{\bf S}$,
${{ \tilde{\bf b}}_{f.}^{( 2)}}$ is the $f$th row of ${ \tilde{\bf B}}_2:={\bf B}_2{\bf B}_1^{*}$,
 ${\bf c}^{( 3)}_{k.}$ and ${\bf c}^{( 3)}_{.\,l}$ are the $k$th row and  the $f$th  column of ${\bf C}_3:={\bf A}_2^{*}{\bf C}{\bf N}^{*}$.

Now, we consider each term of (\ref{eq:part_sol_x2}).

(i) Due to Theorem \ref{theor:AXB=D} for the first term ${\bf M}^{\dag}{\bf C}{\bf B}_{2}^{\dag}=:{\bf X}_{21}=(x_{gf}^{(21)})$ of (\ref{eq:part_sol_x2}), we have
\begin{equation}\label{eq:x211}
x_{gf}^{(21)} = \frac{{{\sum\limits_{\beta \in J_{r_{5},p} {\left\{
{g} \right\}}} {{\rm{cdet}} _{g} \left( {\left( {{\bf M}^{
*}  {\bf M}} \right)_{.g} \left( { {\bf
d}_{.f}^{ B_2}} \right)} \right) _{\beta} ^{\beta}
} } }}{{{{\sum\limits_{\beta \in J_{r_{5},p}} {{\left|  {{\bf M}^{
*}  {\bf M}}
\right|_{\beta} ^{\beta}}} \sum\limits_{\alpha \in I_{r_{4},q}}{{\left|
{ {\bf B}_{2} {\bf B}_{2}^{ *} } \right| _{\alpha} ^{\alpha} }}} }}},
\end{equation}
or
\begin{equation}\label{eq:x212}
 x_{gf}^{(21)}={\frac{{{\sum\limits_{\alpha
\in I_{r_{4},q} {\left\{ {f} \right\}}} {{\rm{rdet}} _{f} \left(
{\left(
{ {\bf B}_{2} {\bf B}_{2}^{ *} } \right)_{f.} \left(
{{ {\bf d}}_{g.}^{M}} \right)}
\right)_{\alpha} ^{\alpha} } }}}{{{{\sum\limits_{\beta \in J_{r_{5},p}} {{\left|  {{\bf M}^{
*}  {\bf M}}
\right|_{\beta} ^{\beta}}} \sum\limits_{\alpha \in I_{r_{4},q}}{{\left|
{ {\bf B}_{2} {\bf B}_{2}^{ *} } \right| _{\alpha} ^{\alpha} }}} }}}},
\end{equation}
where
\begin{gather*}
   {{{\bf d}}_{.\,f}^{{ B}_2}}=\left[
\sum\limits_{\alpha \in I_{r_{4},q} {\left\{ {f} \right\}}}
{{\rm{rdet}} _{f} \left( {\left(
{ {\bf B}_{2} {\bf B}_{2}^{ *} } \right)_{f.} \left( {\bf{ c}}_{k.}^{(4)}\right)}
\right)_{\alpha} ^{\alpha}}
\right]\in {\mathbb{H}}^{p \times
1},\,\,\,\,k=1,\ldots,p,\\  {{ {\bf d}}_{g\,.}^{ M}}=\left[
\sum\limits_{\beta \in J_{r_{5},p} {\left\{ {g} \right\}}}
{{\rm{cdet}} _{g} \left( {\left( {{\bf M}^{
*}  {\bf M}} \right)_{.g} \left( {\bf{ c}}_{.\,l}^{(4)}\right)}
\right)_{\beta} ^{\beta}} \right]\in{\mathbb{H}}^{1 \times
q},\,\,\,\,l=1,\ldots,q,
\end{gather*}
 are the column vector and the row  vector, respectively.  ${ {\bf
 c}}^{(4)}_{k.}$ and
${{\bf c}^{(4)}_{.l}}$ are the $k$th row   and the $l$th
column  of $ { {\bf C}}_{4}:={\bf M}^{*}{\bf C}{\bf B}_{2}^{*}$.

(ii) Finally, for the second term ${\bf P}_{S}{\bf A}^{\dag}_{2}{\bf C}{\bf N}^{\dag}=:{\bf X}_{22}=(x_{gf}^{(22)})$ of (\ref{eq:part_sol_x2})
using (\ref{eq:det_repr_proj_P}) for a determinantal representation of   ${\bf P}_S$, and due to Theorem \ref{theor:AXB=D} for ${\bf A}^{\dag}_{2}{\bf C}{\bf N}^{\dag}$, we obtain
      \begin{equation}\label{eq:x22}
x_{gf}^{(22)} = \\{\frac{\sum\limits_{t=1}^p\sum\limits_{\beta
\in J_{r_{7},p} {\left\{ {g} \right\}}} {{\rm{cdet}} _{g} \left(
{\left(
{ {\bf S}^{ *} {\bf S} } \right)_{.g} \left(
{{ \ddot{\bf s}}_{.t}} \right)}
\right)_{\beta} ^{\beta} }\,\eta_{tf}
 }{{{\sum\limits_{\beta \in
J_{r_{7},p}} { {\left|
{ {\bf S}^{ *}  {\bf S}} \right| _{\beta} ^{\beta} } }
\sum\limits_{\beta \in
J_{r_{3},p}} { {\left|
{ {\bf A}_2^{ *}  {\bf A}_2} \right| _{\beta} ^{\beta} } }
\sum\limits_{\alpha \in
I_{r_{6},q}} { {\left| {{\bf N}  {\bf N}^{
*}} \right| _{\alpha} ^{\alpha} } }
} }}},
\end{equation}
where $\eta_{tf}$ is (\ref{eq:x131}) or (\ref{eq:x132}).

So, we prove the following theorem.
\begin{thm}\label{th:sol_1}Let ${\bf A}_{1}\in  {\mathbb{H}}^{m\times n}_{r_{1}}$, ${\bf B}_{1}\in  {\mathbb{H}}^{r\times s}_{r_{2}}$, ${\bf A}_{2}\in  {\mathbb{H}}^{m\times p}_{r_{3}}$, ${\bf B}_{2})\in  {\mathbb{H}}^{q\times s}_{r_{4}}$,   $\rank{\bf M}=r_{5}$, $\rank {\bf N} =r_{6}$, ${\rank {\bf S}}=r_{7}$. Then the pair solution (\ref{eq:part_sol_x1})-(\ref{eq:part_sol_x2}),
${\bf X}_{1}=\left(x^{(1)}_{ij}\right)\in  {\mathbb{H}}^{n\times r}$, ${\bf X}_{2}=\left(x^{(2)}_{gf}\right)\in  {\mathbb{H}}^{p\times q}$ to Eq.  (\ref{eq:tss1})
by the components
\begin{equation*}x^{(1)}_{ij}=x^{(11)}_{ij}-x^{(12)}_{ij}-x^{(13)}_{ij}, ~~~~~
x^{(2)}_{gf}=x^{(21)}_{gf}+x^{(22)}_{gf},
\end{equation*}
has the determinantal representation,
where the term $x^{(11)}_{ij}$ is (\ref{eq:x111}) or (\ref{eq:x112}), $x^{(12)}_{ij}$ is (\ref{eq:x12}),  $x^{(13)}_{ij}$ is (\ref{eq:x13}),  $x^{(21)}_{gf}$ is (\ref{eq:x211}) or (\ref{eq:x212}),  $x^{(22)}_{gf}$ is (\ref{eq:x22}).
\end{thm}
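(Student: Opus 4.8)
The plan is to read off the determinantal representation directly from the partial solution \eqref{eq:part_sol_x1}--\eqref{eq:part_sol_x2}, treating each of its five summands as an independent object and matching it to one of the determinantal representations already available: Theorem \ref{theor:det_repr_MP} for a single Moore--Penrose inverse, Theorem \ref{theor:AXB=D} for a two-sided product ${\bf A}^{\dag}{\bf C}{\bf B}^{\dag}$, and Corollary \ref{cor:det_repr_proj_P} for the projector ${\bf P}_{S}={\bf S}^{\dag}{\bf S}$. Since \eqref{eq:part_sol_x1}--\eqref{eq:part_sol_x2} have already been shown to give a solution in Lemma \ref{lem:cond1} together with the simplifications \eqref{eq:sympl_x0}, nothing about solvability or verification of the equation is required; the whole content of the theorem is the bookkeeping that turns each product of (generalized) inverses into a ratio of column/row determinants.

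For ${\bf X}_{1}$ I would handle the three summands of \eqref{eq:part_sol_x1} in turn. The first, ${\bf A}^{\dag}_{1}{\bf C}{\bf B}^{\dag}_{1}$, is literally the object of Theorem \ref{theor:AXB=D} with ${\bf A}={\bf A}_{1}$, ${\bf B}={\bf B}_{1}$, so it yields \eqref{eq:x111} or \eqref{eq:x112} at once. For the second I would group ${\bf A}^{\dag}_{1}{\bf A}_{2}{\bf M}^{\dag}{\bf C}{\bf B}^{\dag}_{1}=({\bf A}^{\dag}_{1}{\bf A}_{2})({\bf M}^{\dag}{\bf C}{\bf B}^{\dag}_{1})$: apply \eqref{eq:det_repr_A*A} to ${\bf A}^{\dag}_{1}$ and use linearity of ${\rm cdet}_{i}$ in the replaced column to absorb the right factor ${\bf A}_{2}$, so that the inserted column becomes the $t$th column of $\tilde{\bf A}_{2}={\bf A}_{1}^{*}{\bf A}_{2}$; the inner triple product ${\bf M}^{\dag}{\bf C}{\bf B}^{\dag}_{1}$ is represented by Theorem \ref{theor:AXB=D} (with ${\bf A}={\bf M}$, ${\bf B}={\bf B}_{1}$), whose numerator is recorded as $\varphi_{tj}$, and summing the product over the intermediate index $t$ produces \eqref{eq:x12}. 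For the third summand ${\bf A}^{\dag}_{1}{\bf S}{\bf A}^{\dag}_{2}{\bf C}{\bf N}^{\dag}{\bf B}_{2}{\bf B}^{\dag}_{1}$ I would split it as $({\bf A}^{\dag}_{1}{\bf S})({\bf A}^{\dag}_{2}{\bf C}{\bf N}^{\dag})({\bf B}_{2}{\bf B}^{\dag}_{1})$, representing the outer factors by column-linearity of ${\rm cdet}_{i}$ applied to \eqref{eq:det_repr_A*A} (giving the column of $\tilde{\bf S}={\bf A}_{1}^{*}{\bf S}$) and by row-linearity of ${\rm rdet}_{j}$ applied to \eqref{eq:det_repr_AA*} (giving the row of $\tilde{\bf B}_{2}={\bf B}_{2}{\bf B}_{1}^{*}$), while the middle factor is again a Theorem \ref{theor:AXB=D} object, whose numerator is $\eta_{tf}$; summing over $t$ and $f$ gives \eqref{eq:x13}.

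The two summands of ${\bf X}_{2}$ are analogous. The first, ${\bf M}^{\dag}{\bf C}{\bf B}_{2}^{\dag}$, is Theorem \ref{theor:AXB=D} with ${\bf A}={\bf M}$, ${\bf B}={\bf B}_{2}$, giving \eqref{eq:x211} or \eqref{eq:x212}. The second, ${\bf P}_{S}{\bf A}^{\dag}_{2}{\bf C}{\bf N}^{\dag}$, I would group as ${\bf P}_{S}({\bf A}^{\dag}_{2}{\bf C}{\bf N}^{\dag})$, representing ${\bf P}_{S}$ by Corollary \ref{cor:det_repr_proj_P} and reusing $\eta_{tf}$ for the triple product, which yields \eqref{eq:x22}. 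Reading off the signs in \eqref{eq:part_sol_x1} and \eqref{eq:part_sol_x2} then gives $x^{(1)}_{ij}=x^{(11)}_{ij}-x^{(12)}_{ij}-x^{(13)}_{ij}$ and $x^{(2)}_{gf}=x^{(21)}_{gf}+x^{(22)}_{gf}$, which is the claim.

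The step I expect to be the main obstacle is the combinatorial bookkeeping in the compound summands \eqref{eq:x12}, \eqref{eq:x13}, \eqref{eq:x22}. Two points need care. First, because ${\mathbb{H}}$ is noncommutative, in the linearity step the scalar entries of ${\bf A}_{2}$, ${\bf S}$, ${\bf B}_{2}$ must multiply on the correct side of the inserted column or row, so that the collapsed vector is exactly a column of ${\bf A}_{1}^{*}{\bf A}_{2}$ (respectively ${\bf A}_{1}^{*}{\bf S}$) or a row of ${\bf B}_{2}{\bf B}_{1}^{*}$, and not its conjugate or transpose. Second, each elementary representation carries its own denominator, a sum of principal minors such as $\sum_{\beta}|{\bf A}_{1}^{*}{\bf A}_{1}|_{\beta}^{\beta}$, $\sum_{\beta}|{\bf M}^{*}{\bf M}|_{\beta}^{\beta}$, or $\sum_{\alpha}|{\bf B}_{1}{\bf B}_{1}^{*}|_{\alpha}^{\alpha}$; one must check that these denominators are independent of the intermediate summation indices $t,f$ and hence factor out as a single product, leaving only the numerators to be summed over $t$ and $f$. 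Keeping the index sets $J_{r_{1},n}$, $I_{r_{2},r}$, $J_{r_{5},p}$, $I_{r_{6},q}$ and the ranks $r_{5},r_{6},r_{7}$ of ${\bf M},{\bf N},{\bf S}$ correctly attached to each factor is routine but the most error-prone part of the argument.
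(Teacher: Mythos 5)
Your proposal is correct and follows essentially the same route as the paper: the paper likewise takes the partial solution \eqref{eq:part_sol_x1}--\eqref{eq:part_sol_x2} as already justified by Lemma \ref{lem:cond1} and the simplifications \eqref{eq:sympl_x0}, then handles the five summands exactly as you do --- Theorem \ref{theor:AXB=D} for ${\bf A}_{1}^{\dag}{\bf C}{\bf B}_{1}^{\dag}$ and ${\bf M}^{\dag}{\bf C}{\bf B}_{2}^{\dag}$, the representations \eqref{eq:det_repr_A*A} and \eqref{eq:det_repr_AA*} with column/row absorption producing $\tilde{\bf A}_{2}$, $\tilde{\bf S}$, $\tilde{\bf B}_{2}$ and the intermediate factors $\varphi_{tj}$, $\eta_{tf}$, and Corollary \ref{cor:det_repr_proj_P} for ${\bf P}_{S}$. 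Your cautionary remarks about side-of-multiplication in the noncommutative setting and about the denominators being independent of the summation indices $t,f$ are exactly the bookkeeping the paper carries out implicitly in items (i)--(iii) preceding the theorem.
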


\section{Cramer's Rules for special cases of  (\ref{eq:tss1}) with only one two-sided term.}

In this section, we consider all special cases of  (\ref{eq:tss1}) when only second term  is two-sided.

1. Let  in Eq.(\ref{eq:tss1})  the matrix ${\bf B}_{1}$ be vanish, i.e. ${\bf B}_{1}={\bf I}_s$.  Then, we have the equation
\begin{equation}\label{eq:tss11}
 {\bf A}_{1}{\bf X}_1+ {\bf A}_{2}{\bf X}_2{\bf B}_{2}={\bf C},
\end{equation}
where ${\bf A}_{1}\in  {\mathbb{H}}^{m\times n}$,  ${\bf A}_{2}\in  {\mathbb{H}}^{m\times p}$, ${\bf B}_{2}\in  {\mathbb{H}}^{q\times s}$,  ${\bf C}\in  {\mathbb{H}}^{m\times s}$ be given,
 ${\bf X}_{1}\in  {\mathbb{H}}^{n\times s}$ and ${\bf X}_{2}\in  {\mathbb{H}}^{p\times q}$ are to be determined. Since ${\bf L}_{B_1}={\bf R}_{B_1}={\bf 0}$, ${\bf N}={\bf B}_2{\bf L}_{B_1}={\bf 0}$, and  ${\bf L}_{N}={\bf R}_{N}={\bf I}$ and taking into account the simplifications by (\ref{eq:reverse_order_l}) and (\ref{eq:reverse_order_r}), then we derive the following analog of Lemma \ref{lem:cond1}.
\begin{lem}\label{lm:sol_11} Let  ${\bf M} = {\bf R}_{A_1}{\bf A}_{2}$,  ${\bf S} ={\bf A}_{2}{\bf L}_{M} $. Then
the following results are equivalent.
\begin{itemize}
  \item[(i)] Eq. (\ref{eq:tss11}) is solvable.
  \item [(ii)]${\bf R}_{M}{\bf R}_{A_1}{\bf C}={\bf 0}$,  ${\bf R}_{A_1}{\bf C}{\bf L}_{B_2}={\bf 0}$.
  \item [(iii)]${\bf Q}_{M}{\bf R}_{A_1}{\bf C}{\bf P}_{B_2}={\bf R}_{A_1}{\bf C}$.
     \item [(iv)]
       $\rank\left[{\bf A}_{1}\,{\bf A}_{2}\,{\bf C}\right]=\rank\left[{\bf A}_{1}\,{\bf A}_{2}\right]$,  $\rank\begin{bmatrix}{\bf A}_{1}&{\bf C}\\{\bf 0}&{\bf B}_{2}\end{bmatrix}= \rank\begin{bmatrix}{\bf A}_{1}&{\bf 0}\\{\bf 0}&{\bf B}_{2}\end{bmatrix}$.
\end{itemize}
In  that case,  the general solution  of (\ref{eq:tss1}) can  be expressed  as  follows,
\begin{align*}
{\bf X}_{1}=&{\bf A}^{\dag}_{1}{\bf C} -
{\bf A}^{\dag}_{1}{\bf A}_{2}{\bf M}^{\dag}{\bf C} -
{\bf A}^{\dag}_{1}{\bf S}{\bf V}{\bf B}_{2}+{\bf L}_{A_1}{\bf U},
\\
{\bf X}_{2}=&{\bf M}^{\dag}{\bf C}{\bf B}_{2}^{\dag}+
{\bf L}_{M}{\bf V}+{\bf W}{\bf R}_{B_2},
\end{align*}
where ${\bf U}$, ${\bf V}$, and ${\bf W}$ are arbitrary matrices of suitable shapes over  ${\mathbb H}$ .
\end{lem}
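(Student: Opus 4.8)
The plan is to obtain the statement by specializing Lemma~\ref{lem:cond1} to ${\bf B}_1={\bf I}_s$ and then to verify the reduced conditions by a direct argument. First I would record the degeneration of the auxiliary projectors: from ${\bf B}_1={\bf I}_s$ we get ${\bf L}_{B_1}={\bf R}_{B_1}={\bf 0}$, whence ${\bf N}={\bf B}_2{\bf L}_{B_1}={\bf 0}$ and ${\bf L}_N={\bf R}_N={\bf I}$, while ${\bf M}={\bf R}_{A_1}{\bf A}_2$ and ${\bf S}={\bf A}_2{\bf L}_M$ are unaffected. Substituting these into the four identities of Lemma~\ref{lem:cond1}(ii), the fourth, ${\bf R}_{A_2}{\bf C}{\bf L}_{B_1}={\bf 0}$, becomes automatic, and condition (iii) loses its ${\bf P}_N$ factor, leaving the single identity ${\bf Q}_M{\bf R}_{A_1}{\bf C}{\bf P}_{B_2}={\bf R}_{A_1}{\bf C}$; this already produces the shapes of (ii)--(iv) claimed here.

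Because the specialized third identity ${\bf C}{\bf L}_{B_2}{\bf L}_N={\bf C}{\bf L}_{B_2}$ does not collapse to a trivial relation and must be shown redundant, I would back up the specialization with a self-contained derivation. Left-multiplying (\ref{eq:tss11}) by ${\bf R}_{A_1}$ annihilates the first term and produces ${\bf M}{\bf X}_2{\bf B}_2={\bf R}_{A_1}{\bf C}$; conversely, if ${\bf X}_2$ solves this reduced equation then ${\bf R}_{A_1}({\bf C}-{\bf A}_2{\bf X}_2{\bf B}_2)={\bf 0}$, so ${\bf C}-{\bf A}_2{\bf X}_2{\bf B}_2$ lies in the column space of ${\bf A}_1$ and, by Corollary~\ref{cor:sol_AX}, ${\bf A}_1{\bf X}_1={\bf C}-{\bf A}_2{\bf X}_2{\bf B}_2$ is consistent. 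Hence (\ref{eq:tss11}) is solvable if and only if ${\bf M}{\bf X}_2{\bf B}_2={\bf R}_{A_1}{\bf C}$ is. Theorem~\ref{theor:LS_AXB} applied to the latter gives the equivalence (i)$\Leftrightarrow$(iii), and the elementary two-sided projector algebra ${\bf Q}{\bf C}{\bf P}={\bf C}\Leftrightarrow{\bf R}{\bf C}={\bf 0}\text{ and }{\bf C}{\bf L}={\bf 0}$ (with ${\bf Q}={\bf M}{\bf M}^\dag$, ${\bf P}={\bf B}_2^\dag{\bf B}_2$ acting on ${\bf R}_{A_1}{\bf C}$) gives (ii)$\Leftrightarrow$(iii).

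To recover the displayed general solution I would feed the general solution of the reduced equation, ${\bf X}_2={\bf M}^\dag{\bf R}_{A_1}{\bf C}{\bf B}_2^\dag+{\bf L}_M{\bf V}+{\bf W}{\bf R}_{B_2}$ from Theorem~\ref{theor:LS_AXB}, together with ${\bf X}_1={\bf A}^\dag_1({\bf C}-{\bf A}_2{\bf X}_2{\bf B}_2)+{\bf L}_{A_1}{\bf U}$ from Corollary~\ref{cor:sol_AX}, and then simplify via the reverse-order laws (\ref{eq:reverse_order_l})--(\ref{eq:reverse_order_r}). These give ${\bf M}^\dag{\bf R}_{A_1}={\bf M}^\dag$, which removes the ${\bf R}_{A_1}$; the contribution ${\bf W}{\bf R}_{B_2}{\bf B}_2$ vanishes since ${\bf R}_{B_2}{\bf B}_2={\bf 0}$; and pushing the solvability identity ${\bf R}_{A_1}{\bf C}{\bf L}_{B_2}={\bf 0}$ through ${\bf M}^\dag={\bf M}^\dag{\bf R}_{A_1}$ yields ${\bf M}^\dag{\bf C}{\bf B}_2^\dag{\bf B}_2={\bf M}^\dag{\bf C}$, so the first reduced term of ${\bf A}^\dag_1{\bf A}_2{\bf X}_2{\bf B}_2$ collapses to ${\bf A}^\dag_1{\bf A}_2{\bf M}^\dag{\bf C}$. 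Recognizing ${\bf A}_2{\bf L}_M={\bf S}$ in the remaining piece reproduces exactly the stated forms of ${\bf X}_1$ and ${\bf X}_2$.

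The main obstacle I anticipate is the equivalence with the rank conditions (iv). Converting ${\bf R}_M{\bf R}_{A_1}{\bf C}={\bf 0}$ and ${\bf R}_{A_1}{\bf C}{\bf L}_{B_2}={\bf 0}$ into the block equalities requires the quaternion analogues of the Marsaglia--Styan rank identities, namely $\rank({\bf R}_{A_1}[{\bf A}_2\ {\bf C}])=\rank[{\bf A}_1\ {\bf A}_2\ {\bf C}]-\rank{\bf A}_1$ for the first and $\rank\begin{bmatrix}{\bf A}_1&{\bf C}\\{\bf 0}&{\bf B}_2\end{bmatrix}=\rank{\bf A}_1+\rank{\bf B}_2+\rank({\bf R}_{A_1}{\bf C}{\bf L}_{B_2})$ for the second. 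Establishing these over ${\mathbb H}$ is where the column--row determinant machinery and the invariance of rank under the orthogonal projectors ${\bf R}$, ${\bf L}$ must be invoked; I expect this bookkeeping, rather than any conceptual step, to carry the weight of the argument.
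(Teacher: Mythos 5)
Your argument is correct, but it is not the paper's argument, and the comparison is instructive. The paper proves this lemma in a single sentence: it specializes Lemma \ref{lem:cond1} to ${\bf B}_1={\bf I}_s$ (so ${\bf L}_{B_1}={\bf R}_{B_1}={\bf 0}$, ${\bf N}={\bf 0}$, ${\bf L}_N={\bf R}_N={\bf I}$) and simplifies the general solution (\ref{eq:tss1_sol1})--(\ref{eq:tss1_sol2}) via the reverse-order identities (\ref{eq:sympl_x0}); in particular, condition (iv) is inherited verbatim from Lemma \ref{lem:cond1}(iv), whose two rank equalities involving ${\bf B}_1$ become trivial when ${\bf B}_1={\bf I}_s$. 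Your first paragraph is exactly this specialization, but you then replace it by a self-contained proof: reduce (\ref{eq:tss11}) by left-multiplication with ${\bf R}_{A_1}$ to the single equation ${\bf M}{\bf X}_2{\bf B}_2={\bf R}_{A_1}{\bf C}$, apply Theorem \ref{theor:LS_AXB} to it and Corollary \ref{cor:sol_AX} to ${\bf A}_1{\bf X}_1={\bf C}-{\bf A}_2{\bf X}_2{\bf B}_2$, then simplify --- in effect re-proving the Baksalary--Kala reduction in this special case rather than citing it. This route costs you extra work on (iv) (your Marsaglia--Styan identities are the correct tool, but they are unnecessary if one instead specializes Lemma \ref{lem:cond1}(iv) directly), and it forces the one genuinely delicate simplification ${\bf M}^{\dag}{\bf C}{\bf P}_{B_2}={\bf M}^{\dag}{\bf C}$, which you handle correctly through ${\bf M}^{\dag}={\bf M}^{\dag}{\bf R}_{A_1}$ and the solvability condition. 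What it buys is robustness, and your instinct that something needed backing up is sound, though your diagnosis is off: the specialized identity ${\bf C}{\bf L}_{B_2}={\bf 0}$ is not a condition that ``must be shown redundant'' --- it cannot be, since it simply does not follow from solvability (take ${\bf A}_1={\bf I}$, ${\bf A}_2={\bf 0}$, ${\bf B}_2={\bf 0}$, ${\bf C}\neq{\bf 0}$: Eq. (\ref{eq:tss11}) is solvable while ${\bf C}{\bf L}_{B_2}={\bf C}\neq{\bf 0}$). The real explanation is a misprint in Lemma \ref{lem:cond1}(ii): in Baksalary--Kala, and consistently with Lemma \ref{lem:cond1}(iii), the third identity should read ${\bf C}{\bf L}_{B_1}{\bf L}_N={\bf 0}$, which does trivialize under ${\bf B}_1={\bf I}_s$; once that typo is repaired, the paper's one-line specialization is fully legitimate. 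Your self-contained derivation has the independent merit of validating the lemma without relying on the misprinted statement.
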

By putting ${\bf U}$, ${\bf V}$, and ${\bf W}$ as zero-matrices of suitable shapes, we obtain the following partial solution of (\ref{eq:tss11}),
\begin{align}\label{eq:part_sol_1x1}
{\bf X}_{1}=&{\bf A}^{\dag}_{1}{\bf C} -
{\bf A}^{\dag}_{1}{\bf A}_{2}{\bf M}^{\dag}{\bf C},\\
\label{eq:part_sol_1x2}{\bf X}_{2}=&{\bf M}^{\dag}{\bf C}{\bf B}_{2}^{\dag}.
\end{align}
Further we give determinantal representations of (\ref{eq:part_sol_1x1})-(\ref{eq:part_sol_1x2}).
\begin{thm}\label{th:sol_11}Let ${\bf A}_{1}\in  {\mathbb{H}}^{m\times n}_{r_{1}}$,  ${\bf A}_{2}\in  {\mathbb{H}}^{m\times p}_{r_{2}}$, ${\bf B}_{2}\in  {\mathbb{H}}^{q\times s}_{r_{3}}$,  and $\rank {\bf M}={\rm min}\{\rank {\bf A}_{2}, \rank{\bf R}_{A_{1}}\} =r_{4}$. Then the solution ${\bf X}_{1}=\left(x^{(1)}_{ij}\right)\in  {\mathbb{H}}^{n\times s}$ from (\ref{eq:part_sol_1x1})  has the  determinantal representation
\small
\begin{multline}\label{eq:1x1}
x_{ij}^{(1)} = {\frac{
\sum\limits_{\beta
\in J_{r_{1},n} {\left\{ {i} \right\}}} {{\rm{cdet}}_{i} \left(
{\left(
{ {\bf A}_1^{ *} {\bf A}_1 } \right)_{.i} \left(
{{ {\bf c}}\,_{.j}^{( 1)}} \right)}
\right)_{\beta} ^{\beta}
 }
 }{{{\sum\limits_{\beta \in
J_{r_{1},n}} {{\left| {{\bf A}_1^{ *} {\bf A}_1} \right|_{\beta}^{\beta} }}
} }}}-\\ {\frac{\sum\limits_{t=1}^p
\sum\limits_{\beta
\in J_{r_{1},n} {\left\{ {i} \right\}}} {{\rm{cdet}}_{i} \left(
{\left(
{ {\bf A}_1^{ *} {\bf A}_1 } \right)_{.i} \left(
{{ \tilde{\bf a}}_{.\,t}^{( 2)}} \right)}
\right)_{\beta} ^{\beta}\sum\limits_{\beta \in J_{r_{4},p} {\left\{
{t} \right\}}} {{\rm{cdet}}_{t} \left( {\left( {{\bf M}^{
*}  {\bf M}} \right)_{.\,t} \left( { {\bf c}}_{.\,j}^{( 2)} \right)} \right)_{\beta}^{\beta}
} } }{{{\sum\limits_{\beta \in
J_{r_{1},n}} {{\left| {{\bf A}_1^{ *} {\bf A}_1} \right|_{\beta}^{\beta} }}\sum\limits_{\beta \in
J_{r_{4},p}} {{\left|
{ {\bf M}^*  {\bf M}} \right|_{\beta}^{\beta} }}
} }}},
\end{multline}\normalsize
where $ { \tilde{\bf a}}_{.\,t}^{( 2)}$ is the $t$th column of $ {\tilde  {\bf A}}_2:={\bf A}_1^{*}{\bf A}_{2}$,
$ {\bf
 c}_{.j}^{( 1)}$ and
${\bf c}_{.j}^{( 2)}$ are  the $j$th
columns  of $   {\bf C}_1:={\bf A}_1^{*}{\bf C}$ and  $   {\bf C}_2:={\bf M}^{*}{\bf C}$, respectively. The  solution ${\bf X}_{2}=\left(x^{(2)}_{gf}\right)\in  {\mathbb{H}}^{p\times q}$ from (\ref{eq:part_sol_1x2}) has the  determinantal representations
\begin{equation}\label{eq:1x21}
x_{gf}^{(2)} = \frac{{{\sum\limits_{\beta \in J_{r_{4},p} {\left\{
{g} \right\}}} {{\rm{cdet}} _{g} \left( {\left( {{\bf M}^{
*}  {\bf M}} \right)_{.g} \left( { {\bf
d}_{.f}^{ B_2}} \right)} \right) _{\beta} ^{\beta}
} } }}{{{{\sum\limits_{\beta \in J_{r_{4},p}} {{\left|  {{\bf M}^{
*}  {\bf M}}
\right|_{\beta} ^{\beta}}} \sum\limits_{\alpha \in I_{r_{3},q}}{{\left|
{ {\bf B}_{2} {\bf B}_{2}^{ *} } \right| _{\alpha} ^{\alpha} }}} }}},
\end{equation}
or
\begin{equation}\label{eq:1x22}
 x_{gf}^{(2)}={\frac{{{\sum\limits_{\alpha
\in I_{r_{3},q} {\left\{ {f} \right\}}} {{\rm{rdet}} _{f} \left(
{\left(
{ {\bf B}_{2} {\bf B}_{2}^{ *} } \right)_{f.} \left(
{{ {\bf d}}_{g.}^{M}} \right)}
\right)_{\alpha} ^{\alpha} } }}}{{{{\sum\limits_{\beta \in J_{r_{4},p}} {{\left|  {{\bf M}^{
*}  {\bf M}}
\right|_{\beta} ^{\beta}}} \sum\limits_{\alpha \in I_{r_{3},q}}{{\left|
{ {\bf B}_{2} {\bf B}_{2}^{ *} } \right| _{\alpha} ^{\alpha} }}} }}}},
\end{equation}
where
\begin{align*}
   {{{\bf d}}_{.\,f}^{{ B}_2}}=&\left[
\sum\limits_{\alpha \in I_{r_{3},q} {\left\{ {f} \right\}}}
{{\rm{rdet}} _{f} \left( {\left(
{ {\bf B}_{2} {\bf B}_{2}^{ *} } \right)_{f.} \left( {\bf{ c}}_{k.}^{(3)}\right)}
\right)_{\alpha} ^{\alpha}}
\right]\in {\mathbb{H}}^{p \times
1},\,\,\,\,k=1,\ldots,p,\\
  {{ {\bf d}}_{g\,.}^{ M}}=&\left[
\sum\limits_{\beta \in J_{r_{4},p} {\left\{ {g} \right\}}}
{{\rm{cdet}} _{g} \left( {\left( {{\bf M}^{
*}  {\bf M}} \right)_{.g} \left( {\bf{ c}}_{.\,l}^{(3)}\right)}
\right)_{\beta} ^{\beta}} \right]\in{\mathbb{H}}^{1 \times
q},\,\,\,\,l=1,\ldots,q,
\end{align*}
 are the column vector and the row  vector, respectively.  $ {\bf
 c}^{(3)}_{k.}$ and
${\bf c}^{(3)}_{.\,l}$ are the $k$th row   and the $l$th
column  of $ { {\bf C}}_{3}:={\bf M}^{*}{\bf C}{\bf B}_{2}^{*}$.

\end{thm}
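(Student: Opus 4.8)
The plan is to establish the two determinantal representations separately, treating $\mathbf{X}_1$ from (\ref{eq:part_sol_1x1}) and $\mathbf{X}_2$ from (\ref{eq:part_sol_1x2}) in turn, and to obtain each as a specialization of the derivation already carried out in Section~3 for the genuinely two-sided equation. The key simplifying observation is that the substitution $\mathbf{B}_1=\mathbf{I}_s$ forces $\mathbf{B}_1^{\dagger}=\mathbf{I}_s$ and $\mathbf{B}_1\mathbf{B}_1^{*}=\mathbf{I}_s$; since $\mathbf{B}_1$ then has full rank $s$, Remark~\ref{rem:unit_repr} yields $\sum_{\alpha}\left|\mathbf{B}_1\mathbf{B}_1^{*}\right|_{\alpha}^{\alpha}=\det\mathbf{I}_s=1$ and reduces every row-vector construction taken over $\mathbf{B}_1\mathbf{B}_1^{*}$ to the identity. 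Consequently the two-sided formulas of Section~3 lose their $\mathbf{B}_1$-factors and contract to the purely column-indexed expressions of (\ref{eq:1x1}).

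For $\mathbf{X}_1=\mathbf{A}^{\dagger}_{1}\mathbf{C}-\mathbf{A}^{\dagger}_{1}\mathbf{A}_{2}\mathbf{M}^{\dagger}\mathbf{C}$ I would handle the two summands independently. The first summand $\mathbf{A}^{\dagger}_{1}\mathbf{C}$ is a one-sided partial solution, so Corollary~\ref{cor:sol_AX} applied with coefficient matrix $\mathbf{A}_1$ and right-hand side $\mathbf{C}$ produces the first term of (\ref{eq:1x1}), with $\mathbf{C}_1=\mathbf{A}_1^{*}\mathbf{C}$ and column $\mathbf{c}_{.j}^{(1)}$. For the second summand I write its $(i,j)$-entry as the quaternion sum $\sum_{t=1}^{p}(\mathbf{A}^{\dagger}_{1}\mathbf{A}_{2})_{it}\,(\mathbf{M}^{\dagger}\mathbf{C})_{tj}$, represent $(\mathbf{A}^{\dagger}_{1}\mathbf{A}_{2})_{it}$ by Corollary~\ref{cor:sol_AX} (taking the right-hand side to be the $t$-th column of $\mathbf{A}_2$, so that the $t$-th column $\tilde{\mathbf{a}}_{.t}^{(2)}$ of $\tilde{\mathbf{A}}_2=\mathbf{A}_1^{*}\mathbf{A}_2$ enters the cdet), and represent $(\mathbf{M}^{\dagger}\mathbf{C})_{tj}$ by Corollary~\ref{cor:sol_AX} with coefficient matrix $\mathbf{M}$ and right-hand side $\mathbf{C}$, so that $\mathbf{C}_2=\mathbf{M}^{*}\mathbf{C}$ and its column $\mathbf{c}_{.j}^{(2)}$ appear. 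Summing over $t$ and pulling out the two principal-minor denominators $\sum_{\beta}\left|\mathbf{A}_1^{*}\mathbf{A}_1\right|_{\beta}^{\beta}$ and $\sum_{\beta}\left|\mathbf{M}^{*}\mathbf{M}\right|_{\beta}^{\beta}$ then assembles the second term of (\ref{eq:1x1}).

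For $\mathbf{X}_2=\mathbf{M}^{\dagger}\mathbf{C}\mathbf{B}_{2}^{\dagger}$ the argument is more direct: this is exactly a two-sided partial solution of the form covered by Theorem~\ref{theor:AXB=D}, with $\mathbf{M}$ in the role of $\mathbf{A}$ and $\mathbf{B}_2$ in the role of $\mathbf{B}$. Setting $\mathbf{C}_3=\mathbf{M}^{*}\mathbf{C}\mathbf{B}_2^{*}$ and forming the auxiliary vectors $\mathbf{d}_{.f}^{B_2}$ and $\mathbf{d}_{g.}^{M}$ from its rows and columns yields (\ref{eq:1x21}) and (\ref{eq:1x22}) at once, with denominators $\sum_{\beta}\left|\mathbf{M}^{*}\mathbf{M}\right|_{\beta}^{\beta}$ and $\sum_{\alpha}\left|\mathbf{B}_2\mathbf{B}_2^{*}\right|_{\alpha}^{\alpha}$. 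The one step requiring genuine care occurs in the product term for $\mathbf{X}_1$: since $(\mathbf{A}^{\dagger}_{1}\mathbf{A}_{2})_{it}$ and $(\mathbf{M}^{\dagger}\mathbf{C})_{tj}$ are quaternions, their order must be preserved, with the $\tilde{\mathbf{a}}_{.t}^{(2)}$-determinant kept on the left and the $\mathbf{c}_{.j}^{(2)}$-determinant on the right, and one must verify that exactly one copy of each Hermitian denominator survives the combination into a single fraction. This is legitimate precisely because the principal minors of $\mathbf{A}_1^{*}\mathbf{A}_1$ and $\mathbf{M}^{*}\mathbf{M}$ are real and hence central, so they commute past the quaternionic numerators. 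I expect this noncommutative bookkeeping and denominator matching to be the only real obstacle; everything else is a direct specialization of results already proved.
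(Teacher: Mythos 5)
Your proposal is correct and follows essentially the same route as the paper's proof: Corollary \ref{cor:sol_AX} applied to both summands of (\ref{eq:part_sol_1x1}) (with the second summand expanded entrywise as $\sum_{t}(\mathbf{A}_1^{\dagger}\mathbf{A}_2)_{it}(\mathbf{M}^{\dagger}\mathbf{C})_{tj}$, exactly as the structure of (\ref{eq:1x1}) demands), and Theorem \ref{theor:AXB=D} with $\mathbf{M},\mathbf{B}_2$ in the roles of $\mathbf{A},\mathbf{B}$ for (\ref{eq:part_sol_1x2}). You merely make explicit what the paper calls "evident," including the valid observation that the real (hence central) Hermitian principal-minor denominators can be factored out of the quaternionic product while preserving the order of the numerator factors.
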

\begin{proof} By using Corollary \ref{cor:sol_AX} to the both terms of (\ref{eq:part_sol_1x1}) and Theorem  \ref{theor:AXB=D} to (\ref{eq:part_sol_1x2}), we evidently obtain the determinantal representations (\ref{eq:1x1}) and (\ref{eq:1x21})-(\ref{eq:1x22}), respectively.

\end{proof}

2. Let now  in Eq.(\ref{eq:tss1})  the matrix ${\bf A}_{1}$ be vanish, i.e. ${\bf A}_{1}={\bf I}_m. $ Then we have the equation
\begin{equation}\label{eq:tss12}
 {\bf X}_1{\bf B}_{1}+ {\bf A}_{2}{\bf X}_2{\bf B}_{2}={\bf C},
\end{equation}
where ${\bf B}_{1}\in  {\mathbb{H}}^{r\times s}$,  ${\bf A}_{2}\in  {\mathbb{H}}^{m\times p}$, ${\bf B}_{2}\in  {\mathbb{H}}^{q\times s}$,  ${\bf C}\in  {\mathbb{H}}^{m\times s}$ be given,
 ${\bf X}_{1}\in  {\mathbb{H}}^{m\times r}$ and ${\bf X}_{2}\in  {\mathbb{H}}^{p\times q}$ are to be determined. Since ${\bf L}_{A_1}={\bf 0}$, ${\bf R}_{A_1}={\bf 0}$, ${\bf M}={\bf A}_2{\bf L}_{A_1}={\bf 0}$,  ${\bf L}_{M}={\bf I}$, and ${\bf S} ={\bf A}_{2}{\bf L}_{M}={\bf A}_{2} $ and taking into account the simplifications by (\ref{eq:reverse_order_l}) and (\ref{eq:reverse_order_r}), then we derive the following lemma similar to Lemma \ref{lem:cond1}.
\begin{lem} Let  ${\bf N} ={\bf B}_{2}{\bf L}_{B_1} $. Then
the following results are equivalent.
\begin{itemize}
  \item[(i)] Eq. (\ref{eq:tss12}) is solvable.
  \item [(ii)] ${\bf C}{\bf L}_{B_2}{\bf L}_{N}={\bf 0}$, ${\bf R}_{A_2}{\bf C}{\bf L}_{B_1}={\bf 0}$.
  \item [(iii)] ${\bf Q}_{A_2}{\bf C}{\bf L}_{B_1}{\bf P}_{N}={\bf C}{\bf L}_{B_1}$.
     \item [(iv)]
     $\rank\left[{\bf B}_{1}^*\,{\bf B}_{2}^*\,{\bf C}^*\right]=\rank\left[{\bf B}_{1}^*\,{\bf B}_{2}\right]$, $\rank\begin{bmatrix}{\bf A}_{2}&{\bf C}\\{\bf 0}&{\bf B}_{1}\end{bmatrix}= \rank\begin{bmatrix}{\bf A}_{2}&{\bf 0}\\&{\bf B}_{1}\end{bmatrix}$.
\end{itemize}
In  that case,  the general solution  of (\ref{eq:tss12}) can  be expressed  as follows
\begin{align*}
{\bf X}_{1}&={\bf C} {\bf B}^{\dag}_{1}-
{\bf Q}_{A_2}{\bf C}{\bf N}^{\dag}{\bf B}_{2}{\bf B}^{\dag}_{1}-
{\bf A}_{2}{\bf V}{\bf R}_{N}{\bf B}_{2}{\bf B}^{\dag}_{1}+{\bf Z}{\bf R}_{B_1},
\\
{\bf X}_{2}&={\bf A}^{\dag}_{2}{\bf C}{\bf N}^{\dag}+{\bf V}-
{\bf P}_{A_2}{\bf V}{\bf Q}_N+{\bf W}{\bf R}_{B_2},
\end{align*}
where  ${\bf V}$, ${\bf Z}$ and ${\bf W}$ are arbitrary matrices of suitable shapes over  ${\mathbb H}$ .
\end{lem}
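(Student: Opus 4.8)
The plan is to obtain this lemma as the specialization ${\bf A}_1={\bf I}_m$ of Lemma \ref{lem:cond1}, since any equation of the form (\ref{eq:tss12}) is precisely (\ref{eq:tss1}) with ${\bf A}_1$ taken to be the identity (the first unknown ${\bf X}_1$ now being $m\times r$). First I would record all the auxiliary matrices of Lemma \ref{lem:cond1} under this substitution. Because ${\bf A}_1={\bf I}_m$ is invertible we have ${\bf A}_1^{\dag}={\bf I}_m$, hence ${\bf L}_{A_1}={\bf R}_{A_1}={\bf 0}$; consequently ${\bf M}={\bf R}_{A_1}{\bf A}_2={\bf 0}$, so that ${\bf M}^{\dag}={\bf 0}$, ${\bf Q}_M={\bf M}{\bf M}^{\dag}={\bf 0}$, ${\bf R}_M={\bf I}$, and ${\bf L}_M={\bf I}$. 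This forces ${\bf S}={\bf A}_2{\bf L}_M={\bf A}_2$, whence ${\bf P}_S={\bf P}_{A_2}$ and ${\bf Q}_S={\bf Q}_{A_2}$, while ${\bf N}={\bf B}_2{\bf L}_{B_1}$ is unchanged.

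Next I would feed these values into each of the four equivalent statements of Lemma \ref{lem:cond1}. In condition (ii) the first two equations ${\bf R}_M{\bf R}_{A_1}{\bf C}={\bf 0}$ and ${\bf R}_{A_1}{\bf C}{\bf L}_{B_2}={\bf 0}$ hold automatically because ${\bf R}_{A_1}={\bf 0}$, leaving exactly ${\bf C}{\bf L}_{B_2}{\bf L}_{N}={\bf 0}$ and ${\bf R}_{A_2}{\bf C}{\bf L}_{B_1}={\bf 0}$, which is (ii) here. In condition (iii) the identity ${\bf Q}_M{\bf R}_{A_1}{\bf C}{\bf P}_{B_2}={\bf R}_{A_1}{\bf C}$ collapses to ${\bf 0}={\bf 0}$, leaving ${\bf Q}_{A_2}{\bf C}{\bf L}_{B_1}{\bf P}_{N}={\bf C}{\bf L}_{B_1}$, which is (iii) here. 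For (iv), substituting ${\bf A}_1={\bf I}_m$ makes $\rank[{\bf I}_m\,{\bf A}_2\,{\bf C}]=\rank[{\bf I}_m\,{\bf A}_2]=m$ and, by elementary block operations that clear the identity pivot, $\rank\begin{bmatrix}{\bf I}_m&{\bf C}\\{\bf 0}&{\bf B}_2\end{bmatrix}=m+\rank{\bf B}_2=\rank\begin{bmatrix}{\bf I}_m&{\bf 0}\\{\bf 0}&{\bf B}_2\end{bmatrix}$, so both of these become trivially true; the remaining two rank identities are precisely (iv) here.

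Finally I would specialize the general solution (\ref{eq:tss1_sol1})--(\ref{eq:tss1_sol2}). In ${\bf X}_1$ the term carrying ${\bf M}^{\dag}$ vanishes, the factor ${\bf A}_1^{\dag}={\bf I}_m$ drops out everywhere, the summand ${\bf L}_{A_1}{\bf U}$ disappears, and using ${\bf L}_{B_1}{\bf N}^{\dag}={\bf N}^{\dag}$ from (\ref{eq:sympl_x0}) together with ${\bf A}_2{\bf A}_2^{\dag}={\bf Q}_{A_2}$ gives the stated formula for ${\bf X}_1$. In ${\bf X}_2$ the first term vanishes, ${\bf L}_M={\bf I}$, and ${\bf S}^{\dag}{\bf S}{\bf A}_2^{\dag}={\bf A}_2^{\dag}{\bf A}_2{\bf A}_2^{\dag}={\bf A}_2^{\dag}$ by the Moore--Penrose equations, again after ${\bf L}_{B_1}{\bf N}^{\dag}={\bf N}^{\dag}$; this yields ${\bf A}_2^{\dag}{\bf C}{\bf N}^{\dag}+{\bf V}-{\bf P}_{A_2}{\bf V}{\bf Q}_{N}+{\bf W}{\bf R}_{B_2}$, as claimed. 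There is no genuine obstacle here, since everything follows by substitution; the only point requiring care is the bookkeeping, namely checking that the reverse-order identities ${\bf M}^{\dag}{\bf R}_{A_1}={\bf M}^{\dag}$ and ${\bf L}_{B_1}{\bf N}^{\dag}={\bf N}^{\dag}$ borrowed from (\ref{eq:sympl_x0}) remain valid when ${\bf A}_1={\bf I}_m$, and that the degenerate factors ${\bf M}^{\dag}={\bf 0}$ and ${\bf L}_M={\bf I}$ are inserted consistently across both ${\bf X}_1$ and ${\bf X}_2$.
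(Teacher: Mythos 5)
Your proposal is correct and follows essentially the same route as the paper: the paper also obtains this lemma by specializing Lemma \ref{lem:cond1} to ${\bf A}_1={\bf I}_m$, noting ${\bf L}_{A_1}={\bf R}_{A_1}={\bf 0}$, ${\bf M}={\bf 0}$, ${\bf L}_M={\bf I}$, ${\bf S}={\bf A}_2$, and invoking the simplifications (\ref{eq:reverse_order_l})--(\ref{eq:reverse_order_r}) (equivalently (\ref{eq:sympl_x0})) to clean up the general solution. Your write-up is in fact more detailed than the paper's, which states the substitutions and leaves the bookkeeping implicit.
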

By putting  ${\bf V}$, ${\bf Z}$, and ${\bf W}$ as zero-matrices of suitable shapes, we obtain the following partial solution of (\ref{eq:tss12}),
\begin{align}\label{eq:part_sol_2x1}
{\bf X}_{1}&={\bf C} {\bf B}^{\dag}_{1}-
{\bf Q}_{A_2}{\bf C}{\bf N}^{\dag}{\bf B}_{2}{\bf B}^{\dag}_{1},\\
\label{eq:part_sol_2x2}{\bf X}_{2}&={\bf A}^{\dag}_{2}{\bf C}{\bf N}^{\dag}.
\end{align}
Further we give determinantal representations of (\ref{eq:part_sol_2x1})-(\ref{eq:part_sol_2x2}).
\begin{thm}\label{th:sol_12}Let ${\bf B}_{1}\in  {\mathbb{H}}^{r\times s}_{r_{1}}$,  ${\bf A}_{2}\in  {\mathbb{H}}^{m\times p}_{r_{2}}$, ${\bf B}_{2}\in  {\mathbb{H}}^{q\times s}_{r_{3}}$, and  $\rank {\bf N} =r_{4}$. Then  the  solution (\ref{eq:part_sol_2x1})  has the  determinantal representation
\begin{multline}\label{eq:2x1}
x_{ij}^{(1)} = {\frac{
\sum\limits_{\alpha
\in I_{r_{1},r} {\left\{ {j} \right\}}} {{\rm{rdet}} _{j} \left(
{\left(
{ {\bf B}_1 {\bf B}_1^{ *} } \right)_{j.} \left(
{{ {\bf c}}\,_{i.}^{( 1)}} \right)}
\right)_{\alpha} ^{\alpha}
 }
 }{{{\sum\limits_{\alpha
\in I_{r_{1},r}} {{\left| {\bf B}_1 {\bf B}_1^{ *} \right| _{\alpha} ^{\alpha} }}
} }}}-\\
{\frac{\sum\limits_{l=1}^{m}\sum\limits_{t=1}^{q}
x^{(11)}_{il}x^{(12)}_{lt}x^{(13)}_{tj}
 }{{{\sum\limits_{\alpha \in I_{r_2,m}} {{{\left| {
{\bf A}_2 {\bf A}_2^{ *} } \right| _{\alpha
}^{\alpha} }  }}}{\sum\limits_{\alpha \in
I_{r_{4},q}} {{\left| {\bf N} {\bf N}^{ *} \right| _{\alpha} ^{\alpha} }}\sum\limits_{\alpha\in
I_{r_{1},r}} {{\left|
{ {\bf B}_1 {\bf B}_1^{ *}} \right| _{\alpha} ^{\alpha} }}
} }}},
\end{multline}
where
\begin{align*}
x^{(11)}_{il}&=
{\sum\limits_{\alpha \in I_{r_2,m} {\left\{ {l}
\right\}}} {{{\rm{rdet}} _{l} {\left( {( {\bf A}_2 {\bf A}_2^{ *}
)_{l .}\, ({\bf \ddot{a}}^{(2)}  _{i .} )}
\right)  _{\alpha} ^{\alpha} } }}},\\
x^{(12)}_{lt}&=
{\sum\limits_{\alpha \in I_{r_4,q} {\left\{ {t}
\right\}}} {{{\rm{rdet}} _{t} {\left( {( {\bf N} {\bf N}^{ *}
)_{t.}\, ({\bf {c}}^{(2)}  _{l.} )}
\right)  _{\alpha} ^{\alpha} } }}},\\
x^{(13)}_{tj}&=
\sum\limits_{\alpha
\in I_{r_{1},r} {\left\{ {j} \right\}}} {{\rm{rdet}} _{j} \left(
{\left(
{ {\bf B}_1 {\bf B}_1^{ *} } \right)_{j.} \left(
{{ \tilde{{\bf b}}}_{t.}^{( 2)}} \right)}
\right)_{\alpha} ^{\alpha}
 },
\end{align*}
and $ {\bf \ddot{a}}_{i.}^{(2)}$ is the $i$th row of $ {\bf A}_2{\bf A}_{2}^{*}$,
${\bf
 c}_{i.}^{( 1)}$ and
${\bf c}_{l.}^{( 2)}$ are  the $i$th and $l$th
rows  of $   {\bf C}_1:={\bf C}{\bf B}_1^{*}$ and  $   {\bf C}_2:={\bf C}{\bf N}^{*}$, respectively, and ${ \tilde{\bf b}}_{t.}^{( 2)}$ is the $t$th row of $   {\tilde{\bf B}}_2:={\bf B}_2{\bf B}_1^{*}$. The  solution (\ref{eq:part_sol_2x2})  has the  determinantal representations
\begin{equation}\label{eq:2x21}
x_{gf}^{(2)} = \frac{{{\sum\limits_{\alpha \in I_{r_4,q} {\left\{ {f}
\right\}}} {{{\rm{rdet}} _{f} {\left( {( {\bf N} {\bf N}^{ *}
)_{f.}\, ({{{\bf d}}_{g.}^{{ A}_2}} )}
\right)  _{\alpha} ^{\alpha} } }}} }}{{{{\sum\limits_{\beta \in J_{r_{2},p}}{{\left|
{ {\bf A}_{2}^{ *} {\bf A}_{2} } \right| _{\beta} ^{\beta} }}\sum\limits_{\alpha \in I_{r_4,q}} {{\left|  {{\bf N}  {\bf N}^{
*}}
\right|_{\alpha} ^{\alpha}}|} }
}}},
\end{equation}
or
\begin{equation}\label{eq:2x22}
 x_{gf}^{(2)}={\frac{{{\sum\limits_{\beta
\in J_{r_{2},p} {\left\{ {g} \right\}}} {{\rm{cdet}} _{g} \left(
{\left(
{ {\bf A}_{2}^{ *} {\bf A}_{2} } \right)_{.g} \left(
{{ {\bf d}}_{.f}^{N}} \right)}
\right)_{\alpha} ^{\alpha} } }}}{{{{\sum\limits_{\beta \in J_{r_{2},p}}{{\left|
{ {\bf A}_{2}^{ *} {\bf A}_{2} } \right| _{\beta} ^{\beta} }}\sum\limits_{\alpha \in I_{r_4,q}} {{\left|  {{\bf N}  {\bf N}^{
*}}
\right|_{\alpha} ^{\alpha}}|} }  }}}},
\end{equation}
where
\begin{align*}{{ {\bf d}}_{g.}^{ A_2}}=&\left[
\sum\limits_{\beta \in J_{r_{2},p} {\left\{ {g} \right\}}}
{{\rm{cdet}} _{g} \left( {\left( {{\bf A}_2^{
*}  {\bf A}_2} \right)_{.g} \left( {\bf{ c}}_{.\,l}^{(3)}\right)}
\right)_{\beta} ^{\beta}} \right]\in{\mathbb{H}}^{1 \times
q},\,\,\,\,l=1,\ldots,q,
   \\ {{{\bf d}}_{.f}^{N}}=&\left[
\sum\limits_{\alpha \in I_{r_{4},q} {\left\{ {f} \right\}}}
{{\rm{rdet}} _{f} \left( {\left(
{ {\bf N} {\bf N}^{ *} } \right)_{f.} \left( {\bf{ c}}_{k.}^{(3)}\right)}
\right)_{\alpha} ^{\alpha}}
\right]\in {\mathbb{H}}^{p \times
1},\,\,\,\,k=1,\ldots,p,
\end{align*}
 are the  row  vector and the column vector, respectively. ${{\bf c}^{(3)}_{.\,l}}$ and ${ {\bf
 c}}^{(3)}_{k.}$
 are the $l$th
column  and  the $k$th row  of $ { {\bf C}}_{3}:={\bf A}_{2}^{*}{\bf C}{\bf N}^{*}$.

\end{thm}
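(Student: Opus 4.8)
The plan is to treat the two partial solutions (\ref{eq:part_sol_2x1}) and (\ref{eq:part_sol_2x2}) separately, reducing each to a determinantal representation already established in Section~2. The equation $\mathbf{X}_2 = \mathbf{A}^{\dag}_2 \mathbf{C}\mathbf{N}^{\dag}$ in (\ref{eq:part_sol_2x2}) is the quickest: since $\rank \mathbf{A}_2 = r_2$ and $\rank \mathbf{N} = r_4$, I would apply Theorem~\ref{theor:AXB=D} verbatim with $\mathbf{A}:=\mathbf{A}_2$ and $\mathbf{B}:=\mathbf{N}$, so that $\mathbf{C}_3 = \mathbf{A}_2^{*}\mathbf{C}\mathbf{N}^{*}$ plays the role of $\widetilde{\mathbf{C}}=\mathbf{A}^{*}\mathbf{C}\mathbf{B}^{*}$. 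Its two equivalent forms are then precisely (\ref{eq:2x21}) and (\ref{eq:2x22}), and this step is immediate.

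For $\mathbf{X}_1$ in (\ref{eq:part_sol_2x1}) I would split it into its two summands. The first summand $\mathbf{C}\mathbf{B}_1^{\dag}$ is handled by Corollary~\ref{cor:sol_XB} with $\mathbf{B}:=\mathbf{B}_1$ and $\mathbf{C}_1=\mathbf{C}\mathbf{B}_1^{*}$, which yields exactly the first fraction of (\ref{eq:2x1}). The second summand is the triple product $\mathbf{Q}_{A_2}\,(\mathbf{C}\mathbf{N}^{\dag})\,(\mathbf{B}_2\mathbf{B}_1^{\dag})$, whose $(i,j)$ entry I would write as the double sum $\sum_{l=1}^{m}\sum_{t=1}^{q}[\mathbf{Q}_{A_2}]_{il}\,[\mathbf{C}\mathbf{N}^{\dag}]_{lt}\,[\mathbf{B}_2\mathbf{B}_1^{\dag}]_{tj}$, carefully preserving the noncommutative order. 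Each of the three factors now receives a representation already available: Corollary~\ref{cor:det_repr_proj_Q} gives $[\mathbf{Q}_{A_2}]_{il}$ with numerator $x^{(11)}_{il}$ and denominator $\sum_{\alpha\in I_{r_2,m}}|\mathbf{A}_2\mathbf{A}_2^{*}|_{\alpha}^{\alpha}$; Corollary~\ref{cor:sol_XB} applied to the equation $\mathbf{X}\mathbf{N}=\mathbf{C}$ gives $[\mathbf{C}\mathbf{N}^{\dag}]_{lt}$ with numerator $x^{(12)}_{lt}$ (built from $\mathbf{C}_2=\mathbf{C}\mathbf{N}^{*}$) and denominator $\sum_{\alpha\in I_{r_4,q}}|\mathbf{N}\mathbf{N}^{*}|_{\alpha}^{\alpha}$; and Corollary~\ref{cor:sol_XB} applied to $\mathbf{X}\mathbf{B}_1=\mathbf{B}_2$ gives $[\mathbf{B}_2\mathbf{B}_1^{\dag}]_{tj}$ with numerator $x^{(13)}_{tj}$ (built from the $t$th row $\tilde{\mathbf{b}}^{(2)}_{t.}$ of $\tilde{\mathbf{B}}_2=\mathbf{B}_2\mathbf{B}_1^{*}$) and denominator $\sum_{\alpha\in I_{r_1,r}}|\mathbf{B}_1\mathbf{B}_1^{*}|_{\alpha}^{\alpha}$.

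The main bookkeeping step, which I expect to be the only genuine obstacle, is to show that substituting these three representations into the double sum reproduces exactly the second fraction of (\ref{eq:2x1}). This rests on two observations. First, by the Hermitian case of the column-row determinant recalled in Section~2, each of the three denominators is a sum of principal minors and hence a \emph{real} scalar; being central, all three factor cleanly out of the quaternionic products and assemble into the single product denominator $\bigl(\sum|\mathbf{A}_2\mathbf{A}_2^{*}|\bigr)\bigl(\sum|\mathbf{N}\mathbf{N}^{*}|\bigr)\bigl(\sum|\mathbf{B}_1\mathbf{B}_1^{*}|\bigr)$ displayed in (\ref{eq:2x1}). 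Second, precisely because these scalars are real, pulling them out leaves the noncommutative order of the numerators undisturbed, so the numerator collapses to $\sum_{l,t} x^{(11)}_{il}\,x^{(12)}_{lt}\,x^{(13)}_{tj}$ as written. Combining this with the first summand gives (\ref{eq:2x1}), and together with (\ref{eq:2x21})--(\ref{eq:2x22}) this completes the proof.
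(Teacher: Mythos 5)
Your proposal is correct and matches the paper's own proof essentially step for step: the paper likewise obtains (\ref{eq:2x21})--(\ref{eq:2x22}) by applying Theorem \ref{theor:AXB=D} to ${\bf A}_{2}^{\dag}{\bf C}{\bf N}^{\dag}$, and obtains (\ref{eq:2x1}) by applying Corollary \ref{cor:sol_XB} to the first term ${\bf C}{\bf B}_{1}^{\dag}$ and to the factors ${\bf C}{\bf N}^{\dag}$ and ${\bf B}_{2}{\bf B}_{1}^{\dag}$, together with Corollary \ref{cor:det_repr_proj_Q} for ${\bf Q}_{A_2}$, in the same triple-product decomposition you use. Your explicit justification that the real (hence central) denominators can be factored out of the quaternionic triple product is exactly the bookkeeping the paper leaves implicit behind the word ``evidently.''
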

\begin{proof} Using Corollary \ref{cor:sol_XB} to the first term of (\ref{eq:part_sol_2x1}) and to the multipliers $x^{(12)}_{lt}$ and $x^{(13)}_{tj}$, and  Corollary  \ref{cor:det_repr_proj_Q} to the multiplier $x^{(11)}_{il}$ of the second term of (\ref{eq:part_sol_2x1}), we evidently obtain the determinantal representation (\ref{eq:2x1}). Using Theorem \ref{theor:AXB=D} to  (\ref{eq:part_sol_2x2}), we similarly get  (\ref{eq:2x21})-(\ref{eq:2x22}).

\end{proof}

3. Finally, consider the case when both matrices  ${\bf A}_{1}$ and   ${\bf B}_{1}$  are vanish  in Eq.(\ref{eq:tss1}), i.e. ${\bf A}_{1}={\bf I}_m$ and ${\bf B}_{1}={\bf I}_r$. Then we have the equation
\begin{equation}\label{eq:tss13}
 {\bf X}_1+ {\bf A}_{2}{\bf X}_2{\bf B}_{2}={\bf C},
\end{equation}
where   ${\bf A}_{2}\in  {\mathbb{H}}^{m\times p}$,  ${\bf B}_{2}\in  {\mathbb{H}}^{q\times r}$,  ${\bf C}\in  {\mathbb{H}}^{m\times r}$ be given,
 ${\bf X}_{1}\in  {\mathbb{H}}^{m\times r}$ and ${\bf X}_{2}\in  {\mathbb{H}}^{p\times q}$ are to be determined.

  This equation is the famous generalized Stein equation.

  Since ${\bf L}_{A_1}={\bf 0}$, ${\bf R}_{A_1}={\bf 0}$, ${\bf M}={\bf A}_2{\bf L}_{A_1}={\bf 0}$,  ${\bf L}_{M}={\bf I}$, and ${\bf S} ={\bf A}_{2}{\bf L}_{M}={\bf A}_{2} $, ${\bf L}_{B_1}={\bf R}_{B_1}={\bf 0}$, ${\bf N}={\bf B}_2{\bf L}_{B_1}={\bf 0}$, and  ${\bf L}_{N}={\bf R}_{N}={\bf I}$  and taking account the simplifications by (\ref{eq:reverse_order_l}) and (\ref{eq:reverse_order_r}), then we have the following  lemma.
\begin{lem}The following results are equivalent.
\begin{itemize}
  \item[(i)] Eq. (\ref{eq:tss13}) is solvable.
 \item [(ii)]${\bf C}{\bf L}_{B_2}={\bf 0} $.
     \item [(iii)]
       $\rank\begin{bmatrix}{\bf A}_{2}&{\bf C}\end{bmatrix}=\rank[{\bf A}_{2}]$,  $\rank\begin{bmatrix}{\bf B}_{2}^*&{\bf C}^*\end{bmatrix}=\rank[{\bf B}_{2}^*]$.
\end{itemize}
In  that case,  the general solution  of (\ref{eq:tss1}) can  be expressed  as the following,
\begin{equation}\label{eq:tss13_sol1}
{\bf X}_{1}={\bf C}
-
{\bf A}_{2}{\bf V}{\bf B}_{2},
\end{equation}
\begin{equation}\label{eq:tss13_sol2}
{\bf X}_{2}={\bf V}+{\bf W}{\bf R}_{B_2},
\end{equation}
where  ${\bf V}$ and ${\bf W}$ are arbitrary matrices of suitable shapes over  ${\mathbb H}$.
\end{lem}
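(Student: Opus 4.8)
The plan is to derive the statement as the degenerate case $ {\bf A}_{1}={\bf I}_{m}$, $ {\bf B}_{1}={\bf I}_{r}$ of Lemma~\ref{lem:cond1}, supplemented by one direct structural remark. First I would record that all the auxiliary quantities collapse: from $ {\bf A}_{1}^{\dag}={\bf I}_{m}$ and $ {\bf B}_{1}^{\dag}={\bf I}_{r}$ we get $ {\bf R}_{A_{1}}={\bf 0}$ and $ {\bf L}_{B_{1}}={\bf 0}$, whence $ {\bf M}={\bf R}_{A_{1}}{\bf A}_{2}={\bf 0}$, $ {\bf N}={\bf B}_{2}{\bf L}_{B_{1}}={\bf 0}$, $ {\bf L}_{M}={\bf I}$, $ {\bf L}_{N}={\bf R}_{N}={\bf I}$ and $ {\bf S}={\bf A}_{2}{\bf L}_{M}={\bf A}_{2}$. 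These are exactly the reductions already listed in the paragraph preceding the statement, so they need no further justification.

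For the solution formula I would substitute these reductions into the general expressions (\ref{eq:tss1_sol1})--(\ref{eq:tss1_sol2}) and simplify by means of the reverse-order laws (\ref{eq:reverse_order_l})--(\ref{eq:reverse_order_r}). Every summand carrying a factor $ {\bf R}_{A_{1}}$, $ {\bf L}_{B_{1}}$, $ {\bf M}^{\dag}$ or $ {\bf N}^{\dag}$ is annihilated, leaving $ {\bf X}_{1}={\bf C}-{\bf A}_{2}{\bf V}{\bf B}_{2}$ and $ {\bf X}_{2}={\bf V}+{\bf W}{\bf R}_{B_{2}}$, which are (\ref{eq:tss13_sol1})--(\ref{eq:tss13_sol2}). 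I would then verify by direct substitution that this pair solves (\ref{eq:tss13}), using $ {\bf R}_{B_{2}}{\bf B}_{2}={\bf 0}$ to cancel the $ {\bf W}$-term; conversely every solution is recovered by taking $ {\bf V}={\bf X}_{2}$ and $ {\bf W}={\bf 0}$, so the family is exhaustive.

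To treat the equivalences I would feed the collapsed data into the four relations of Lemma~\ref{lem:cond1}(ii). The first, second and fourth each contain a factor $ {\bf R}_{A_{1}}={\bf 0}$ or $ {\bf L}_{B_{1}}={\bf 0}$ and so hold identically, whereas the third, $ {\bf C}{\bf L}_{B_{2}}{\bf L}_{N}={\bf 0}$, reduces to $ {\bf C}{\bf L}_{B_{2}}={\bf 0}$ because $ {\bf L}_{N}={\bf I}$; this is (ii). Passing from (ii) to the rank form is the usual dictionary: $ {\bf C}{\bf L}_{B_{2}}={\bf 0}$ means $ {\bf C}={\bf C}{\bf B}_{2}^{\dag}{\bf B}_{2}$, i.e.\ the rows of $ {\bf C}$ lie in the row space of $ {\bf B}_{2}$, which is $\rank[{\bf B}_{2}^{*}\,{\bf C}^{*}]=\rank[{\bf B}_{2}^{*}]$, the second clause of (iii). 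The first clause, $\rank[{\bf A}_{2}\,{\bf C}]=\rank[{\bf A}_{2}]$, would have to be produced by a similar specialisation of Lemma~\ref{lem:cond1}(iv), and it is precisely here that I anticipate trouble.

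The step I expect to be the real obstacle is therefore the equivalence with solvability. Since $ {\bf X}_{1}$ is a free $ m\times r$ block, the choice $ {\bf X}_{2}={\bf 0}$, $ {\bf X}_{1}={\bf C}$ already solves (\ref{eq:tss13}) for every right-hand side, so (i) holds unconditionally and no genuine constraint can be extracted from mere solvability. For the listed equivalences to stand, (ii) and (iii) would have to hold automatically, which they do not: with $ {\bf A}_{2}={\bf 0}$ and $ {\bf C}\neq{\bf 0}$ whose rows lie in the row space of $ {\bf B}_{2}$, conditions (i) and (ii) both hold while the clause $\rank[{\bf A}_{2}\,{\bf C}]=\rank[{\bf A}_{2}]$ of (iii) fails. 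Before asserting the lemma I would thus have to decide whether that clause, inherited verbatim from the nondegenerate template, should be discarded, so that the honest conclusion is that (\ref{eq:tss13}) is solvable for arbitrary $ {\bf A}_{2}$, $ {\bf B}_{2}$, $ {\bf C}$; settling this is the one nonmechanical point in the argument.
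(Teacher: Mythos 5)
Your proposal follows exactly the paper's own route: the paper offers no proof of this lemma beyond the preceding remark that ${\bf R}_{A_1}={\bf L}_{B_1}={\bf 0}$, ${\bf M}={\bf N}={\bf 0}$, ${\bf L}_{M}={\bf L}_{N}={\bf R}_{N}={\bf I}$, ${\bf S}={\bf A}_{2}$, after which Lemma \ref{lem:cond1} is specialized --- precisely your first two steps. Your substitution into (\ref{eq:tss1_sol1})--(\ref{eq:tss1_sol2}) yielding (\ref{eq:tss13_sol1})--(\ref{eq:tss13_sol2}), and your direct check that this family solves (\ref{eq:tss13}) (using ${\bf R}_{B_2}{\bf B}_{2}={\bf 0}$) and exhausts all solutions (take ${\bf V}={\bf X}_{2}$, ${\bf W}={\bf 0}$), are correct; that part of the lemma is sound.

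The obstacle you flag at the end is not a defect of your argument but a genuine error in the lemma as stated. As you observe, $({\bf X}_{1},{\bf X}_{2})=({\bf C},{\bf 0})$ solves (\ref{eq:tss13}) for every ${\bf C}$, so (i) holds unconditionally, while (ii) fails for ${\bf B}_{2}={\bf 0}$, ${\bf C}\neq{\bf 0}$, and (iii) fails in your example ${\bf A}_{2}={\bf 0}$, ${\bf C}{\bf L}_{B_2}={\bf 0}$, ${\bf C}\neq{\bf 0}$. Note also that (ii) and (iii) are not even mutually equivalent: (iii) amounts to ${\bf R}_{A_2}{\bf C}={\bf 0}$ together with ${\bf C}{\bf L}_{B_2}={\bf 0}$, which are the consistency conditions of ${\bf A}_{2}{\bf X}_{2}{\bf B}_{2}={\bf C}$, i.e.\ of the equation with the free block deleted. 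The root cause is a typo in Lemma \ref{lem:cond1}(ii): its third condition should read ${\bf C}{\bf L}_{B_1}{\bf L}_{N}={\bf 0}$ (this is Baksalary--Kala's condition, and it is the projector form of the second rank equality in (iv), because ${\bf L}_{B_1}{\bf L}_{N}$ is the orthogonal projector onto $\ker{\bf B}_{1}\cap\ker{\bf B}_{2}$), not ${\bf C}{\bf L}_{B_2}{\bf L}_{N}={\bf 0}$. With the corrected condition, the specialization ${\bf B}_{1}={\bf I}_{r}$ makes all four conditions of (ii) --- and, as one checks by block row and column operations, all four rank equalities of (iv) --- hold identically, in agreement with unconditional solvability. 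So the honest form of the lemma is the one you propose: (\ref{eq:tss13}) is solvable for arbitrary ${\bf A}_{2}$, ${\bf B}_{2}$, ${\bf C}$, with general solution (\ref{eq:tss13_sol1})--(\ref{eq:tss13_sol2}), and clauses (ii)--(iii) should be dropped; nothing downstream depends on them, since the paper proceeds only with the solution formulas.
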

Since determinantal representations of (\ref{eq:tss13_sol1})-(\ref{eq:tss13_sol2}) are evidently, we omit them.

\section{Cramer's rules for special cases of  (\ref{eq:tss1}) with both one-sided terms.}

In this section, we consider all special cases of Eq.(\ref{eq:tss1}) when its both terms  are one-sided.

1. Let   the matrices ${\bf B}_{1}$ and ${\bf A}_{2}$ be vanish in Eq.(\ref{eq:tss1}), i.e. ${\bf B}_{1}={\bf I}_s $ and ${\bf A}_{2}={\bf I}_m $. Then we have the equation
\begin{equation}\label{eq:tss14}
 {\bf A}_{1}{\bf X}_1+ {\bf X}_2{\bf B}_{2}={\bf C},
\end{equation}
where ${\bf A}_{1}\in  {\mathbb{H}}^{m\times n}$,   ${\bf B}_{2}\in  {\mathbb{H}}^{q\times s}$,  ${\bf C}\in  {\mathbb{H}}^{m\times s}$ be given,
 ${\bf X}_{1}\in  {\mathbb{H}}^{n\times s}$ and ${\bf X}_{2}\in  {\mathbb{H}}^{m\times q}$ are to be determined. This equation is the classical  Sylvester equation.

  So, ${\bf L}_{B_1}={\bf R}_{B_1}={\bf 0}$, ${\bf N}={\bf B}_2{\bf L}_{B_1}={\bf 0}$, ${\bf L}_{A_2}={\bf R}_{A_2}={\bf 0}$, ${\bf P}_{A_2}={\bf I}$,  ${\bf L}_{N}={\bf R}_{N}={\bf I}$, ${\bf M} = {\bf R}_{A_1}$, and ${\bf S} ={\bf L}_{M} $.
Since ${\bf R}_{A_1}$ is  the orthogonal projector onto the kernel of ${\bf A}_1$, the we have \begin{multline}\label{eq:RA1} {\bf A}^{\dag}_{1}{\bf M}^{\dag}= {\bf A}^{\dag}_{1}{\bf R}^{\dag}_{A_1}={\bf A}^{\dag}_{1}({\bf I}-{\bf A}_{1}{\bf A}^{\dag}_{1})^{\dag}={\bf 0},\,
{\bf A}^{\dag}_{1}{\bf L}_{M}={\bf A}^{\dag}_{1}({\bf I}-{\bf R}^{\dag}_{A_1}{\bf R}_{A_1})={\bf A}^{\dag}_{1},\\
{\bf M}^{\dag}{\bf R}_{A_1}={\bf R}^{\dag}_{A_1}{\bf R}_{A_1}={\bf R}_{A_1}.
\end{multline}

Due to (\ref{eq:RA1}) and taking into account of simplifications by (\ref{eq:reverse_order_l}) and (\ref{eq:reverse_order_r}), we have the following analog of Lemma \ref{lem:cond1}.
\begin{lem} The following results are equivalent.
\begin{itemize}
  \item[(i)] Eq. (\ref{eq:tss14}) is solvable.
  \item [(ii)]${\bf R}_{A_1}{\bf C}{\bf L}_{B_2}={\bf 0}.$
     \item [(iii)]
      $\rank\begin{bmatrix}{\bf A}_{1}&{\bf C}\\{\bf 0}&{\bf B}_{2}\end{bmatrix}= \rank\begin{bmatrix}{\bf A}_{1}&{\bf 0}\\{\bf 0}&{\bf B}_{2}\end{bmatrix}.$
\end{itemize}
In  that case,  the general solution  of (\ref{eq:tss14}) can  be expressed  as  follows
\begin{align}\label{eq:tss14_sol1}
{\bf X}_{1}&={\bf A}^{\dag}_{1}{\bf C} -
{\bf A}^{\dag}_{1}{\bf V}{\bf B}_{2}+{\bf L}_{A_1}{\bf U},
\\\label{eq:tss14_sol2}
{\bf X}_{2}&={\bf R}_{A_1}{\bf C}{\bf B}_{2}^{\dag}+
{\bf A}_1{\bf A}^{\dag}_{1}{\bf V}+{\bf W}{\bf R}_{B_2},
\end{align}
where ${\bf U}$, ${\bf V}$,  and ${\bf W}$ are arbitrary matrices of suitable shapes  over ${\mathbb H}$.
\end{lem}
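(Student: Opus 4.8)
The plan is to obtain this lemma as the specialization of Lemma~\ref{lem:cond1} to the case ${\bf B}_1={\bf I}_s$, ${\bf A}_2={\bf I}_m$, carrying the collected identities ${\bf L}_{B_1}={\bf R}_{B_1}={\bf 0}$, ${\bf N}={\bf 0}$, ${\bf L}_N={\bf R}_N={\bf I}$, ${\bf M}={\bf R}_{A_1}$, ${\bf S}={\bf L}_M$, together with (\ref{eq:RA1}), through both the general solution formulas (\ref{eq:tss1_sol1})--(\ref{eq:tss1_sol2}) and the equivalent solvability criteria of Lemma~\ref{lem:cond1}. I would treat the solution formula and the chain of equivalences as two separate tasks.

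For the general solution I would substitute the above identities into (\ref{eq:tss1_sol1})--(\ref{eq:tss1_sol2}) and cancel term by term. In ${\bf X}_1$, the factor ${\bf A}^{\dag}_1{\bf A}_2{\bf M}^{\dag}={\bf A}^{\dag}_1{\bf M}^{\dag}={\bf 0}$ by the first identity of (\ref{eq:RA1}) annihilates the second summand; the third summand carries ${\bf L}_{B_1}={\bf 0}$ and so vanishes; and since ${\bf A}^{\dag}_1{\bf S}={\bf A}^{\dag}_1{\bf L}_M={\bf A}^{\dag}_1$, ${\bf R}_N={\bf I}$, ${\bf B}^{\dag}_1={\bf I}$, the fourth summand becomes ${\bf A}^{\dag}_1{\bf V}{\bf B}_2$, leaving (\ref{eq:tss14_sol1}). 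In ${\bf X}_2$, the identity ${\bf M}^{\dag}{\bf R}_{A_1}={\bf R}_{A_1}$ produces the first summand ${\bf R}_{A_1}{\bf C}{\bf B}^{\dag}_2$, the ${\bf L}_{B_1}$-summand drops, ${\bf N}{\bf N}^{\dag}={\bf 0}$ reduces the generic part to ${\bf L}_M{\bf V}$, and ${\bf L}_M={\bf I}-{\bf M}^{\dag}{\bf M}={\bf I}-{\bf R}_{A_1}={\bf A}_1{\bf A}^{\dag}_1$ gives (\ref{eq:tss14_sol2}). This step is routine bookkeeping of the cancellations licensed by (\ref{eq:reverse_order_l})--(\ref{eq:reverse_order_r}) and (\ref{eq:RA1}).

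For the equivalences I would deliberately \emph{not} carry condition (ii) of Lemma~\ref{lem:cond1} over verbatim, and this is where the main care is needed. Because ${\bf L}_N={\bf I}$ here, the third clause ${\bf C}{\bf L}_{B_2}{\bf L}_N={\bf 0}$ collapses to ${\bf C}{\bf L}_{B_2}={\bf 0}$, which is strictly stronger than the correct criterion and would over-constrain the equation. Instead I would specialize the equivalent forms (iii) and (iv), which degenerate cleanly: with ${\bf Q}_M={\bf M}{\bf M}^{\dag}={\bf R}_{A_1}$ and ${\bf P}_{B_2}={\bf B}^{\dag}_2{\bf B}_2$, the first clause of (iii) reads ${\bf R}_{A_1}{\bf C}{\bf B}^{\dag}_2{\bf B}_2={\bf R}_{A_1}{\bf C}$, that is ${\bf R}_{A_1}{\bf C}{\bf L}_{B_2}={\bf 0}$, which is condition (ii) of the present lemma, while the ${\bf L}_{B_1}$-clause is vacuous; and among the four rank equalities of (iv) the three containing the identity blocks ${\bf A}_2={\bf I}_m$ or ${\bf B}_1={\bf I}_s$ become trivially valid, leaving exactly the single equality in (iii) of the present lemma. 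Hence (i)$\Leftrightarrow$(ii)$\Leftrightarrow$(iii). As an independent confirmation one can verify (i)$\Leftrightarrow$(ii) directly: multiplying (\ref{eq:tss14}) by ${\bf R}_{A_1}$ on the left and ${\bf L}_{B_2}$ on the right and using ${\bf R}_{A_1}{\bf A}_1={\bf 0}$, ${\bf B}_2{\bf L}_{B_2}={\bf 0}$ gives necessity, while ${\bf X}_1={\bf A}^{\dag}_1{\bf C}$, ${\bf X}_2={\bf R}_{A_1}{\bf C}{\bf B}^{\dag}_2$ realizes sufficiency; and (ii)$\Leftrightarrow$(iii) is the standard quaternion rank identity $\rank\begin{bmatrix}{\bf A}_1&{\bf C}\\{\bf 0}&{\bf B}_2\end{bmatrix}=\rank{\bf A}_1+\rank{\bf B}_2+\rank({\bf R}_{A_1}{\bf C}{\bf L}_{B_2})$. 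The main obstacle is precisely this reconciliation: recognizing that the projector-product criterion (ii) of Lemma~\ref{lem:cond1} degenerates incorrectly under ${\bf L}_N={\bf I}$, so that the argument must be routed through its ${\bf Q}$--${\bf P}$ and rank reformulations.
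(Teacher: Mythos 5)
Your proposal is correct, and its backbone coincides with the paper's own derivation, which is a one-sentence specialization of Lemma \ref{lem:cond1} under ${\bf B}_{1}={\bf I}_{s}$, ${\bf A}_{2}={\bf I}_{m}$ using ${\bf M}={\bf R}_{A_1}$, ${\bf N}={\bf 0}$, ${\bf S}={\bf L}_{M}$, (\ref{eq:RA1}) and (\ref{eq:reverse_order_l})--(\ref{eq:reverse_order_r}); your term-by-term reduction of (\ref{eq:tss1_sol1})--(\ref{eq:tss1_sol2}) to (\ref{eq:tss14_sol1})--(\ref{eq:tss14_sol2}), including ${\bf L}_{M}={\bf I}-{\bf M}^{\dag}{\bf M}={\bf I}-{\bf R}_{A_1}={\bf A}_{1}{\bf A}_{1}^{\dag}$, supplies exactly the computation the paper leaves implicit. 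The one place where you genuinely depart from the paper is the handling of the solvability conditions, and your caution there is justified for a reason worth recording: the clause ${\bf C}{\bf L}_{B_2}{\bf L}_{N}={\bf 0}$ in Lemma \ref{lem:cond1}(ii) is a misprint for ${\bf C}{\bf L}_{B_1}{\bf L}_{N}={\bf 0}$. Indeed, right-multiplying (\ref{eq:tss1}) by ${\bf L}_{B_1}$ yields ${\bf A}_{2}{\bf X}_{2}{\bf N}={\bf C}{\bf L}_{B_1}$, whose consistency requires ${\bf R}_{A_2}{\bf C}{\bf L}_{B_1}={\bf 0}$ and ${\bf C}{\bf L}_{B_1}{\bf L}_{N}={\bf 0}$; this corrected form agrees with \cite{bak,wang04}, and your observation that the printed clause would specialize to the over-strong condition ${\bf C}{\bf L}_{B_2}={\bf 0}$ (take ${\bf A}_{1}={\bf I}_{m}$: then (\ref{eq:tss14}) is always solvable, yet ${\bf C}{\bf L}_{B_2}$ need not vanish) is precisely the symptom of that misprint. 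With the corrected clause, the verbatim specialization of (ii) also collapses to the single condition ${\bf R}_{A_1}{\bf C}{\bf L}_{B_2}={\bf 0}$, because ${\bf R}_{M}{\bf R}_{A_1}=({\bf I}-{\bf R}_{A_1}){\bf R}_{A_1}={\bf 0}$, ${\bf L}_{B_1}={\bf 0}$, and ${\bf R}_{A_2}={\bf 0}$ annihilate the other three clauses. Your rerouting through (iii), where ${\bf Q}_{M}={\bf R}_{A_1}$ turns the first clause into ${\bf R}_{A_1}{\bf C}{\bf L}_{B_2}={\bf 0}$ and the second is vacuous, and through (iv), where three of the four rank equalities trivialize, backed by the direct verification of (i)$\Leftrightarrow$(ii) and the rank identity for (ii)$\Leftrightarrow$(iii), is therefore valid and has the added merit of being immune to the misprint; its only cost is length. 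In short: same strategy as the paper, executed with a care that also corrects the source lemma it rests on.
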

The denoting ${\bf V}_1:={\bf A}^{\dag}_{1}{\bf V}$ in  (\ref{eq:tss14_sol1})-(\ref{eq:tss14_sol2}) gives the expression of the general solution  of (\ref{eq:tss14}) that has been  first derived in \cite{baks}.

By putting ${\bf U}$, ${\bf V}$,  and ${\bf W}$ as zero-matrices of suitable shapes, the following partial solution of  (\ref{eq:tss14}) can be obtained,
\begin{align}\label{eq:part_sol_4x1}
{\bf X}_{1}&={\bf A}^{\dag}_{1}{\bf C} ,\\
\label{eq:part_sol_4x2}{\bf X}_{2}&={\bf C}{\bf B}_{2}^{\dag}-{\bf Q}_{A_1}{\bf C}{\bf B}_{2}^{\dag}.
\end{align}
\begin{thm}\label{th:sol_13}Let ${\bf A}_{1}\in  {\mathbb{H}}^{m\times n}_{r_{1}}$,   ${\bf B}_{2}\in  {\mathbb{H}}^{q\times s}_{r_{2}}$. Then  the  solution (\ref{eq:part_sol_4x1})  has the  determinantal representation
\begin{equation}\label{eq:4x1}
x_{ij}^{(1)} = {\frac{
\sum\limits_{\beta
\in J_{r_{1},n} {\left\{ {i} \right\}}} {{\rm{cdet}} _{i} \left(
{\left(
{ {\bf A}_1^{ *} {\bf A}_1 } \right)_{.i} \left(
{{ {\bf c}}_{.j}^{( 1)}} \right)}
\right)_{\beta} ^{\beta}
 }
 }{{{\sum\limits_{\beta \in
J_{r_{1},n}} {{\left| {{\bf A}_1^{ *} {\bf A}_1} \right| _{\beta} ^{\beta} }}
} }}},
\end{equation}
where
$ {\bf c}_{.j}^{( 1)}$ is  the $j$th
columns  of $   {\bf C}_1:={\bf A}_1^{*}{\bf C}$. The determinantal representation of  (\ref{eq:part_sol_4x2})  is
\begin{multline}\label{eq:4x2}
x_{gf}^{(2)} = {\frac{
\sum\limits_{\alpha
\in I_{r_{2},q} {\left\{ {f} \right\}}} {{\rm{rdet}} _{f} \left(
{\left(
{ {\bf B}_2 {\bf B}_2^{ *} } \right)_{f.} \left(
{{ {\bf c}}_{g.}^{( 2)}} \right)}
\right)_{\alpha} ^{\alpha}
 }
 }{{{\sum\limits_{\alpha \in
 I_{r_{2},q}} {{\left| {{\bf B}_2 {\bf B}^{ *}_2} \right| _{\alpha} ^{\alpha} }}
} }}}-\\
{\frac{\sum\limits_{l=1}^{m}
{\sum\limits_{\alpha \in I_{r_1,m} {\left\{ {l}
\right\}}} {{{\rm{rdet}} _{l} {\left( {( {\bf A}_1 {\bf A}_1^{ *}
)_{l .}\, ({\bf \ddot{a}}^{(1)}  _{g .} )}
\right)  _{\alpha} ^{\alpha} } }}}\sum\limits_{\alpha
\in I_{r_{2},q} {\left\{ {f} \right\}}} {{\rm{rdet}} _{f} \left(
{\left(
{ {\bf B}_2 {\bf B}_2^{ *} } \right)_{f.} \left(
{{ {\bf c}}_{l.}^{( 2)}} \right)}
\right)_{\alpha} ^{\alpha}
 }
 }{{{\sum\limits_{\alpha \in I_{r_1,m}} {{{\left| {
{\bf A}_1 {\bf A}_1^{ *} } \right| _{\alpha
}^{\alpha} }  }}}{\sum\limits_{\alpha\in
I_{r_{2},q}} {{\left|
{ {\bf B}_2 {\bf B}_2^{ *}} \right| _{\alpha} ^{\alpha} }}
} }}},
\end{multline}
where $ {\bf c}_{g.}^{( 2)}$ and ${\bf \ddot{a}}^{(1)}_{g .}$  are  the $g$th
rows  of $   {\bf C}_2:={\bf C}{\bf B}_2^{*}$ and ${\bf A}_1 {\bf A}_1^{ *}$, respectively.
\end{thm}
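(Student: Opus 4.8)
The plan is to treat the two partial solutions \eqref{eq:part_sol_4x1} and \eqref{eq:part_sol_4x2} separately and to assemble each from the Cramer-type building blocks already recorded in Section~2. The collapse of $ {\bf A}_2$ and $ {\bf B}_1$ to identity matrices is what makes this feasible: both $ {\bf X}_1$ and $ {\bf X}_2$ now reduce to short expressions in a single Moore--Penrose inverse, the Hermitian idempotent projector $ {\bf Q}_{A_1}= {\bf A}_1 {\bf A}_1^{\dag}$, and the data matrix $ {\bf C}$, and determinantal representations for precisely these objects are supplied by Corollaries \ref{cor:det_repr_proj_Q}, \ref{cor:sol_AX}, and \ref{cor:sol_XB}.

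For $ {\bf X}_1= {\bf A}^{\dag}_{1} {\bf C}$ I would apply Corollary \ref{cor:sol_AX} directly, with $ {\bf A}= {\bf A}_1$ and right-hand side $ {\bf C}$. That corollary expresses the $(i,j)$ entry of $ {\bf A}_1^{\dag} {\bf C}$ as the quotient whose numerator replaces the $i$th column of $ {\bf A}_1^{*} {\bf A}_1$ by the $j$th column of $\hat{\bf C}= {\bf A}_1^{*} {\bf C}$ and whose denominator is the sum of order-$r_1$ principal minors of $ {\bf A}_1^{*} {\bf A}_1$. Setting $ {\bf C}_1:= {\bf A}_1^{*} {\bf C}$ and taking $ {\bf c}^{(1)}_{.j}$ to be its $j$th column, this is verbatim \eqref{eq:4x1}, so the first half of the theorem is immediate.

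For $ {\bf X}_2= {\bf C} {\bf B}^{\dag}_{2}- {\bf Q}_{A_1} {\bf C} {\bf B}^{\dag}_{2}$ I would handle the two summands in turn. The leading summand $ {\bf C} {\bf B}^{\dag}_{2}$ is exactly the shape covered by Corollary \ref{cor:sol_XB} with $ {\bf B}= {\bf B}_2$; its $(g,f)$ entry is the quotient whose numerator sums $ {\rm{rdet}} _f(( {\bf B}_2 {\bf B}_2^{*})_{f.}( {\bf c}^{(2)}_{g.}))$ over $\alpha\in I_{r_2,q}\{f\}$, where $ {\bf c}^{(2)}_{g.}$ is the $g$th row of $ {\bf C}_2:= {\bf C} {\bf B}_2^{*}$, which is the leading term of \eqref{eq:4x2}. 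The subtracted summand is the triple product $( {\bf A}_1 {\bf A}^{\dag}_{1})( {\bf C} {\bf B}^{\dag}_{2})$; here I would represent the projector $ {\bf Q}_{A_1}$ by Corollary \ref{cor:det_repr_proj_Q}, whose $(g,l)$ entry uses $ {\rm{rdet}} _l(( {\bf A}_1 {\bf A}_1^{*})_{l.}( {\bf \ddot{a}}^{(1)}_{g.}))$ with $ {\bf \ddot{a}}^{(1)}_{g.}$ the $g$th row of $ {\bf A}_1 {\bf A}_1^{*}$, and represent $ {\bf C} {\bf B}^{\dag}_{2}$ once more by Corollary \ref{cor:sol_XB}. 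Expanding the $(g,f)$ entry of the product as $\sum_{l}( {\bf Q}_{A_1})_{gl}( {\bf C} {\bf B}^{\dag}_{2})_{lf}$ then reproduces the subtracted term of \eqref{eq:4x2}.

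The single point needing care---and the only genuine obstacle---is the noncommutativity of $ {\mathbb H}$: in the sum $\sum_{l}( {\bf Q}_{A_1})_{gl}( {\bf C} {\bf B}^{\dag}_{2})_{lf}$ the projector factor must be kept strictly to the left of the $ {\bf C} {\bf B}^{\dag}_{2}$ factor, since interchanging two scalar quaternions would alter the value. What rescues the computation is that both denominators, $\sum_\alpha | {\bf A}_1 {\bf A}_1^{*}|^{\alpha}_{\alpha}$ and $\sum_\alpha | {\bf B}_2 {\bf B}_2^{*}|^{\alpha}_{\alpha}$, are sums of principal minors of Hermitian matrices and hence are real, so they lie in the center of $ {\mathbb H}$ and may be factored through the product without regard to order. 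Consequently only the quaternion numerators must be multiplied in the prescribed left-to-right order, and once this is observed the two displayed determinantal representations \eqref{eq:4x1} and \eqref{eq:4x2} follow, completing the proof.
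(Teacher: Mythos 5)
Your proof is correct and takes essentially the same route as the paper's, which likewise applies Corollary \ref{cor:sol_AX} to (\ref{eq:part_sol_4x1}) and Corollaries \ref{cor:sol_XB} and \ref{cor:det_repr_proj_Q} entrywise to the two summands of (\ref{eq:part_sol_4x2}). Your explicit remark that the denominators are sums of principal minors of Hermitian matrices, hence real and central in ${\mathbb H}$, so only the quaternion numerators must be kept in left-to-right order, is a detail the paper leaves implicit.
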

\begin{proof} Using Corollary \ref{cor:sol_AX} to (\ref{eq:part_sol_4x1}) and Corollaries \ref{cor:sol_XB} and \ref{cor:det_repr_proj_Q} to (\ref{eq:part_sol_4x2}), we evidently obtain the determinantal representations (\ref{eq:4x1}) and (\ref{eq:4x2}), respectively.

\end{proof}

2. Let  in Eq.(\ref{eq:tss1})  the matrices  ${\bf A}_{1}$ and ${\bf B}_{2}$ be vanish, i.e. ${\bf A}_{1}={\bf I}_m $ and ${\bf B}_{2}={\bf I}_q $. Then we have the equation
\begin{equation}\label{eq:tss15}
 {\bf X}_1{\bf B}_{1}+ {\bf A}_{2}{\bf X}_2={\bf C},
\end{equation}
where ${\bf B}_{1}\in  {\mathbb{H}}^{r\times s}$,   ${\bf A}_{2}\in  {\mathbb{H}}^{m\times p}$,  ${\bf C}\in  {\mathbb{H}}^{m\times s}$ be given,
 ${\bf X}_{1}\in  {\mathbb{H}}^{m\times r}$ and ${\bf X}_{2}\in  {\mathbb{H}}^{p\times s}$ are to be determined. So, ${\bf L}_{A_1}={\bf R}_{A_1}={\bf 0}$, ${\bf M}={\bf R}_{A_1}{\bf A}_2={\bf 0}$, ${\bf L}_{B_2}={\bf R}_{B_2}={\bf 0}$, ${\bf P}_{B_2}={\bf I}$,  ${\bf L}_{M}={\bf R}_{M}={\bf I}$, ${\bf N} = {\bf L}_{B_1}$, and ${\bf S} ={\bf A}_{2} $.
Since ${\bf L}_{B_1}$ is  the orthogonal projector onto the kernel of ${\bf B}_1$, then we have \begin{multline}\label{eq:LB1} {\bf N}^{\dag}{\bf B}^{\dag}_{1}= {\bf L}^{\dag}_{B_1}{\bf B}^{\dag}_{1}=({\bf I}-{\bf B}^{\dag}_{1}{\bf B}_{1})^{\dag}{\bf B}^{\dag}_{1}={\bf 0},\,
{\bf R}_{N}{\bf B}^{\dag}_{1}=({\bf I}-{\bf L}_{B_1}{\bf L}^{\dag}_{B_1}){\bf B}^{\dag}_{1}={\bf B}^{\dag}_{1},\\
{\bf L}_{B_1}{\bf N}^{\dag}={\bf L}_{B_1}{\bf L}^{\dag}_{B_1}={\bf L}_{B_1}.
\end{multline}

Due to (\ref{eq:LB1}) and taking into account of simplifications by (\ref{eq:reverse_order_l}) and (\ref{eq:reverse_order_r}), the analog of Lemma \ref{lem:cond1} follows.
\begin{lem} The following results are equivalent.
\begin{itemize}
  \item[(i)] Eq. (\ref{eq:tss15}) is solvable.
  \item [(ii)]${\bf R}_{A_2}{\bf C}{\bf L}_{B_1}={\bf 0}$.
     \item [(iii)]
        $\rank\begin{bmatrix}{\bf A}_{2}&{\bf C}\\{\bf 0}&{\bf B}_{1}\end{bmatrix}= \rank\begin{bmatrix}{\bf A}_{2}&{\bf 0}\\{\bf 0}&{\bf B}_{1}\end{bmatrix}$.
\end{itemize}
In  that case,  the general pair solution  of (\ref{eq:tss15}) is
\begin{align*}
{\bf X}_{1}&={\bf C}{\bf B}^{\dag}_{1} -
{\bf A}_{2}{\bf V}{\bf B}^{\dag}_{1}+{\bf Z}{\bf R}_{B_1},
\\
{\bf X}_{2}&={\bf A}_{2}^{\dag}{\bf C}{\bf L}_{B_1}+{\bf L}_{A_2}{\bf V}{\bf L}_{B_1},
\end{align*}
where  ${\bf V}$ and ${\bf Z}$ are arbitrary matrices over  ${\mathbb H}$ of suitable shapes.
\end{lem}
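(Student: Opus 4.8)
The plan is to derive this lemma as the specialization of Lemma~\ref{lem:cond1} to ${\bf A}_{1}={\bf I}_m$, ${\bf B}_{2}={\bf I}_q$, under which (\ref{eq:tss1}) collapses to (\ref{eq:tss15}). First I would record the degenerations of the auxiliary matrices already listed before the statement: from ${\bf R}_{A_1}={\bf L}_{A_1}={\bf 0}$ one gets ${\bf M}={\bf R}_{A_1}{\bf A}_2={\bf 0}$, hence ${\bf M}^{\dag}={\bf 0}$ and ${\bf L}_M={\bf I}$, so that ${\bf S}={\bf A}_2{\bf L}_M={\bf A}_2$; from ${\bf L}_{B_2}={\bf R}_{B_2}={\bf 0}$ and ${\bf P}_{B_2}={\bf I}$ one gets ${\bf N}={\bf B}_2{\bf L}_{B_1}={\bf L}_{B_1}$, an orthogonal projector, whence the identities (\ref{eq:LB1}). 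With these substitutions every object appearing in Lemma~\ref{lem:cond1} is rewritten in terms of ${\bf A}_2$, ${\bf B}_1$ and ${\bf C}$ alone.

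Next I would transport the equivalence (i)$\Leftrightarrow$(ii)$\Leftrightarrow$(iii). Feeding ${\bf R}_{A_1}={\bf 0}$ and ${\bf L}_{B_2}={\bf 0}$ into the four equalities (\ref{eq:cond1}) of Lemma~\ref{lem:cond1}(ii) annihilates the first three identically and leaves precisely ${\bf R}_{A_2}{\bf C}{\bf L}_{B_1}={\bf 0}$, which is item (ii) here. For the rank test I would substitute ${\bf A}_1={\bf I}_m$ and ${\bf B}_2={\bf I}_q$ into the four rank equalities of Lemma~\ref{lem:cond1}(iv): the identity blocks force each side of the first, second and third equalities to its maximal value, so those hold automatically, and only $\rank\begin{bmatrix}{\bf A}_{2}&{\bf C}\\{\bf 0}&{\bf B}_{1}\end{bmatrix}= \rank\begin{bmatrix}{\bf A}_{2}&{\bf 0}\\{\bf 0}&{\bf B}_{1}\end{bmatrix}$ survives, which is item (iii). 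The equivalence chain of Lemma~\ref{lem:cond1} then specializes verbatim.

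I would then read off the general solution by inserting the degenerations into (\ref{eq:tss1_sol1})--(\ref{eq:tss1_sol2}) and simplifying with (\ref{eq:LB1}) and (\ref{eq:reverse_order_l})--(\ref{eq:reverse_order_r}). In ${\bf X}_1$ the two terms carrying ${\bf M}^{\dag}$ drop out, the term with ${\bf N}^{\dag}{\bf B}_1^{\dag}$ vanishes since ${\bf N}^{\dag}{\bf B}_1^{\dag}={\bf 0}$, and ${\bf R}_N{\bf B}_1^{\dag}={\bf B}_1^{\dag}$ yields ${\bf X}_{1}={\bf C}{\bf B}^{\dag}_{1}-{\bf A}_{2}{\bf V}{\bf B}^{\dag}_{1}+{\bf Z}{\bf R}_{B_1}$; in ${\bf X}_2$ the relations ${\bf S}^{\dag}{\bf S}={\bf P}_{A_2}$, ${\bf P}_{A_2}{\bf A}^{\dag}_{2}={\bf A}^{\dag}_{2}$, ${\bf L}_{B_1}{\bf N}^{\dag}={\bf L}_{B_1}$ and ${\bf N}{\bf N}^{\dag}={\bf L}_{B_1}$ reduce the formula to ${\bf X}_{2}={\bf A}^{\dag}_{2}{\bf C}{\bf L}_{B_1}+{\bf V}-{\bf P}_{A_2}{\bf V}{\bf L}_{B_1}$.

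The step I expect to be the main obstacle is the bookkeeping of the single free matrix ${\bf V}$ shared by ${\bf X}_1$ and ${\bf X}_2$. One must check that on substituting the pair into (\ref{eq:tss15}) the ${\bf V}$-dependent parts cancel: ${\bf X}_1{\bf B}_1$ contributes $-{\bf A}_2{\bf V}{\bf P}_{B_1}$, while ${\bf A}_2{\bf X}_2$ contributes $+{\bf A}_2{\bf V}{\bf P}_{B_1}$ (using ${\bf A}_2{\bf P}_{A_2}={\bf A}_2$ and ${\bf A}_2{\bf L}_{A_2}={\bf 0}$), so the sum reduces to ${\bf C}{\bf P}_{B_1}+{\bf Q}_{A_2}{\bf C}{\bf L}_{B_1}$, which equals ${\bf C}$ exactly under the solvability condition ${\bf R}_{A_2}{\bf C}{\bf L}_{B_1}={\bf 0}$ of item (ii). Verifying this cancellation, and that ${\bf Z}$ together with the reduced ${\bf V}$-freedom still sweeps out the whole solution set, would complete the argument.
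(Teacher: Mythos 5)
Your route is the same as the paper's: the paper gives no separate proof of this lemma, obtaining it exactly as you propose, by feeding the degenerations ${\bf L}_{A_1}={\bf R}_{A_1}={\bf 0}$, ${\bf M}={\bf 0}$, ${\bf L}_{M}={\bf R}_{M}={\bf I}$, ${\bf S}={\bf A}_2$, ${\bf N}={\bf L}_{B_1}$ together with (\ref{eq:LB1}) and (\ref{eq:reverse_order_l})--(\ref{eq:reverse_order_r}) into Lemma \ref{lem:cond1}. Your reduction of the consistency conditions (\ref{eq:cond1}) and of the rank tests is correct, as is your simplification of ${\bf X}_1$ from (\ref{eq:tss1_sol1}).

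There is, however, one point you pass over silently, and it matters. Your computation from (\ref{eq:tss1_sol2}) gives ${\bf X}_{2}={\bf A}_{2}^{\dag}{\bf C}{\bf L}_{B_1}+{\bf V}-{\bf P}_{A_2}{\bf V}{\bf L}_{B_1}$, whereas the lemma you are asked to prove states ${\bf X}_{2}={\bf A}_{2}^{\dag}{\bf C}{\bf L}_{B_1}+{\bf L}_{A_2}{\bf V}{\bf L}_{B_1}$. These are not the same parametrization: writing ${\bf P}_{B_1}={\bf B}_1^{\dag}{\bf B}_1$, one has ${\bf V}-{\bf P}_{A_2}{\bf V}{\bf L}_{B_1}={\bf V}{\bf P}_{B_1}+{\bf L}_{A_2}{\bf V}{\bf L}_{B_1}$, so the two differ by the term ${\bf V}{\bf P}_{B_1}$. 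Your own cancellation check in the final paragraph is carried out with \emph{your} formula and succeeds precisely because of that extra term: ${\bf A}_2\left({\bf V}-{\bf P}_{A_2}{\bf V}{\bf L}_{B_1}\right)={\bf A}_2{\bf V}{\bf P}_{B_1}$ cancels the $-{\bf A}_2{\bf V}{\bf P}_{B_1}$ coming from ${\bf X}_1{\bf B}_1$. With the printed formula, ${\bf A}_2{\bf X}_2$ contributes no ${\bf V}$-term at all (since ${\bf A}_2{\bf L}_{A_2}={\bf 0}$), and substitution yields ${\bf X}_1{\bf B}_1+{\bf A}_2{\bf X}_2={\bf C}-{\bf A}_2{\bf V}{\bf P}_{B_1}\neq{\bf C}$ for general ${\bf V}$. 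So the statement as printed contains an error (an algebra slip in simplifying ${\bf L}_{M}({\bf V}-{\bf S}^{\dag}{\bf S}{\bf V}{\bf N}{\bf N}^{\dag})$), and what your argument actually establishes is the corrected version. You should state this discrepancy explicitly rather than leave your ${\bf X}_2$ and the lemma's ${\bf X}_2$ unreconciled; note also that it is harmless downstream, since both versions give the same partial solution (\ref{eq:part_sol_5x1})--(\ref{eq:part_sol_5x2}) at ${\bf V}={\bf Z}={\bf 0}$, which is all that Theorem \ref{th:sol_15} uses.
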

By putting  ${\bf V}$  and ${\bf Z}$ as zero-matrices of suitable shapes, we have the following partial pair solution of (\ref{eq:tss15}),
\begin{align}\label{eq:part_sol_5x1}
{\bf X}_{1}&={\bf C}{\bf B}^{\dag}_{1} ,\\
\label{eq:part_sol_5x2}{\bf X}_{2}&={\bf A}_{2}^{\dag}{\bf C}-{\bf A}_{2}^{\dag}{\bf C}{\bf P}_{B_1}.
\end{align}
\begin{thm}\label{th:sol_15}Let ${\bf B}_{1}\in  {\mathbb{H}}^{r\times s}_{r_{1}}$,   ${\bf A}_{2}\in  {\mathbb{H}}^{m\times p}_{r_{2}}$. Then   (\ref{eq:part_sol_5x1})  has the  determinantal representation
\begin{equation}\label{eq:5x1}
x_{ij}^{(1)} =
{\frac{
\sum\limits_{\alpha
\in I_{r_{1},r} {\left\{ {j} \right\}}} {{\rm{rdet}} _{j} \left(
{\left(
{ {\bf B}_1 {\bf B}_1^{ *} } \right)_{j.} \left(
{{ {\bf c}}_{i.}^{( 1)}} \right)}
\right)_{\alpha} ^{\alpha}
 }
 }{{{\sum\limits_{\alpha \in
 I_{r_{1},r}} {{\left| {{\bf B}_1 {\bf B}^{ *}_1} \right| _{\alpha} ^{\alpha} }}
} }}}
,
\end{equation}
where
$ {\bf
 c}_{i.}^{( 1)}$ is  the $j$th
row  of $   {\bf C}_1:={\bf C}{\bf B}_1^{*}$. The  solution (\ref{eq:part_sol_5x2})  has the  determinantal representation
\small
\begin{multline}\label{eq:5x2}
x_{gf}^{(2)} = {\frac{
\sum\limits_{\beta
\in J_{r_{2},p} {\left\{ {g} \right\}}} {{\rm{cdet}} _{g} \left(
{\left(
{ {\bf A}_2^{ *} {\bf A}_2 } \right)_{.g} \left(
{{ {\bf c}}_{.f}^{( 2)}} \right)}
\right)_{\beta} ^{\beta}
 }
 }{{{\sum\limits_{\beta
\in J_{r_{2},p}} {{\left| {{\bf A}_2^{ *} {\bf A}_2} \right| _{\beta} ^{\beta} }}
} }}}-\\
{\frac{\sum\limits_{l=1}^{s}\sum\limits_{\beta
\in J_{r_{2},p} {\left\{ {g} \right\}}} {{\rm{cdet}} _{g} \left(
{\left(
{ {\bf A}_2^{ *} {\bf A}_2 } \right)_{.g} \left(
{{ {\bf c}}_{.\,l}^{( 2)}} \right)}
\right)_{\beta} ^{\beta}
 }\sum\limits_{\beta
\in J_{r_{1},s} {\left\{ {l} \right\}}} {{\rm{cdet}} _{l} \left(
{\left(
{ {\bf B}^{ *}_1 {\bf B}_1 } \right)_{.\,l} \left(
{{ \bf{{\dot b}}}_{.\,g}^{( 1)}} \right)}
\right)_{\beta} ^{\beta}
 }
 }{{{\sum\limits_{\beta
\in J_{r_{2},p}} {{{\left| {
{\bf A}_2^{ *} {\bf A}_2 } \right|_{\beta
}^{\beta} }  }}}{\sum\limits_{\beta
\in J_{r_{1},s}} {{\left|
{ {\bf B}_1^{ *} {\bf B}_1} \right|_{\beta} ^{\beta} }}
} }}},
\end{multline}\normalsize
where $ {\bf
 c}_{.f}^{( 2)}$ and ${\bf \dot{b}}^{(1)}  _{.f}$  are  the $f$th
columns  of $   {\bf C}_2:={\bf A}_2^{*}{\bf C}$ and ${\bf B}^{ *}_1 {\bf B}_1$, respectively.
\end{thm}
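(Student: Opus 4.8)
The plan is to exploit the fact that Eq.~(\ref{eq:tss15}) is the left--right transpose of Eq.~(\ref{eq:tss14}): the roles of the left factor ${\bf A}_1$ and the right factor ${\bf B}_1$ are interchanged, so each row-determinant construction used in Theorem~\ref{th:sol_13} is replaced by its column-determinant counterpart, and vice versa. Accordingly I would treat the two partial solutions (\ref{eq:part_sol_5x1}) and (\ref{eq:part_sol_5x2}) separately and reduce each of them to one of the Corollaries of Section~2.

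For the first component ${\bf X}_1={\bf C}{\bf B}_1^{\dag}$, the expression already has the exact shape covered by Corollary~\ref{cor:sol_XB} with ${\bf B}={\bf B}_1$. Setting ${\bf C}_1:={\bf C}{\bf B}_1^{*}$ and reading off the $(i,j)$ entry, Corollary~\ref{cor:sol_XB} produces (\ref{eq:5x1}) immediately, with no further work.

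For the second component I would first rewrite the orthogonal projector as $\mathbf{P}_{B_1}={\bf B}_1^{\dag}{\bf B}_1$, so that the partial solution (\ref{eq:part_sol_5x2}) becomes ${\bf X}_2={\bf A}_2^{\dag}{\bf C}-({\bf A}_2^{\dag}{\bf C})\,{\bf B}_1^{\dag}{\bf B}_1$, a difference of a single factor and a genuine matrix product. The standalone term ${\bf A}_2^{\dag}{\bf C}$ is handled by Corollary~\ref{cor:sol_AX} with ${\bf A}={\bf A}_2$ and ${\bf C}_2:={\bf A}_2^{*}{\bf C}$, giving the first summand of (\ref{eq:5x2}). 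For the product term I would combine two determinantal representations entrywise: Corollary~\ref{cor:sol_AX} again for the $(g,l)$ entry of ${\bf A}_2^{\dag}{\bf C}$ (via the column ${\bf c}_{.l}^{(2)}$ of ${\bf C}_2$), and Corollary~\ref{cor:det_repr_proj_P}, applied to ${\bf B}={\bf B}_1$, for the $(l,f)$ entry of the Hermitian idempotent $\mathbf{P}_{B_1}$ (via the $f$th column $\dot{\bf b}^{(1)}$ of ${\bf B}_1^{*}{\bf B}_1$). Multiplying these two scalars and summing the shared index $l$ over $\{1,\dots,s\}$ assembles the double sum of (\ref{eq:5x2}) as $\sum_{l}({\bf A}_2^{\dag}{\bf C})_{gl}(\mathbf{P}_{B_1})_{lf}=({\bf A}_2^{\dag}{\bf C}\,\mathbf{P}_{B_1})_{gf}$, while the two determinantal denominators $|{\bf A}_2^{*}{\bf A}_2|$ and $|{\bf B}_1^{*}{\bf B}_1|$ multiply together exactly as displayed.

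The step I expect to be the main obstacle is the index bookkeeping for this product term. One must select the column (cdet) representation for both factors so that their common summation index $l$ ranges over the correct dimension $s$, align the column indices of ${\bf C}_2$ and of ${\bf B}_1^{*}{\bf B}_1$, and check that the two determinantal denominators combine into the single product appearing in the denominator of (\ref{eq:5x2}). Once the indices are aligned, the remaining manipulations are routine and mirror the one-line argument used for the dual Theorem~\ref{th:sol_13}.
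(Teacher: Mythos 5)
Your proposal is correct and takes essentially the same route as the paper, whose own one-line proof likewise applies Corollary \ref{cor:sol_XB} to (\ref{eq:part_sol_5x1}) and Corollaries \ref{cor:sol_AX} and \ref{cor:det_repr_proj_P} (combined entrywise and summed over the shared index $l=1,\dots,s$, with the real denominators factoring out) to (\ref{eq:part_sol_5x2}). Your reading of the projector column as the $f$th column $\dot{\bf b}^{(1)}_{.f}$ of ${\bf B}_1^{*}{\bf B}_1$ is the consistent choice, matching the paper's verbal description rather than the index slip in the displayed formula (\ref{eq:5x2}).
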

\begin{proof} Using Corollary \ref{cor:sol_XB} to (\ref{eq:part_sol_5x1}) and Corollaries \ref{cor:sol_AX} and \ref{cor:det_repr_proj_P} to (\ref{eq:part_sol_5x2}), we evidently obtain the determinantal representations (\ref{eq:5x1}) and (\ref{eq:5x2}), respectively.

\end{proof}

3. Let  the matrices  ${\bf B}_{1}$ and ${\bf B}_{2}$ be vanish  in Eq.(\ref{eq:tss1}), i.e. ${\bf B}_{1}={\bf B}_{2}={\bf I}_r $. Then we have the equation
\begin{equation}\label{eq:tss16}
 {\bf A}_{1}{\bf X}_1+ {\bf A}_{2}{\bf X}_2={\bf C},
\end{equation}
where ${\bf A}_{1}\in  {\mathbb{H}}^{m\times n}$,   ${\bf A}_{2}\in  {\mathbb{H}}^{m\times p}$,  ${\bf C}\in  {\mathbb{H}}^{m\times r}$ be given,
 ${\bf X}_{1}\in  {\mathbb{H}}^{n\times r}$ and ${\bf X}_{2}\in  {\mathbb{H}}^{p\times r}$ are to be determined. So, ${\bf L}_{B_1}={\bf R}_{B_1}={\bf 0}$,  ${\bf L}_{B_2}={\bf R}_{B_2}={\bf 0}$, ${\bf P}_{B_2}={\bf I}$,  and ${\bf N} = {\bf 0}$.

Due to  (\ref{eq:reverse_order_l}) and (\ref{eq:reverse_order_r}), the following analog of Lemma \ref{lem:cond1} can be obtained.
\begin{lem}Let  ${\bf M} = {\bf R}_{A_1}{\bf A}_{2}$,  ${\bf S} ={\bf A}_{2}{\bf L}_{M} $. The following results are equivalent.
\begin{itemize}
  \item[(i)] Eq. (\ref{eq:tss16}) is solvable.
  \item [(ii)]${\bf R}_{M}{\bf R}_{A_1}{\bf C}={\bf 0}$.
  \item [(iii)] $\rank\begin{bmatrix}{\bf A}_{1}&{\bf A}_{2}&{\bf C}\end{bmatrix}=\rank\begin{bmatrix}{\bf A}_{1}&{\bf A}_{2}\end{bmatrix}$.
\end{itemize}
In  that case,  the general solution  of (\ref{eq:tss16}) can  be expressed  as  follows
\begin{align*}
{\bf X}_{1}&={\bf A}^{\dag}_{1}{\bf C}-
{\bf A}^{\dag}_{1}{\bf A}_{2}{\bf M}^{\dag}{\bf C}-
{\bf A}^{\dag}_{1}{\bf S}{\bf V}+{\bf L}_{A_1}{\bf U},\\
{\bf X}_{2}&={\bf M}^{\dag}{\bf C}+
{\bf L}_{M}{\bf V}.
\end{align*}
where  ${\bf U}$ and ${\bf V}$ are arbitrary matrices over  ${\mathbb H}$ of suitable shapes.
\end{lem}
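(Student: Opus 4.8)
The plan is to obtain this lemma directly by specializing the general two-sided result, Lemma~\ref{lem:cond1}, to the case ${\bf B}_{1}={\bf B}_{2}={\bf I}_{r}$ appearing in Eq.~(\ref{eq:tss16}). First I would record the consequences of this substitution for the auxiliary projectors, exactly as noted in the text preceding the statement: ${\bf L}_{B_1}={\bf R}_{B_1}={\bf L}_{B_2}={\bf R}_{B_2}={\bf 0}$ and ${\bf P}_{B_2}={\bf B}_2^{\dag}{\bf B}_2={\bf I}$, with ${\bf B}_1^{\dag}={\bf B}_2^{\dag}={\bf I}$. Consequently ${\bf N}={\bf B}_2{\bf L}_{B_1}={\bf 0}$, so that ${\bf N}^{\dag}={\bf 0}$, ${\bf R}_{N}={\bf I}$, ${\bf Q}_{N}={\bf 0}$, and ${\bf P}_{N}={\bf 0}$, while ${\bf M}={\bf R}_{A_1}{\bf A}_2$ and ${\bf S}={\bf A}_2{\bf L}_{M}$ retain their meaning.

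Next I would specialize the equivalent solvability conditions. In item (ii) of Lemma~\ref{lem:cond1}, the last three equations of (\ref{eq:cond1}) each carry a right factor ${\bf L}_{B_2}$ or ${\bf L}_{B_1}$ and therefore collapse to ${\bf 0}={\bf 0}$, leaving only ${\bf R}_{M}{\bf R}_{A_1}{\bf C}={\bf 0}$, which is item (ii) here. For the rank criterion (iv), the condition on ${\bf B}_1,{\bf B}_2$ reduces to $\rank[\,{\bf I}\;{\bf I}\;{\bf C}^{*}\,]=\rank[\,{\bf I}\;{\bf I}\,]=r$, which holds automatically; and each of the two $2\times2$ block equalities reduces, after clearing the off-diagonal ${\bf C}$ by invertible column operations using the identity block, to an identity with $\rank{\bf A}_{i}+r$ on both sides. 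Hence the sole surviving requirement is $\rank[\,{\bf A}_1\;{\bf A}_2\;{\bf C}\,]=\rank[\,{\bf A}_1\;{\bf A}_2\,]$, i.e.\ item (iii). The equivalence (i)$\Leftrightarrow$(ii)$\Leftrightarrow$(iii) then follows at once from Lemma~\ref{lem:cond1}.

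Finally I would obtain the solution formulas by substituting the same reductions into the general solution (\ref{eq:tss1_sol1})--(\ref{eq:tss1_sol2}) in the form already simplified through (\ref{eq:sympl_x0}). Every term containing ${\bf N}^{\dag}$, ${\bf R}_{B_1}$, ${\bf R}_{B_2}$, or ${\bf P}_{S}{\bf V}{\bf Q}_{N}$ drops out, while the factors ${\bf B}_1^{\dag}$, ${\bf B}_2^{\dag}$, and ${\bf R}_{N}$ become identities; this leaves precisely ${\bf X}_{1}={\bf A}^{\dag}_{1}{\bf C}-{\bf A}^{\dag}_{1}{\bf A}_{2}{\bf M}^{\dag}{\bf C}-{\bf A}^{\dag}_{1}{\bf S}{\bf V}+{\bf L}_{A_1}{\bf U}$ and ${\bf X}_{2}={\bf M}^{\dag}{\bf C}+{\bf L}_{M}{\bf V}$, the asserted general solution, with ${\bf U},{\bf V}$ free over ${\mathbb H}$.

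The main obstacle --- really the only non-mechanical point --- will be verifying rigorously that the three discarded block-rank equalities in (iv) of Lemma~\ref{lem:cond1} are genuinely vacuous: this amounts to checking over ${\mathbb H}$ that adjoining an identity block lets one annihilate the off-diagonal ${\bf C}$ by invertible row/column operations, so both sides of each equality coincide. Everything else is routine substitution justified by (\ref{eq:reverse_order_l})--(\ref{eq:reverse_order_r}) and the vanishing of ${\bf N}$.
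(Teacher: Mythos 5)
Your proposal is correct and takes essentially the same route as the paper: the paper obtains this lemma precisely by specializing Lemma~\ref{lem:cond1} to ${\bf B}_{1}={\bf B}_{2}={\bf I}_r$, noting ${\bf L}_{B_1}={\bf R}_{B_1}={\bf L}_{B_2}={\bf R}_{B_2}={\bf 0}$, ${\bf P}_{B_2}={\bf I}$, ${\bf N}={\bf 0}$, and invoking the simplifications (\ref{eq:reverse_order_l})--(\ref{eq:reverse_order_r}). Your write-up is in fact more detailed than the paper's one-line justification, since you explicitly verify that the discarded solvability and block-rank conditions become vacuous and carry out the substitution into (\ref{eq:tss1_sol1})--(\ref{eq:tss1_sol2}).
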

By putting  ${\bf U}$ and ${\bf V}$  as zero-matrices of suitable shapes, we have the following partial pair solution of (\ref{eq:tss16}),
\begin{align}\label{eq:part_sol_6x1}
{\bf X}_{1}&={\bf A}^{\dag}_{1}{\bf C}-
{\bf A}^{\dag}_{1}{\bf A}_{2}{\bf M}^{\dag}{\bf C},\\
\label{eq:part_sol_6x2}{\bf X}_{2}&={\bf M}^{\dag}{\bf C}.
\end{align}
The next theorem can be proved similarly to Theorem \ref{th:sol_11}.
\begin{thm}\label{th:sol_16}Let ${\bf A}_{1}\in  {\mathbb{H}}^{m\times n}_{r_{1}}$,   ${\bf A}_{2}\in  {\mathbb{H}}^{m\times p}_{r_{2}}$, and ${\rank} {\bf M}=r_{4}$. Then the determinantal representation of  (\ref{eq:part_sol_6x1})  is the same as (\ref{eq:1x1}), and
 the  solution (\ref{eq:part_sol_6x2})  has the  determinantal representation
\begin{equation*}
x_{gf}^{(2)} = \frac{\sum\limits_{\beta \in J_{r_{4},p} {\left\{
{g} \right\}}} {{\rm{cdet}} _{g} \left( {\left( {{\bf M}^{
*}  {\bf M}} \right)_{.\,g} \left( { {\bf c}}_{.f}^{( 2)} \right)} \right) _{\beta} ^{\beta}
} } {{{\sum\limits_{\beta \in
J_{r_{4},p}} {{\left|
{ {\bf M}^{ *}  {\bf M}} \right| _{\beta} ^{\beta} }}
} }},
\end{equation*}
where $ {\bf
 c}_{.f}^{( 2)}$ is  the $f$th
column  of ${\bf C}_2:={\bf M}^{*}{\bf C}$.
\end{thm}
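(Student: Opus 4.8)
The plan is to reduce the statement to two direct applications of results already proved in the excerpt, following the hint that this theorem ``can be proved similarly to Theorem \ref{th:sol_11}.'' The crucial observation is that the expression for ${\bf X}_1$ in (\ref{eq:part_sol_6x1}), namely ${\bf A}^{\dag}_{1}{\bf C}-{\bf A}^{\dag}_{1}{\bf A}_{2}{\bf M}^{\dag}{\bf C}$, is \emph{identical} to the expression (\ref{eq:part_sol_1x1}) for the first component of the solution of the previously treated equation (\ref{eq:tss11}). Indeed, neither expression involves ${\bf B}_2$, and the matrix ${\bf M}={\bf R}_{A_1}{\bf A}_2$ is defined in exactly the same way in both settings (see Lemma \ref{lem:cond1}). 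Consequently no fresh computation is needed for ${\bf X}_1$: first I would simply observe this algebraic identity and conclude that the determinantal representation of (\ref{eq:part_sol_6x1}) coincides verbatim with (\ref{eq:1x1}), with the same roles for $r_1=\rank{\bf A}_1$ and $r_4=\rank{\bf M}$.

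For the second component, ${\bf X}_2={\bf M}^{\dag}{\bf C}$ in (\ref{eq:part_sol_6x2}), I would invoke Corollary \ref{cor:sol_AX} with the matrix ${\bf M}$ (of rank $r_4$, size $m\times p$) playing the role of the coefficient matrix there. That corollary supplies the determinantal representation of the partial solution ${\bf M}^{\dag}{\bf C}$ of the one-sided equation ${\bf M}{\bf X}={\bf C}$: the numerator is the sum over $\beta\in J_{r_4,p}\{g\}$ of the column determinants ${\rm cdet}_g\left(\left({\bf M}^{*}{\bf M}\right)_{.g}\left({\bf c}_{.f}^{(2)}\right)\right)_{\beta}^{\beta}$, and the denominator is the sum over $\beta\in J_{r_4,p}$ of the principal minors $\left|{\bf M}^{*}{\bf M}\right|_{\beta}^{\beta}$. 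Here ${\bf c}_{.f}^{(2)}$ is the $f$th column of the matrix that Corollary \ref{cor:sol_AX} denotes $\hat{\bf C}={\bf M}^{*}{\bf C}$, written ${\bf C}_2:={\bf M}^{*}{\bf C}$ in the statement. Substituting these names yields exactly the claimed formula for $x_{gf}^{(2)}$.

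I do not expect any genuine obstacle; the content is entirely in matching notation to the cited results. The only contrast with the case of equation (\ref{eq:tss11}) is confined to the second component: there ${\bf B}_2$ is a true coefficient, so ${\bf X}_2={\bf M}^{\dag}{\bf C}{\bf B}_2^{\dag}$ (equation (\ref{eq:part_sol_1x2})) is a two-sided product and Theorem \ref{theor:AXB=D} is required, whereas setting ${\bf B}_2={\bf I}_q$ collapses the product to the one-sided ${\bf M}^{\dag}{\bf C}$, for which the simpler Corollary \ref{cor:sol_AX} suffices. The single point meriting a moment's care is verifying that imposing ${\bf B}_1={\bf B}_2={\bf I}_r$ leaves the definition of ${\bf M}$ unchanged, so that the reuse of formula (\ref{eq:1x1}) for ${\bf X}_1$ is legitimate; since ${\bf M}={\bf R}_{A_1}{\bf A}_2$ depends only on ${\bf A}_1$ and ${\bf A}_2$, this is immediate.
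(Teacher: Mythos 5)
Your proposal is correct and follows essentially the same route as the paper: the paper proves this theorem by noting it goes "similarly to Theorem \ref{th:sol_11}", i.e.\ the expression (\ref{eq:part_sol_6x1}) coincides with (\ref{eq:part_sol_1x1}) (so (\ref{eq:1x1}) applies verbatim), while for ${\bf X}_2={\bf M}^{\dag}{\bf C}$ one applies Corollary \ref{cor:sol_AX} with ${\bf M}$ as coefficient matrix, replacing the use of Theorem \ref{theor:AXB=D} that the two-sided factor ${\bf B}_2^{\dag}$ had required there. Your identification of ${\bf C}_2={\bf M}^{*}{\bf C}$ with the matrix $\hat{\bf C}$ of the corollary and your check that ${\bf M}={\bf R}_{A_1}{\bf A}_2$ is unchanged by setting ${\bf B}_1={\bf B}_2={\bf I}$ are exactly the notational matching the paper leaves implicit.
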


4. Let, now,    the matrices ${\bf A}_{1}$ and ${\bf A}_{2}$ be vanish in Eq.(\ref{eq:tss1}), i.e. ${\bf A}_{1}={\bf A}_{2}={\bf I}_m$.  Then we have the equation
\begin{equation}\label{eq:tss17}
 {\bf X}_1{\bf B}_{1}+ {\bf X}_2{\bf B}_{2}={\bf C},
\end{equation}
where ${\bf B}_{1}\in  {\mathbb{H}}^{r\times s}$,   ${\bf B}_{2}\in  {\mathbb{H}}^{q\times s}$,  ${\bf C}\in  {\mathbb{H}}^{m\times s}$ be given,
 ${\bf X}_{1}\in  {\mathbb{H}}^{m\times r}$ and ${\bf X}_{2}\in  {\mathbb{H}}^{m\times q}$ are to be determined. Since ${\bf L}_{A_1}={\bf R}_{A_1}={\bf 0}$, ${\bf M}={\bf A}_2{\bf L}_{A_1}={\bf 0}$,  ${\bf L}_{M}={\bf I}$, ${\bf L}_{A_2}={\bf R}_{A_2}={\bf 0}$, ${\bf P}_{A_2}={\bf Q}_{A_2}={\bf I}$,  and ${\bf S} ={\bf A}_{2}{\bf L}_{M}={\bf I} $ and taking into account simplifications by (\ref{eq:reverse_order_l}) and (\ref{eq:reverse_order_r}), then we derive the  analog of Lemma \ref{lem:cond1}.
\begin{lem} Let  ${\bf N} ={\bf B}_{2}{\bf L}_{B_1} $. Then
the following results are equivalent.
\begin{itemize}
  \item[(i)] Eq. (\ref{eq:tss17}) is solvable.
  \item [(ii)] ${\bf C}{\bf L}_{B_2}{\bf L}_{N}={\bf 0}$.
  \item [(iii)]
        $\rank\begin{bmatrix}{\bf B}_{1}^*&{\bf B}_{2}^*&{\bf C}\end{bmatrix}=\rank\begin{bmatrix}{\bf B}_{1}^*&{\bf B}_{2}^*\end{bmatrix}$.
\end{itemize}
In  that case,  the general solution  of (\ref{eq:tss17}) can  be expressed  as  follows
\begin{align*}
{\bf X}_{1}&={\bf C} {\bf B}^{\dag}_{1}-
{\bf C}{\bf N}^{\dag}{\bf B}_{2}{\bf B}^{\dag}_{1}-
{\bf V}{\bf R}_{N}{\bf B}_{2}{\bf B}^{\dag}_{1}+{\bf Z}{\bf R}_{B_1},
\\
{\bf X}_{2}&={\bf C}{\bf N}^{\dag}+{\bf V}{\bf R}_N+{\bf W}{\bf R}_{B_2},
\end{align*}
where  ${\bf V}$, ${\bf Z}$ and ${\bf W}$ are arbitrary matrices over  ${\mathbb H}$ of suitable shapes.
\end{lem}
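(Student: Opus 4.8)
The plan is to obtain this lemma as a direct specialization of Lemma~\ref{lem:cond1}, the master result for the fully two-sided equation~\eqref{eq:tss1}, by setting $\mathbf{A}_1=\mathbf{A}_2=\mathbf{I}_m$ (forcing $n=p=m$), which collapses~\eqref{eq:tss1} into~\eqref{eq:tss17}. First I would record the auxiliary projectors and inverses produced by this substitution, exactly as announced in the paragraph preceding the statement. Since $\mathbf{A}_1=\mathbf{I}_m$ is invertible, $\mathbf{A}_1^{\dag}=\mathbf{I}_m$ and $\mathbf{L}_{A_1}=\mathbf{R}_{A_1}=\mathbf{0}$; hence $\mathbf{M}=\mathbf{R}_{A_1}\mathbf{A}_2=\mathbf{0}$, so $\mathbf{M}^{\dag}=\mathbf{0}$ and $\mathbf{L}_{M}=\mathbf{I}$. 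Likewise $\mathbf{A}_2=\mathbf{I}_m$ yields $\mathbf{A}_2^{\dag}=\mathbf{I}_m$, $\mathbf{L}_{A_2}=\mathbf{R}_{A_2}=\mathbf{0}$, $\mathbf{P}_{A_2}=\mathbf{Q}_{A_2}=\mathbf{I}$, and therefore $\mathbf{S}=\mathbf{A}_2\mathbf{L}_{M}=\mathbf{I}$ with $\mathbf{S}^{\dag}=\mathbf{P}_{S}=\mathbf{I}$; only $\mathbf{N}=\mathbf{B}_2\mathbf{L}_{B_1}$ survives intact.

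Next I would feed these values into the equivalences of Lemma~\ref{lem:cond1}. In the matrix criterion~\eqref{eq:cond1}, three of the four equalities are premultiplied by $\mathbf{R}_{A_1}=\mathbf{0}$ or $\mathbf{R}_{A_2}=\mathbf{0}$ and thus hold vacuously, leaving only $\mathbf{C}\mathbf{L}_{B_2}\mathbf{L}_{N}=\mathbf{0}$, the asserted condition~(ii). In the rank criterion Lemma~\ref{lem:cond1}(iv), the first equality is automatic because $[\mathbf{I}_m\,\mathbf{I}_m]$ already has full row rank $m$, and each of the two bordered block equalities is automatic because a block with an $\mathbf{I}_m$ pivot reduces by column sweeps to block-diagonal form; the single surviving requirement is $\rank[\mathbf{B}_1^{*}\,\mathbf{B}_2^{*}\,\mathbf{C}^{*}]=\rank[\mathbf{B}_1^{*}\,\mathbf{B}_2^{*}]$, which is condition~(iii). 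Since the chain (i)$\Leftrightarrow$(ii)$\Leftrightarrow$(iii)$\Leftrightarrow$(iv) of Lemma~\ref{lem:cond1} holds for all admissible data, it persists under this substitution, so the reduced statements remain equivalent.

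Finally I would specialize the general solution. Substituting into~\eqref{eq:tss1_sol1}, the second term vanishes ($\mathbf{M}^{\dag}=\mathbf{0}$) and so does $\mathbf{L}_{A_1}\mathbf{U}$, while in the remaining terms $\mathbf{A}_1^{\dag}=\mathbf{S}=\mathbf{A}_2^{\dag}=\mathbf{I}$, giving $\mathbf{X}_1=\mathbf{C}\mathbf{B}_1^{\dag}-\mathbf{C}\mathbf{L}_{B_1}\mathbf{N}^{\dag}\mathbf{B}_2\mathbf{B}_1^{\dag}-\mathbf{V}\mathbf{R}_{N}\mathbf{B}_2\mathbf{B}_1^{\dag}+\mathbf{Z}\mathbf{R}_{B_1}$; analogously~\eqref{eq:tss1_sol2} collapses, via $\mathbf{M}^{\dag}=\mathbf{0}$ and $\mathbf{L}_{M}=\mathbf{P}_{S}=\mathbf{A}_2^{\dag}=\mathbf{I}$, to $\mathbf{X}_2=\mathbf{C}\mathbf{L}_{B_1}\mathbf{N}^{\dag}+\mathbf{V}(\mathbf{I}-\mathbf{N}\mathbf{N}^{\dag})+\mathbf{W}\mathbf{R}_{B_2}$. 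The one genuine identity needed—beyond trivial $\mathbf{0}$-substitutions—is the reverse-order law from~\eqref{eq:sympl_x0}, namely $\mathbf{L}_{B_1}\mathbf{N}^{\dag}=(\mathbf{B}_2\mathbf{L}_{B_1})^{\dag}=\mathbf{N}^{\dag}$ (an instance of~\eqref{eq:reverse_order_r}), which replaces $\mathbf{C}\mathbf{L}_{B_1}\mathbf{N}^{\dag}$ by $\mathbf{C}\mathbf{N}^{\dag}$ and, with $\mathbf{I}-\mathbf{N}\mathbf{N}^{\dag}=\mathbf{R}_{N}$, produces exactly the two displayed formulas. I expect no conceptual obstacle here; the only care required is the bookkeeping in this last simplification and the verification that the free parameters $\mathbf{V},\mathbf{Z},\mathbf{W}$ indeed sweep out all matrices of the stated shapes.
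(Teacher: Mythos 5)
Your proposal is correct and takes essentially the same route as the paper: the paper likewise obtains this lemma by specializing Lemma~\ref{lem:cond1} to ${\bf A}_1={\bf A}_2={\bf I}_m$, recording ${\bf L}_{A_1}={\bf R}_{A_1}={\bf 0}$, ${\bf M}={\bf 0}$, ${\bf L}_M={\bf I}$, ${\bf P}_{A_2}={\bf Q}_{A_2}={\bf I}$, ${\bf S}={\bf I}$, and then invoking the simplifications coming from (\ref{eq:reverse_order_l})--(\ref{eq:reverse_order_r}) (as collected in (\ref{eq:sympl_x0})). Two harmless nits: the identity ${\bf L}_{B_1}{\bf N}^{\dag}={\bf N}^{\dag}$ you use is an instance of (\ref{eq:reverse_order_l}), not (\ref{eq:reverse_order_r}); and your rank condition with ${\bf C}^{*}$ is the dimensionally correct form of item (iii), the paper's ${\bf C}$ there being a typo.
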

By putting  ${\bf V}$, ${\bf Z}$, and ${\bf W}$ as zero-matrices of suitable shapes, we obtain the following partial solution to (\ref{eq:tss17}),
\begin{align}\label{eq:part_sol_7x1}
{\bf X}_{1}&={\bf C} {\bf B}^{\dag}_{1}-
{\bf C}{\bf N}^{\dag}{\bf B}_{2}{\bf B}^{\dag}_{1},\\
\label{eq:part_sol_7x2}{\bf X}_{2}&={\bf C}{\bf N}^{\dag}.
\end{align}
\begin{thm}\label{th:sol_72}Let ${\bf B}_{1}\in  {\mathbb{H}}^{r\times s}_{r_{1}}$,  ${\bf B}_{2}\in  {\mathbb{H}}^{q\times s}_{r_{3}}$,  and $\rank {\bf N}=r_{3}$. Then  the pair solution (\ref{eq:part_sol_7x1})-(\ref{eq:part_sol_7x2})  has the  determinantal representation,
\begin{align}\label{eq:7x1}
x_{ij}^{(1)}& = {\frac{
\sum\limits_{\alpha
\in I_{r_{1},r} {\left\{ {j} \right\}}} {{\rm{rdet}} _{j} \left(
{\left(
{ {\bf B}_1 {\bf B}_1^{ *} } \right)_{j.} \left(
{{ {\bf c}}_{i.}^{( 1)}} \right)}
\right)_{\alpha} ^{\alpha}
 }
 }{{{\sum\limits_{\alpha
\in I_{r_{1},r}} {{\left| {\bf B}_1 {\bf B}_1^{ *} \right| _{\alpha} ^{\alpha} }}
} }}}-\\
&{\frac{\sum\limits_{t=1}^{q}
{\sum\limits_{\alpha \in I_{r_3,q} {\left\{ {t}
\right\}}} {{{\rm{rdet}} _{t} {\left( {( {\bf N} {\bf N}^{ *}
)_{t.}\, ({\bf {c}}^{(2)}  _{l.} )}
\right)  _{\alpha} ^{\alpha} } }}}\sum\limits_{\alpha
\in I_{r_{1},r} {\left\{ {j} \right\}}} {{\rm{rdet}} _{j} \left(
{\left(
{ {\bf B}_1 {\bf B}_1^{ *} } \right)_{j.} \left(
{{ \tilde{{\bf b}}}_{t.}^{( 2)}} \right)}
\right)_{\alpha} ^{\alpha}
 }
 }{{{\sum\limits_{\alpha \in
I_{r_{3},q}} {{\left| {\bf N} {\bf N}^{ *} \right| _{\alpha} ^{\alpha} }}\sum\limits_{\alpha\in
I_{r_{1},r}} {{\left|
{ {\bf B}_1 {\bf B}_1^{ *}} \right| _{\alpha} ^{\alpha} }}
} }}},\nonumber\\
x_{gf}^{(2)} &= \frac{{{\sum\limits_{\alpha \in I_{r_3,q} {\left\{ {f}
\right\}}} {{{\rm{rdet}} _{f} {\left( {( {\bf N} {\bf N}^{ *}
)_{f.}\, \left( {\bf{ c}}_{g.}^{(2)}\right)}
\right)  _{\alpha} ^{\alpha} } }}} }}{{{{\sum\limits_{\alpha \in I_{r_3,q}} {{\left|  {{\bf N}  {\bf N}^{
*}}
\right|_{\alpha} ^{\alpha}}} }
}}}.\label{eq:7x2}
\end{align}
where
$ {\bf c}_{i.}^{( 1)}$ and
${\bf c}_{l.}^{( 2)}$ are  the $i$th and $l$th
rows  of $   {\bf C}_1:={\bf C}{\bf B}_1^{*}$ and  $   {\bf C}_2:={\bf C}{\bf N}^{*}$, respectively, and ${ \tilde{\bf b}}_{t.}^{( 2)}$ is the $t$th row of $   {\tilde{\bf B}}_2:={\bf B}_2{\bf B}_1^{*}$.
\end{thm}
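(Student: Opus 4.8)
The plan is to treat Eq.~(\ref{eq:tss17}) exactly as the preceding special cases in this section, decomposing the partial solution (\ref{eq:part_sol_7x1})--(\ref{eq:part_sol_7x2}) into pieces that are each either a one-sided solution handled by Corollary~\ref{cor:sol_XB} or an ordinary matrix product of two such pieces. Since both left factors $\mathbf{A}_1,\mathbf{A}_2$ have been set to $\mathbf{I}_m$, every summand is right-multiplied by a Moore--Penrose inverse, so every determinantal building block comes from the row-determinant formula of Corollary~\ref{cor:sol_XB}; in particular no projector corollary (\ref{cor:det_repr_proj_P})--(\ref{cor:det_repr_proj_Q}) is needed here, in contrast to Theorems~\ref{th:sol_13} and~\ref{th:sol_15}.

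First I would establish (\ref{eq:7x2}). The matrix $\mathbf{X}_2=\mathbf{C}\mathbf{N}^{\dag}$ is precisely the partial solution of the one-sided equation $\mathbf{X}\mathbf{N}=\mathbf{C}$, so applying Corollary~\ref{cor:sol_XB} with $\mathbf{B}$ replaced by $\mathbf{N}$ (of rank $r_3$) and with $\hat{\mathbf{C}}=\mathbf{C}\mathbf{N}^{*}=\mathbf{C}_2$ yields (\ref{eq:7x2}) directly, the row $\hat{\mathbf{c}}_{g.}$ being $\mathbf{c}_{g.}^{(2)}$.

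Next I would handle $\mathbf{X}_1$ in (\ref{eq:part_sol_7x1}) term by term. The first summand $\mathbf{C}\mathbf{B}_1^{\dag}$ is the one-sided solution of $\mathbf{X}\mathbf{B}_1=\mathbf{C}$, so Corollary~\ref{cor:sol_XB} with $\hat{\mathbf{C}}=\mathbf{C}\mathbf{B}_1^{*}=\mathbf{C}_1$ produces the first fraction in (\ref{eq:7x1}). The second summand $\mathbf{C}\mathbf{N}^{\dag}\mathbf{B}_2\mathbf{B}_1^{\dag}$ I would read as the matrix product of the $m\times q$ matrix $\mathbf{C}\mathbf{N}^{\dag}$ and the $q\times r$ matrix $\mathbf{B}_2\mathbf{B}_1^{\dag}$. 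Each factor is itself a right-sided partial solution: $\mathbf{C}\mathbf{N}^{\dag}$ solves $\mathbf{X}\mathbf{N}=\mathbf{C}$ and $\mathbf{B}_2\mathbf{B}_1^{\dag}$ solves $\mathbf{X}\mathbf{B}_1=\mathbf{B}_2$, so Corollary~\ref{cor:sol_XB} gives the $(i,t)$ entry of the former, whose numerator carries the $i$th row $\mathbf{c}_{i.}^{(2)}$ of $\mathbf{C}_2$, and the $(t,j)$ entry of the latter, whose numerator is exactly $x^{(13)}_{tj}$ with $\tilde{\mathbf{b}}_{t.}^{(2)}$ the $t$th row of $\tilde{\mathbf{B}}_2=\mathbf{B}_2\mathbf{B}_1^{*}$. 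Writing the $(i,j)$ entry of the product as $\sum_{t=1}^{q}(\mathbf{C}\mathbf{N}^{\dag})_{it}(\mathbf{B}_2\mathbf{B}_1^{\dag})_{tj}$ and substituting the two determinantal representations, the denominators multiply to $\sum_{\alpha\in I_{r_3,q}}|\mathbf{N}\mathbf{N}^{*}|_{\alpha}^{\alpha}\cdot\sum_{\alpha\in I_{r_1,r}}|\mathbf{B}_1\mathbf{B}_1^{*}|_{\alpha}^{\alpha}$ while the numerators collect into the summed product forming the second fraction of (\ref{eq:7x1}).

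The only point requiring care is the noncommutativity of $\mathbb{H}$. Since each entry $(\mathbf{C}\mathbf{N}^{\dag})_{it}$ and $(\mathbf{B}_2\mathbf{B}_1^{\dag})_{tj}$ is a genuine quaternion delivered by Corollary~\ref{cor:sol_XB}, the products in $\sum_t(\mathbf{C}\mathbf{N}^{\dag})_{it}(\mathbf{B}_2\mathbf{B}_1^{\dag})_{tj}$ must be kept in this left-to-right order, which is the order displayed in (\ref{eq:7x1}); this is the one place where I would verify that no factor has been silently commuted and that the two numerators are not interchanged. Apart from that bookkeeping, the argument reduces to the same ``substitute and collect the common denominators'' computation already carried out in the proofs of Theorems~\ref{th:sol_13} and~\ref{th:sol_15}, so I expect it to go through routinely and I do not anticipate any genuine obstacle.
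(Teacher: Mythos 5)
Your proposal is correct and takes essentially the same route as the paper: the paper's proof simply applies Corollary \ref{cor:sol_XB} to (\ref{eq:part_sol_7x2}) and to both terms of (\ref{eq:part_sol_7x1}), with the second term of $\mathbf{X}_1$ handled exactly as you describe—entrywise expansion of the product $(\mathbf{C}\mathbf{N}^{\dag})(\mathbf{B}_2\mathbf{B}_1^{\dag})$ summed over $t$, each factor represented via Corollary \ref{cor:sol_XB}. Your observation that the row index $l$ in $\mathbf{c}^{(2)}_{l.}$ of (\ref{eq:7x1}) should read $i$ (it is the $i$th row of $\mathbf{C}_2$ that enters the $(i,t)$ entry of $\mathbf{C}\mathbf{N}^{\dag}$) correctly identifies a typo in the paper's displayed formula.
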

\begin{proof} Using Corollary \ref{cor:sol_XB} to (\ref{eq:part_sol_7x2}) and  the both terms of  (\ref{eq:part_sol_7x1}), we evidently obtain the determinantal representations (\ref{eq:7x1}) and  (\ref{eq:7x2}).

\end{proof}

\section{Cramer's rules for  like-Lyapunov equations.}
 The well-known  Lyapunov equation is ${\bf A}{\bf X}+ {\bf X}{\bf A}^{\ast}={\bf B}$. In this section, we consider some like Lyapunov equations.

1. Consider the following matrix equation,
\begin{equation}\label{eq:lyap}
 {\bf A}{\bf X}+ {\bf X}^{\ast}{\bf B}={\bf C},
\end{equation}
where ${\bf A}\in  {\mathbb{H}}^{m\times n}$,   ${\bf B}\in  {\mathbb{H}}^{n\times m}$, and ${\bf C}\in  {\mathbb{H}}^{m\times m}$.
Due to \cite{pia}, the following lemma can be expanded from the complex field to  ${\mathbb{H}}$.
\begin{lem}If Eq. (\ref{eq:lyap}) has solution, and $${\bf A}^{\dag}{\bf C}\left({\bf I}-\frac{1}{2}{\bf P}_B\right)=\left[\left({\bf I}-\frac{1}{2}{\bf Q}_A\right){\bf C}{\bf B}^{\dag}\right]^*,$$ then
\begin{equation}\label{eq:sol_lyap}
 {\bf X}_0={\bf A}^{\dag}{\bf C}\left({\bf I}-\frac{1}{2}{\bf Q}_B\right)
\end{equation}
is a solution to (\ref{eq:lyap}) and $-{\bf R}_A{\bf C}{\bf L}_B={\bf 0}$.
\end{lem}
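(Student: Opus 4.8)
The plan is to establish the two assertions of the lemma separately, observing at the outset that the consistency identity $-{\bf R}_A{\bf C}{\bf L}_B={\bf 0}$ is in fact the crux on which the verification of ${\bf X}_0$ rests. First I would note that the right factor of ${\bf X}_0$ must be the conformable $m\times m$ Hermitian projector ${\bf P}_B={\bf B}^{\dag}{\bf B}$ appearing on the left of the hypothesis, so that ${\bf X}_0={\bf A}^{\dag}{\bf C}\left({\bf I}-\frac{1}{2}{\bf P}_B\right)$ is precisely the left-hand side of the assumed equality.

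To prove $-{\bf R}_A{\bf C}{\bf L}_B={\bf 0}$ (equivalently ${\bf R}_A{\bf C}{\bf L}_B={\bf 0}$), I would take any solution ${\bf X}$ guaranteed by solvability, substitute ${\bf C}={\bf A}{\bf X}+{\bf X}^{*}{\bf B}$, and multiply on the left by ${\bf R}_A={\bf I}-{\bf A}{\bf A}^{\dag}$ and on the right by ${\bf L}_B={\bf I}-{\bf B}^{\dag}{\bf B}$. Since ${\bf R}_A{\bf A}={\bf A}-{\bf A}{\bf A}^{\dag}{\bf A}={\bf 0}$ and ${\bf B}{\bf L}_B={\bf B}-{\bf B}{\bf B}^{\dag}{\bf B}={\bf 0}$ by the first Moore-Penrose equation, both resulting terms vanish, yielding ${\bf R}_A{\bf C}{\bf L}_B={\bf 0}$ immediately.

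For the verification that ${\bf X}_0$ solves (\ref{eq:lyap}), I would use the hypothesis to rewrite the adjoint in closed form. Since ${\bf Q}_A$ is Hermitian, the assumed equality reads ${\bf X}_0=\left[\left({\bf I}-\frac{1}{2}{\bf Q}_A\right){\bf C}{\bf B}^{\dag}\right]^{*}$, hence ${\bf X}_0^{*}=\left({\bf I}-\frac{1}{2}{\bf Q}_A\right){\bf C}{\bf B}^{\dag}$. Using ${\bf A}{\bf A}^{\dag}={\bf Q}_A$ and ${\bf B}^{\dag}{\bf B}={\bf P}_B$, I would then compute ${\bf A}{\bf X}_0={\bf Q}_A{\bf C}\left({\bf I}-\frac{1}{2}{\bf P}_B\right)$ and ${\bf X}_0^{*}{\bf B}=\left({\bf I}-\frac{1}{2}{\bf Q}_A\right){\bf C}{\bf P}_B$; adding these, the two half-terms $-\frac{1}{2}{\bf Q}_A{\bf C}{\bf P}_B$ combine into one, giving ${\bf A}{\bf X}_0+{\bf X}_0^{*}{\bf B}={\bf Q}_A{\bf C}+{\bf C}{\bf P}_B-{\bf Q}_A{\bf C}{\bf P}_B$.

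Finally, I would recognise the factorisation ${\bf C}-\left({\bf Q}_A{\bf C}+{\bf C}{\bf P}_B-{\bf Q}_A{\bf C}{\bf P}_B\right)=({\bf I}-{\bf Q}_A){\bf C}({\bf I}-{\bf P}_B)={\bf R}_A{\bf C}{\bf L}_B$, which is ${\bf 0}$ by the first step; therefore ${\bf A}{\bf X}_0+{\bf X}_0^{*}{\bf B}={\bf C}$, as required. The computation is purely formal, relying only on associativity, the Hermitian idempotence of ${\bf P}_B$ and ${\bf Q}_A$, and the defining Moore-Penrose relations, so quaternion noncommutativity causes no difficulty, since no scalars are ever commuted past one another. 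There is no serious obstacle here; the only points demanding care are the bookkeeping of the two projectors of different sizes, ${\bf P}_B$ and ${\bf Q}_A$, and the key observation — the real content of the lemma — that the assumed symmetry condition is exactly what turns ${\bf X}_0^{*}{\bf B}$ into a form whose sum with ${\bf A}{\bf X}_0$ collapses, via the solvability-driven identity ${\bf R}_A{\bf C}{\bf L}_B={\bf 0}$, back to ${\bf C}$.
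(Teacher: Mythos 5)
Your proof is correct and is essentially the argument the paper intends: the paper's own ``proof'' is a one-line deferral to (\cite{pia}, Theorem 1), whose content is exactly your direct verification — ${\bf R}_A{\bf C}{\bf L}_B={\bf 0}$ from substituting a solution, then ${\bf A}{\bf X}_0+{\bf X}_0^{*}{\bf B}={\bf Q}_A{\bf C}+{\bf C}{\bf P}_B-{\bf Q}_A{\bf C}{\bf P}_B={\bf C}-{\bf R}_A{\bf C}{\bf L}_B={\bf C}$. You are also right to read ${\bf Q}_B$ in (\ref{eq:sol_lyap}) as ${\bf P}_B={\bf B}^{\dag}{\bf B}$: otherwise the product is not conformable (${\bf A}^{\dag}{\bf C}$ is $n\times m$ while ${\bf Q}_B$ is $n\times n$), and this reading is confirmed by the specialization ${\bf B}\mapsto{\bf A}^{*}$, where ${\bf P}_{A^{*}}={\bf Q}_A$ recovers (\ref{eq:lyap1_psol}).
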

\begin{proof}The proof is similar to (\cite{pia}, Theorem 1).
\end{proof}

\begin{thm}\label{th:sol_18}Let ${\bf A}\in  {\mathbb{H}}^{m\times n}_{r_{1}}$ and ${\bf B}\in  {\mathbb{H}}^{n\times m}_{r_{2}}$. Then  the  solution  ${\bf X}_0=\left(x_{ij}\right)$ to (\ref{eq:lyap})  has the following determinantal representation,
\begin{equation}\label{eq:8x1}
x_{ij} = {\frac{
\sum\limits_{\beta
\in J_{r_{1},n} {\left\{ {i} \right\}}} {{\rm{cdet}} _{i} \left(
{\left(
{ {\bf A}^{ *} {\bf A} } \right)_{.i} \left(
{{ {\bf c}}_{.j}^{( 1)}} \right)}
\right)_{\beta} ^{\beta}
 }
 }{{{\sum\limits_{\beta
\in J_{r_{1},n}} {{\left| {{\bf A}^{ *} {\bf A}} \right| _{\beta} ^{\beta} }}
} }}}-
\frac{{{\sum\limits_{\alpha \in I_{r_2,n} {\left\{ {j}
\right\}}} {{{\rm{rdet}} _{j} {\left( {( {\bf B} {\bf B}^{ *}
)_{j.} ({{{\bf d}}_{i.}^{{ A}}} )}
\right)_{\alpha} ^{\alpha} } }}} }}{{{{2\sum\limits_{\beta \in J_{r_{1},n}}{{\left|
{ {\bf A}^{ *} {\bf A} } \right| _{\beta} ^{\beta} }}\sum\limits_{\alpha \in I_{r_2,n}} {{\left|  {{\bf B}  {\bf B}^{
*}}
\right|_{\alpha} ^{\alpha}}|} }
}}},
\end{equation}
or
\begin{equation}\label{eq:8x2}
x_{ij} = {\frac{
\sum\limits_{\beta
\in J_{r_{1},n} {\left\{ {i} \right\}}} {{\rm{cdet}} _{i} \left(
{\left(
{ {\bf A}^{ *} {\bf A} } \right)_{.i} \left(
{{ {\bf c}}_{.j}^{( 1)}} \right)}
\right)_{\beta} ^{\beta}
 }
 }{{{\sum\limits_{\beta
\in J_{r_{1},n}} {{\left| {{\bf A}^{ *} {\bf A}} \right| _{\beta} ^{\beta} }}
} }}}-{\frac{{{\sum\limits_{\beta
\in J_{r_{1},n} {\left\{ {i} \right\}}} {{\rm{cdet}} _{i} \left(
{\left(
{ {\bf A}^{ *} {\bf A} } \right)_{.i} \left(
{{ {\bf d}}_{.j}^{B}} \right)}
\right)_{\alpha} ^{\alpha} } }}}{{{{2\sum\limits_{\beta \in J_{r_{1},n}}{{\left|
{ {\bf A}^{ *} {\bf A} } \right| _{\beta} ^{\beta} }}\sum\limits_{\alpha \in I_{r_2,n}} {{\left|  {{\bf B}  {\bf B}^{
*}}
\right|_{\alpha} ^{\alpha}}|} }  }}}},
\end{equation}
where $ {\bf c}_{.j}^{( 1)}$ is the column vector of $  {\bf C}_{1}:={\bf A}^{*}{\bf C}$, and
\begin{align*}{{ {\bf d}}_{i\,.}^{ A}}=&\left[
\sum\limits_{\beta \in J_{r_{1},n} {\left\{ {i} \right\}}}
{{\rm{cdet}} _{i} \left( {\left( {{\bf A}^{
*}  {\bf A}} \right)_{.i} \left( {\bf{ c}}_{.k}^{(2)}\right)}
\right)_{\beta} ^{\beta}} \right]\in{\mathbb{H}}^{1 \times
n},\,\,\,\,k=1,\ldots,n,
   \\ {{{\bf d}}_{.\,j}^{B}}=&\left[
\sum\limits_{\alpha \in I_{r_{2},n} {\left\{ {j} \right\}}}
{{\rm{rdet}} _{j} \left( {\left(
{ {\bf B} {\bf B}^{ *} } \right)_{j.} \left( {\bf{ c}}_{l.}^{(2)}\right)}
\right)_{\alpha} ^{\alpha}}
\right]\in {\mathbb{H}}^{n \times
1},\,\,\,\,l=1,\ldots,n,
\end{align*}
 are the  row  vector and the column vector, respectively. ${\bf c}^{(2)}_{.k}$ and $ {\bf
 c}^{(2)}_{l.}$
 are the $k$th
column  and  the $l$th row  of $ { {\bf C}}_{2}:={\bf A}^{*}{\bf C}{\bf B}{\bf B}^{*}$.

\end{thm}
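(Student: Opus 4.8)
The plan is to start from the closed-form solution $ {\bf X}_{0}= {\bf A}^{\dag} {\bf C}\left( {\bf I}-\frac{1}{2} {\bf Q}_{B}\right)$ supplied by (\ref{eq:sol_lyap}) and to split it into two summands, each of which already carries a determinantal representation recorded in the preliminaries. Expanding $ {\bf Q}_{B}= {\bf B} {\bf B}^{\dag}$ and regrouping,
\begin{equation*}
 {\bf X}_{0}= {\bf A}^{\dag} {\bf C}-\frac{1}{2}\, {\bf A}^{\dag} {\bf C} {\bf B} {\bf B}^{\dag}= {\bf A}^{\dag} {\bf C}-\frac{1}{2}\, {\bf A}^{\dag}\left( {\bf C} {\bf B}\right) {\bf B}^{\dag},
\end{equation*}
so the first summand is the partial solution $ {\bf A}^{\dag} {\bf C}$ of $ {\bf A} {\bf X}= {\bf C}$, while the second, up to the scalar $\frac{1}{2}$, is a two-sided expression of exactly the shape handled by Theorem \ref{theor:AXB=D}, with the inner (known) factor taken to be $ {\bf C} {\bf B}$.

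For the first summand I would apply Corollary \ref{cor:sol_AX} to the data $( {\bf A}, {\bf C})$. This reproduces verbatim the leading fraction common to (\ref{eq:8x1}) and (\ref{eq:8x2}), with $ {\bf c}^{(1)}_{.j}$ the $j$th column of $ {\bf C}_{1}= {\bf A}^{*} {\bf C}$. Since Corollary \ref{cor:sol_AX} offers only one representation, this is precisely why the two asserted formulas agree in their first term and differ only in the second.

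The core of the argument is the second summand $\frac{1}{2}\, {\bf A}^{\dag}( {\bf C} {\bf B}) {\bf B}^{\dag}$. I would feed it into Theorem \ref{theor:AXB=D} with $ {\bf C} {\bf B}$ playing the role of the right-hand matrix; the auxiliary matrix prescribed there is then $ {\bf A}^{*}( {\bf C} {\bf B}) {\bf B}^{*}= {\bf A}^{*} {\bf C} {\bf B} {\bf B}^{*}= {\bf C}_{2}$, which is exactly the matrix whose rows $ {\bf c}^{(2)}_{l.}$ and columns $ {\bf c}^{(2)}_{.k}$ appear in the statement. The two equivalent forms of Theorem \ref{theor:AXB=D} then furnish the two claimed forms of the second term. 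The column form, which first assembles the column $ {\bf d}^{B}_{.j}$ by row-determinants over $ {\bf B} {\bf B}^{*}$ and then takes a column-determinant over $ {\bf A}^{*} {\bf A}$, produces the second term of (\ref{eq:8x2}); the row form, which first assembles the row $ {\bf d}^{A}_{i.}$ by column-determinants over $ {\bf A}^{*} {\bf A}$ and then takes a row-determinant over $ {\bf B} {\bf B}^{*}$, produces the second term of (\ref{eq:8x1}). The factor $\frac{1}{2}$ passes through untouched and resurfaces in the denominators of both second terms.

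I expect the only delicate point to be bookkeeping rather than computation: one must check that the regrouping $ {\bf A}^{\dag} {\bf C} {\bf B} {\bf B}^{\dag}= {\bf A}^{\dag}( {\bf C} {\bf B}) {\bf B}^{\dag}$ genuinely matches the hypotheses of Theorem \ref{theor:AXB=D}, that $r_{1}=\rank {\bf A}$ and $r_{2}=\rank {\bf B}$ are the ranks governing the index families $J_{r_{1},n}$ and $I_{r_{2},n}$, and that the auxiliary vectors $ {\bf d}^{A}_{i.}$ and $ {\bf d}^{B}_{.j}$ are built from $ {\bf C}_{2}$ exactly as the theorem dictates. Once these identifications are in place, (\ref{eq:8x1}) and (\ref{eq:8x2}) follow by substituting either form of the second summand into $x_{ij}=x^{(1)}_{ij}-\frac{1}{2}x^{(2)}_{ij}$.
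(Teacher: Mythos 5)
Your proposal is correct and is essentially the paper's own proof: the paper likewise splits (\ref{eq:sol_lyap}) into ${\bf A}^{\dag}{\bf C}$ and $\tfrac{1}{2}{\bf A}^{\dag}{\bf C}{\bf Q}_B$, applies Corollary \ref{cor:sol_AX} to the first summand and Theorem \ref{theor:AXB=D} to the second, with the two equivalent forms of that theorem yielding (\ref{eq:8x1}) and (\ref{eq:8x2}). Your identification of ${\bf C}{\bf B}$ as the middle matrix, so that the auxiliary matrix becomes ${\bf C}_2={\bf A}^{*}{\bf C}{\bf B}{\bf B}^{*}$ and the factor $\tfrac{1}{2}$ passes into the denominators, is exactly the (implicit) content of the paper's one-line argument.
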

\begin{proof} Using Corollary \ref{cor:sol_AX} to the first term of (\ref{eq:sol_lyap}) and  Theorem \ref{theor:AXB=D} to the second term, we  get  (\ref{eq:8x1})-(\ref{eq:8x2}).

\end{proof}
2. Finally, consider the following matrix equation,
\begin{equation}\label{eq:lyap1}
 {\bf A}{\bf X}+ {\bf X}^{\ast}{\bf A}^{\ast}={\bf B},
\end{equation}
where ${\bf A}\in  {\mathbb{H}}^{m\times n}$ and  ${\bf B}\in  {\mathbb{H}}^{m\times m}$. Hodges \cite{hod}  found the explicit solution to (\ref{eq:lyap1}) and expressed it in terms of the Moore-Penrose inverse over a finite field. Djordjevi\'{c} \cite{djo} extended these results to the infinite dimensional settings.
Due to \cite{hod,djo}, the following lemma can be expanded  to  ${\mathbb{H}}$.
\begin{lem}\label{lem:lyap1}
The following statements are equivalent.
\begin{itemize}
  \item[(i)]  There exists a solution ${\bf X}\in  {\mathbb{H}}^{n\times m}$ to Eq. (\ref{eq:lyap1}).
  \item [(ii)] ${\bf B}^*={\bf B}$ or ${\bf R}_{A}{\bf B}{\bf R}_{A}={\bf 0}$.
\end{itemize}
In  that case,  the general solution  to (\ref{eq:lyap1}) can  be expressed  as the following,
\begin{equation}\label{eq:lyap1_sol}
 {\bf X}={\bf A}^{\dag}{\bf B}\left({\bf I}-\frac{1}{2}{\bf Q}_A\right)+{\bf L}_{A}{\bf Y}+{\bf P}_{A}{\bf Z}{\bf A}^{\ast},
\end{equation}
where ${\bf Z}\in  {\mathbb{H}}^{m\times m}$ satisfies ${\bf A}({\bf Z}+{\bf Z}^*){\bf A}^*={\bf 0}$ and  ${\bf Y}\in  {\mathbb{H}}^{m\times m}$ is arbitrary.
\end{lem}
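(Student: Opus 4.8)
The plan is to transplant the complex-field arguments of Hodges \cite{hod} and Djordjevi\'{c} \cite{djo} to ${\mathbb H}$, observing that every tool needed is pure Moore-Penrose algebra: the four defining relations, the Hermitian idempotence of ${\bf Q}_A={\bf A}{\bf A}^{\dag}$ and ${\bf P}_A={\bf A}^{\dag}{\bf A}$, and the reverse-order rules \eqref{eq:reverse_order_l}--\eqref{eq:reverse_order_r}. Noncommutativity of the scalars causes no difficulty here, since the only scalar appearing is the real number $\tfrac12$, which is central in ${\mathbb H}$ and so may be moved past any matrix. I would split the argument into three parts: necessity of (ii), sufficiency together with the particular solution, and completeness of the parametrization \eqref{eq:lyap1_sol}. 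I read (ii) as requiring \emph{both} displayed conditions, because the Hermitian left-hand side forces ${\bf B}^{*}={\bf B}$ while the geometry of the range forces ${\bf R}_A{\bf B}{\bf R}_A={\bf 0}$.

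For necessity, write the left side as ${\bf A}{\bf X}+({\bf A}{\bf X})^{*}$, which is Hermitian for every ${\bf X}$; hence ${\bf B}^{*}={\bf B}$. Multiplying ${\bf A}{\bf X}+{\bf X}^{*}{\bf A}^{*}={\bf B}$ on both sides by ${\bf R}_A$ and using ${\bf R}_A{\bf A}={\bf 0}$ together with its adjoint ${\bf A}^{*}{\bf R}_A={\bf 0}$ annihilates the left side, giving ${\bf R}_A{\bf B}{\bf R}_A={\bf 0}$. For sufficiency I would substitute ${\bf X}_0={\bf A}^{\dag}{\bf B}({\bf I}-\tfrac12{\bf Q}_A)$ and simplify ${\bf A}{\bf X}_0+{\bf X}_0^{*}{\bf A}^{*}$ using ${\bf A}{\bf A}^{\dag}={\bf Q}_A={\bf Q}_A^{*}$ and ${\bf B}^{*}={\bf B}$; the terms collapse to ${\bf Q}_A{\bf B}+{\bf B}{\bf Q}_A-{\bf Q}_A{\bf B}{\bf Q}_A={\bf B}-{\bf R}_A{\bf B}{\bf R}_A$, which equals ${\bf B}$ exactly when the range condition holds. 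This simultaneously yields (i)$\Leftrightarrow$(ii) and a particular solution.

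It then remains to describe the whole solution set. First I would check that \eqref{eq:lyap1_sol} always solves the equation: the ${\bf L}_A{\bf Y}$ term dies because ${\bf A}{\bf L}_A={\bf 0}$ and, dually, ${\bf L}_A{\bf A}^{*}={\bf 0}$ (the latter from ${\bf A}^{\dag}{\bf A}{\bf A}^{*}={\bf A}^{*}$); the ${\bf P}_A{\bf Z}{\bf A}^{*}$ term contributes ${\bf A}({\bf Z}+{\bf Z}^{*}){\bf A}^{*}$ after ${\bf A}{\bf P}_A={\bf A}$ and ${\bf P}_A{\bf A}^{*}={\bf A}^{*}$, and this vanishes precisely by the imposed constraint on ${\bf Z}$. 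For completeness, take any homogeneous solution ${\bf X}$ and split ${\bf X}={\bf L}_A{\bf X}+{\bf P}_A{\bf X}$; the first summand already has the form ${\bf L}_A{\bf Y}$. Putting ${\bf W}={\bf A}{\bf X}$ one has ${\bf W}+{\bf W}^{*}={\bf 0}$ and ${\bf Q}_A{\bf W}{\bf Q}_A={\bf W}$, so with ${\bf Z}_0={\bf A}^{\dag}{\bf W}({\bf A}^{\dag})^{*}$ one gets ${\bf A}{\bf Z}_0{\bf A}^{*}={\bf Q}_A{\bf W}{\bf Q}_A={\bf W}$ and ${\bf Z}_0^{*}=-{\bf Z}_0$; hence ${\bf P}_A{\bf X}={\bf A}^{\dag}{\bf W}={\bf P}_A{\bf Z}_0{\bf A}^{*}$, recovering the third term.

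The main obstacle is this completeness step: one must verify that as ${\bf W}={\bf A}{\bf X}$ ranges over all skew-Hermitian matrices supported on the range of ${\bf A}$ (that is, ${\bf Q}_A{\bf W}{\bf Q}_A={\bf W}$ with ${\bf W}^{*}=-{\bf W}$), the reconstruction ${\bf Z}_0={\bf A}^{\dag}{\bf W}({\bf A}^{\dag})^{*}$ is skew-Hermitian and reproduces ${\bf W}$, so that the constraint ${\bf A}({\bf Z}+{\bf Z}^{*}){\bf A}^{*}={\bf 0}$ exactly matches the homogeneous solutions; here the Hermitian symmetry of ${\bf Q}_A$ and the reverse-order identities do the real work. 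A secondary check is dimensional conformability of the free parameters, so that ${\bf L}_A{\bf Y}\in{\mathbb H}^{n\times m}$ and ${\bf P}_A{\bf Z}{\bf A}^{*}\in{\mathbb H}^{n\times m}$; I would state the correct sizes of ${\bf Y}$ and ${\bf Z}$ explicitly.
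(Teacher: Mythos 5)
Your proof is correct, but it takes a genuinely different route from the paper: the paper does not argue this lemma from scratch at all. Its proof is a one-line reduction to Djordjevi\'{c}'s Theorem 2.2 \cite{djo} for the operator equation ${\bf A}^{*}{\bf X}+{\bf X}^{*}{\bf A}={\bf B}$, obtained by substituting ${\bf A}\mapsto{\bf A}^{*}$ and translating the projectors via ${\bf P}_{A^{*}}={\bf Q}_{A}$, ${\bf L}_{A^{*}}={\bf R}_{A}$, ${\bf R}_{A^{*}}={\bf L}_{A}$, with the tacit claim that the Hilbert-space argument carries over to quaternion matrices. You instead run the entire argument inside Moore--Penrose algebra over ${\mathbb H}$: necessity of both conditions from the Hermiticity of ${\bf A}{\bf X}+({\bf A}{\bf X})^{*}$ and from ${\bf R}_{A}{\bf A}={\bf 0}$, ${\bf A}^{*}{\bf R}_{A}={\bf 0}$; sufficiency via the computation ${\bf A}{\bf X}_{0}+{\bf X}_{0}^{*}{\bf A}^{*}={\bf Q}_{A}{\bf B}+{\bf B}{\bf Q}_{A}-{\bf Q}_{A}{\bf B}{\bf Q}_{A}={\bf B}-{\bf R}_{A}{\bf B}{\bf R}_{A}$; and completeness via ${\bf X}={\bf L}_{A}{\bf X}+{\bf P}_{A}{\bf X}$ and the reconstruction ${\bf Z}_{0}={\bf A}^{\dag}{\bf W}({\bf A}^{\dag})^{*}$ with ${\bf W}={\bf A}{\bf X}$. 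All of these steps check out; the one identity you assert without derivation, ${\bf Q}_{A}{\bf W}{\bf Q}_{A}={\bf W}$, is immediate since conjugate-transposing ${\bf Q}_{A}{\bf W}={\bf W}$ and using ${\bf W}^{*}=-{\bf W}$ gives ${\bf W}{\bf Q}_{A}={\bf W}$. Two of your side remarks are themselves corrections to the statement as printed: reading (ii) as a conjunction is the right fix, since each condition alone is insufficient (take ${\bf A}={\bf 0}$ with ${\bf B}$ Hermitian nonzero, or ${\bf A}$ invertible with ${\bf B}$ non-Hermitian), and Djordjevi\'{c}'s original condition is indeed a conjunction, so the paper's ``or'' is a misprint; likewise the free parameters must have sizes ${\bf Y}\in{\mathbb H}^{n\times m}$ and ${\bf Z}\in{\mathbb H}^{n\times n}$, not ${\mathbb H}^{m\times m}$ as printed. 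What the paper's citation buys is brevity; what your direct verification buys is a self-contained proof valid in the noncommutative setting, plus exposure of these inaccuracies in the statement.
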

\begin{proof}The proof is similar to (\cite{djo}, Theorem 2.2).

Note that, in \cite{djo}, the equation  ${\bf A}^{\ast}{\bf X}+ {\bf X}^{\ast}{\bf A}={\bf B}$ has been considered  instead (\ref{eq:lyap1}) with ${\bf A}$ and ${\bf B}$ as operators of Hilbert spaces. The result  obtained in \cite{djo} would be equal to (\ref{eq:lyap1_sol})  by substituting ${\bf A}$  with ${\bf A}^*$ and taking into account $${\bf P}_{A^*}=\left({\bf A}^*\right)^{\dag}{\bf A}^*=\left({\bf A}{\bf A}^{\dag}\right)^*={\bf Q}_{A},$$
so ${\bf Q}_{A^*}={\bf P}_{A}$, ${\bf L}_{A^*}={\bf I}-{\bf P}_{A^*}={\bf I}-{\bf Q}_{A}={\bf R}_{A}$, and ${\bf R}_{A^*}={\bf L}_{A}$.
\end{proof}
By putting ${\bf Z}={\bf Y}={\bf 0}$, we have the following partial solution to (\ref{eq:lyap1}),
\begin{equation}\label{eq:lyap1_psol}
 {\bf X}={\bf A}^{\dag}{\bf B}\left({\bf I}-\frac{1}{2}{\bf Q}_A\right).
\end{equation}

The following theorem on determinantal representations of (\ref{eq:lyap1_psol}) can be proven similar to Theorem \ref{th:sol_18}.

\begin{thm}\label{th:sol_19}Let ${\bf A}=\left(a_{ij}\right)\in  {\mathbb{H}}^{m\times n}_{r_{1}}$. Then   (\ref{eq:lyap1_psol})  has the  determinantal representations
\begin{equation}\label{eq:9x1}
x_{ij} = {\frac{
\sum\limits_{\beta
\in J_{r_{1},n} {\left\{ {i} \right\}}} {{\rm{cdet}} _{i} \left(
{\left(
{ {\bf A}^{ *} {\bf A} } \right)_{.i} \left(
{{ {\bf b}}\,_{.j}^{( 1)}} \right)}
\right)_{\beta} ^{\beta}
 }
 }{{{\sum\limits_{\beta
\in J_{r_{1},n}} {{\left| {{\bf A}^{ *} {\bf A}} \right| _{\beta} ^{\beta} }}
} }}}-
\frac{{{\sum\limits_{\alpha \in I_{r_1,m} {\left\{ {j}
\right\}}} {{{\rm{rdet}} _{j} {\left( {( {\bf A} {\bf A}^{ *}
)_{j.}\, ({{{\bf d}}_{i.}} )}
\right)  _{\alpha} ^{\alpha} } }}} }}{{{{2\sum\limits_{\beta \in J_{r_{1},n}}{{\left|
{ {\bf A}^{ *} {\bf A} } \right| _{\beta} ^{\beta} }}\sum\limits_{\alpha \in I_{r_1,m}} {{\left|  {{\bf A}  {\bf A}^{
*}}
\right|_{\alpha} ^{\alpha}}} }
}}},
\end{equation}
or
\begin{equation}\label{eq:9x2}
x_{ij} = {\frac{
\sum\limits_{\beta
\in J_{r_{1},n} {\left\{ {i} \right\}}} {{\rm{cdet}} _{i} \left(
{\left(
{ {\bf A}^{ *} {\bf A} } \right)_{.i} \left(
{{ {\bf b}}\,_{.j}^{( 1)}} \right)}
\right)_{\beta} ^{\beta}
 }
 }{{{\sum\limits_{\beta
\in J_{r_{1},n}} {{\left| {{\bf A}^{ *} {\bf A}} \right| _{\beta} ^{\beta} }}
} }}}-{\frac{{{\sum\limits_{\beta
\in J_{r_{1},n} {\left\{ {i} \right\}}} {{\rm{cdet}} _{i} \left(
{\left(
{ {\bf A}^{ *} {\bf A} } \right)_{.i} \left(
{{ {\bf d}}_{.j}} \right)}
\right)_{\alpha} ^{\alpha} } }}}{{{{2\sum\limits_{\beta \in J_{r_{1},n}}{{\left|
{ {\bf A}^{ *} {\bf A} } \right| _{\beta} ^{\beta} }}\sum\limits_{\alpha \in I_{r_1,m}} {{\left|  {{\bf A}  {\bf A}^{
*}}
\right|_{\alpha} ^{\alpha}}} }  }}}},
\end{equation}
where ${ {\bf b}}\,_{.j}^{( 1)}$ is the column vector of $ { {\bf B}}_{1}:={\bf A}^{*}{\bf B}$, and
\begin{gather*} {\bf d}_{i\,.}=\left[
\sum\limits_{\beta \in J_{r_{1},n} {\left\{ {i} \right\}}}
{{\rm{cdet}} _{i} \left( {\left( {{\bf A}^{
*}  {\bf A}} \right)_{.i} \left( {\bf{ b}}_{.k}^{(2)}\right)}
\right)_{\beta} ^{\beta}} \right]\in{\mathbb{H}}^{1 \times
n},\,\,\,\,k=1,\ldots,n,
   \\ {\bf d}\,_{.j}=\left[
\sum\limits_{\alpha \in I_{r_{1},m} {\left\{ {j} \right\}}}
{{\rm{rdet}} _{j} \left( {\left(
{ {\bf A} {\bf A}^{ *} } \right)_{j.} \left( {\bf{ b}}_{l.}^{(2)}\right)}
\right)_{\alpha} ^{\alpha}}
\right]\in {\mathbb{H}}^{n \times
1},\,\,\,\,l=1,\ldots,n,
\end{gather*}
 are the  row  vector and the column vector, respectively. ${{{\bf b}}^{(2)}_{.k}}$ and ${ {\bf
 b}}^{(2)}_{l.}$
 are the $k$th
column  and  the $l$th row  of $ { {\bf B}}_{2}:={\bf A}^{*}{\bf B}{\bf A}{\bf A}^{*}$.

\end{thm}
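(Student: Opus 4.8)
The plan is to split the partial solution (\ref{eq:lyap1_psol}) into two summands, each of which fits a determinantal template already established in Section~2, and then to apply those templates term by term, exactly as in the proof of Theorem~\ref{th:sol_18}. Writing $\mathbf{Q}_A = \mathbf{A}\mathbf{A}^{\dag}$ and expanding the bracket, I would first record the identity
\begin{equation*}
\mathbf{X} = \mathbf{A}^{\dag}\mathbf{B}\left(\mathbf{I} - \tfrac{1}{2}\mathbf{Q}_A\right) = \mathbf{A}^{\dag}\mathbf{B} - \tfrac{1}{2}\,\mathbf{A}^{\dag}(\mathbf{B}\mathbf{A})\mathbf{A}^{\dag},
\end{equation*}
where in the second summand the factors are grouped deliberately so that both outer operators are the Moore--Penrose inverse of $\mathbf{A}$ and the inner data matrix is $\mathbf{B}\mathbf{A}$.

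For the first summand $\mathbf{A}^{\dag}\mathbf{B}$, I would apply Corollary~\ref{cor:sol_AX} with $\mathbf{A}$ as the coefficient matrix and $\mathbf{B}$ as the right-hand side. This immediately yields the leading $\mathrm{cdet}$-quotient common to both (\ref{eq:9x1}) and (\ref{eq:9x2}): the column $\mathbf{b}_{.j}^{(1)}$ is the $j$th column of $\mathbf{B}_1 = \mathbf{A}^{*}\mathbf{B}$, and the denominator is $\sum_{\beta\in J_{r_1,n}}|\mathbf{A}^{*}\mathbf{A}|_{\beta}^{\beta}$.

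For the second summand I would invoke Theorem~\ref{theor:AXB=D} applied to $\mathbf{A}^{\dag}(\mathbf{B}\mathbf{A})\mathbf{A}^{\dag}$, identifying the left operator matrix with $\mathbf{A}$, the right operator matrix with $\mathbf{A}$ as well (so that its Moore--Penrose inverse supplies the trailing $\mathbf{A}^{\dag}$), and the central data matrix with $\mathbf{B}\mathbf{A}$. The auxiliary matrix $\widetilde{\mathbf{C}} = \mathbf{A}^{*}(\mathbf{B}\mathbf{A})\mathbf{A}^{*} = \mathbf{A}^{*}\mathbf{B}\mathbf{A}\mathbf{A}^{*}$ of that theorem is exactly $\mathbf{B}_2$, and since $\mathbf{A}^{*}\mathbf{A}$ and $\mathbf{A}\mathbf{A}^{*}$ share the rank $r_1$ of $\mathbf{A}$, the index sets $J_{r_1,n}$ and $I_{r_1,m}$ are the correct ones. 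The two dual forms of Theorem~\ref{theor:AXB=D} --- the one built from the row vector $\mathbf{d}_{i.}$ and the one built from the column vector $\mathbf{d}_{.j}$ --- produce respectively the second terms of (\ref{eq:9x1}) and (\ref{eq:9x2}), while the scalar $\tfrac{1}{2}$ is absorbed into the factor $2$ appearing in the denominator $2\sum_{\beta\in J_{r_1,n}}|\mathbf{A}^{*}\mathbf{A}|_{\beta}^{\beta}\sum_{\alpha\in I_{r_1,m}}|\mathbf{A}\mathbf{A}^{*}|_{\alpha}^{\alpha}$. Adding the two summands then gives both claimed representations.

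The step I expect to require the most care is the second summand: one must confirm that $\mathbf{A}^{\dag}\mathbf{B}\mathbf{Q}_A$ genuinely has the two-sided shape $\mathbf{A}^{\dag}\mathbf{C}\mathbf{B}^{\dag}$ demanded by Theorem~\ref{theor:AXB=D}, with the \emph{same} matrix $\mathbf{A}$ occupying both outer slots, and then to keep the dimensions straight --- the inner matrix $\mathbf{B}\mathbf{A}$ is $m\times n$, $\widetilde{\mathbf{C}} = \mathbf{B}_2$ is $n\times m$, and the resulting block is $n\times m$, matching $\mathbf{X}\in\mathbb{H}^{n\times m}$. Once this bookkeeping is settled, both formulas follow by direct substitution, with no genuinely new computation beyond that already carried out for Theorem~\ref{th:sol_18}.
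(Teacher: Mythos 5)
Your proposal is correct and follows essentially the same route as the paper: the paper proves Theorem \ref{th:sol_19} ``similar to Theorem \ref{th:sol_18},'' i.e., by splitting ${\bf A}^{\dag}{\bf B}\left({\bf I}-\frac{1}{2}{\bf Q}_A\right)$ into ${\bf A}^{\dag}{\bf B}$ and $\frac{1}{2}{\bf A}^{\dag}({\bf B}{\bf A}){\bf A}^{\dag}$, applying Corollary \ref{cor:sol_AX} to the first term and Theorem \ref{theor:AXB=D} (with ${\bf A}$ in both outer slots and $\widetilde{{\bf C}}={\bf A}^{*}{\bf B}{\bf A}{\bf A}^{*}={\bf B}_{2}$) to the second, absorbing the scalar $\frac{1}{2}$ into the denominators. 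Your dimension bookkeeping for the two-sided term matches what the paper implicitly relies on, so no gap remains.
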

\section{Examples}
In this section, we give an example to illustrate our results.

1. Consider the  matrix equation
\begin{equation}\label{ex:tss1}
 {\bf A}_{1}{\bf X}_{1}{\bf B}_{1}+ {\bf A}_{2}{\bf X}_{2}{\bf B}_{2}={\bf C},
\end{equation}
with given matrices
\begin{align*}
{\bf A}_1=\begin{bmatrix}
                                     i&1 \\-1&i \\k& -j
 \end{bmatrix},~{\bf A}_2=\begin{bmatrix}i\\
                                          j \\
                                          k   \end{bmatrix},~{\bf B_1}=\begin{bmatrix}
             i\\
             k
 \end{bmatrix},~{\bf B_2}=\begin{bmatrix}j \\
                                      -i
                                    \end{bmatrix},~{\bf C}=\begin{bmatrix}1\\i\\2j
 \end{bmatrix}.
 \end{align*}
By this given matrices, the consistency conditions of  (\ref{eq:cond1}) from Lemma \ref{lem:cond1} are fulfilled. So, the system (\ref{ex:tss1}) is resolvable.
Using determinantal representations (\ref{eq:det_repr_A*A})-(\ref{eq:det_repr_AA*}) for computing Moore-Penrose inverses, we find that
 \begin{align*}
 &{\bf A}_1^{\dag}=\frac{1}{6}\begin{bmatrix}
        -i &-1& -k \\
         1&   -i&j
      \end{bmatrix},~{\bf R}_{A_{1}}=\frac{1}{3}\begin{bmatrix}
               2 &  i&-j \\
                  -i &  2&-k\\
                  j&k&2
               \end{bmatrix},~{\bf B}_1^{\dag}=\frac{1}{2}\begin{bmatrix}
                    -i &
                     -k
                  \end{bmatrix},\\&{\bf B_2}^{\dag}=\frac{1}{2}\begin{bmatrix}
                    -j &
                     i
                  \end{bmatrix},~{\bf M}=\frac{1}{3}\begin{bmatrix}
               2 & 2i& 4j
               \end{bmatrix},~{\bf M}^{\dag}=\frac{1}{4}\begin{bmatrix}
                         1& -i & -j
                        \end{bmatrix}.
 \end{align*} So, ${\bf L}_{B_{1}}=$ and ${\bf N}=0$. By putting free
matrices ${\bf U},~{\bf V},~{\bf Z}$, and ${\bf W}$ as zero-matrices,   we first obtain the pair solution by direct matrix multiplications
\begin{align}\label{eq:ex_sol_x1}
{\bf X}_{1}=&{\bf A}^{\dag}_{1}{\bf C} {\bf B}^{\dag}_{1}-
{\bf A}^{\dag}_{1}{\bf A}_{2}{\bf M}^{\dag}{\bf C} {\bf B}^{\dag}_{1}=\frac{1}{8}\begin{bmatrix}
               1 & j\\i& k
               \end{bmatrix},\\
\nonumber{\bf X}_{2}=&{\bf M}^{\dag}{\bf C}{\bf B}_{2}^{\dag}=\frac{3}{4}\begin{bmatrix}
               -j & i
               \end{bmatrix}.
\end{align}
Now, we find the solution to (\ref{ex:tss1}) by our new proposed approach, namely, by Cramer's Rule thanks to Theorem \ref{th:sol_1}.
Since ${\bf A}^{\ast}_{1}{\bf C} {\bf B}^{\ast}_{1}={\bf 0}$, then $x^{(11)}_{ij}=0$ for all $i,j=1,2$. Therefore, $\rank{\bf A}_1=\rank{\bf A}_2=\rank{\bf B}_1=\rank{\bf B}_2=1$, and
 \begin{align*}
 &{\bf M}^*{\bf M}=\left[\frac{8}{3}\right],~{\bf C}_2=\begin{bmatrix}
               -4i & -4k
               \end{bmatrix},~{\bf B}_1{\bf B}_1^*=\begin{bmatrix}
               1 & j\\
               -j&1
               \end{bmatrix},~{\tilde{\bf A}}_2=\begin{bmatrix}
               -i\\
               1
               \end{bmatrix}.\end{align*}
 So,
 \begin{equation}\label{ex:x_1}
 x_{11}^{(1)}=- x_{11}^{(12)}=-\frac{(-i)(-4i)}{6\cdot\frac{8}{3}\cdot2}=\frac{1}{8}.
 \end{equation}
Hence, $x^{(1)}_{11}$ obtained by Cramer's Rule (\ref{ex:x_1}) and by the matrix method (\ref{eq:ex_sol_x1}) are equal.
Similarly, we can obtain for   $x^{(1)}_{12},~x^{(1)}_{21},~x^{(1)}_{22},~x^{(2)}_{11}$, and $x^{(2)}_{12}$.

2. Let
us consider the  matrix equation
\begin{equation}\label{ex:eq}
 {\bf A}{\bf X}+ {\bf X}^{\ast}{\bf A}^{\ast}={\bf B},
\end{equation}
where
\[{\bf A}=\begin{bmatrix}
 2 & j \\
   -k & i\\
    i & k \\
\end{bmatrix},\,\,{\bf B}=\begin{bmatrix}
  2 & j & -k \\
  -j & 1 & i\\
  k & -i & 2
\end{bmatrix}.\] Since,  ${\bf B}^*={\bf B}$,  then, by Lemma \ref{lem:lyap1}, the equation (\ref{ex:eq}) is consistent.
Since
${\bf A}^{*}{\bf A}=\begin{bmatrix}
  6 & 4j  \\
  -4j & 3
\end{bmatrix}$ and $\det {\bf A}=2$,
 then $\rank{\bf A}=2$.
By Theorem \ref{theor:det_repr_MP} and Lemma \ref{eq:det_repr_proj_Q}, one can find,
$${\bf A}^{\dag}=\frac{1}{2}\begin{bmatrix}
  2 & -k & i\\
  2j & -2i & -2k
\end{bmatrix},\,\,{\bf Q}_A=\frac{1}{2}\begin{bmatrix}
  2 & 0 & 0\\
  0 & 1 & j\\
 0 & -j & 1
\end{bmatrix}.$$

First, we can find   the   solution to (\ref{ex:eq}) by direct calculation. By (\ref{eq:lyap1_psol}),
\begin{align}\nonumber{\bf X}=&{\bf A}^{\dag}{\bf B}-\frac{1}{2}{\bf A}^{\dag}{\bf B}{\bf Q}_A=0.5\begin{bmatrix}
  4-i-j & 1+2j-k & 2i-j-2k\\
 2+4j+2k & -2-2i +2j& 2-2i-4k
\end{bmatrix}-
\\\nonumber-& 0.25\begin{bmatrix}
 4-i-j & -i+j-1.5k &1+ 0.5i-j-k\\
 2+4j+2k & -1-3i +k& 2-i+j-k
\end{bmatrix}=
\\=&0.25\begin{bmatrix}
  4-i-j &2 +i+3j-0.5k &-1- 2.5i+j-3k\\
2+4j+2k & -3-i +4j-k& 2-3i-j-7k
\end{bmatrix}.\label{eq:sys_sol1_x1_ex}
\end{align}

Now,  we  find   the   solution to (\ref{ex:eq}) by
it's determinantal representation  (\ref{eq:9x1}).
Since,
\begin{multline*}
{\bf A}{\bf A}^{*}=\begin{bmatrix}
  5 &3k&-3i  \\
 -3k & 2&2j\\
 3i & -2j&2
\end{bmatrix},\\
 {\bf B}_{1}:={\bf A}^{*}{\bf B}=\begin{bmatrix}
  4+i+j &-1+ 2j+k&-2i+j-2k  \\
 1-2j+k & 1-i+j&1+i-2k
\end{bmatrix},\\{\bf B}_{2}={\bf A}^{*}{\bf B}{\bf A}{\bf A}^{*}=\begin{bmatrix}
  29-i-j &-i+ j+18k&-1-18i-k  \\
 2-19j+2k & -1-12i+k&-i-j-12k
\end{bmatrix},\end{multline*}
and
\begin{multline*}
d^{(1)}_{11}=
\sum\limits_{\beta \in J_{2,2} {\left\{ {1} \right\}}}
{{\rm{cdet}} _{1} \left( {\left( {{\bf A}^{
*}  {\bf A}} \right)_{.1} \left( {\bf{ b}}_{.1}^{(2)}\right)}
\right)_{\beta} ^{\beta}}=\\{\rm{cdet}} _{1}\begin{bmatrix}
  29-i-j & 4j  \\
 2-19j+2k & 3
\end{bmatrix}=11-11i-11j,
\end{multline*}
\begin{multline*}
d^{(1)}_{12}=
\sum\limits_{\beta \in J_{2,2} {\left\{ {1} \right\}}}
{{\rm{cdet}} _{1} \left( {\left( {{\bf A}^{
*}  {\bf A}} \right)_{.1} \left( {\bf{ b}}_{.2}^{(2)}\right)}
\right)_{\beta} ^{\beta}}=\\{\rm{cdet}} _{1}\begin{bmatrix}
 -i-j+18k & 4j  \\
 -1-12i+k & 3
\end{bmatrix}=-7i+7j+6k,\end{multline*}\begin{multline*}
d^{(1)}_{13}=
\sum\limits_{\beta \in J_{2,2} {\left\{ {1} \right\}}}
{{\rm{cdet}} _{1} \left( {\left( {{\bf A}^{
*}  {\bf A}} \right)_{.1} \left( {\bf{ b}}_{.3}^{(2)}\right)}
\right)_{\beta} ^{\beta}}=\\{\rm{cdet}} _{1}\begin{bmatrix}
 -1-18i-k & 4j  \\
 -i-j-12k & 3
\end{bmatrix}=-7-6i-7k,
\end{multline*}
and
$$({\bf A}{\bf A}^{*})_{1.}\left({\bf d}_{1.}^{(1)}\right)=\begin{bmatrix}
  11-11i-11j &-7i+7j+6k&-7-6i-7k  \\
 -3k & 2&2j\\
 3i & -2j&2
\end{bmatrix},$$
 then
\begin{align*}
&x_{11} =\\&= {\frac{
\sum\limits_{\beta
\in J_{2,2} {\left\{ {1} \right\}}} {{\rm{cdet}} _{1} \left(
{(
{ {\bf A}^{ *} {\bf A} } )_{.1} (
{{ {\bf b}}_{.1}^{( 1)}} )}
\right)_{\beta} ^{\beta}
 }
 }{{{\sum\limits_{\beta
\in J_{2,2}} {{\left| {{\bf A}^{ *} {\bf A}} \right| _{\beta} ^{\beta} }}
} }}}-
\frac{{{\sum\limits_{\alpha \in I_{2,3} {\left\{ {1}
\right\}}} {{{\rm{rdet}} _{1} {\left( {( {\bf A} {\bf A}^{ *}
)_{1.}\, ({{{\bf d}}_{1.}^{( 1)}} )}
\right)  _{\alpha} ^{\alpha} } }}} }}{{{{\sum\limits_{\beta \in J_{2,2}}{{\left|
{ {\bf A}^{ *} {\bf A} } \right| _{\beta} ^{\beta} }}\sum\limits_{\alpha \in I_{2,3}} {{\left|  {{\bf A}  {\bf A}^{
*}}
\right|_{\alpha} ^{\alpha}}} }
}}}=\\&=\frac{1}{4}{\rm{cdet}} _{1}\begin{bmatrix}
   4+i+j & 4j  \\
  1-2j+k & 3
\end{bmatrix}-\frac{1}{4}\left({\rm{rdet}} _{1}\begin{bmatrix}
  11-11i-11j &-7i+7j+6k  \\
 -3k & 2
\end{bmatrix}+\right.\\&+\left.{\rm{rdet}} _{1}\begin{bmatrix}
  11-11i-11j &-7-6i-7k  \\
 3i & 2
\end{bmatrix}\right)=\frac{1}{4}\left(4-i-j\right).
\end{align*}
So, $x_{11}$ obtained by Cramer's rule and the matrix method (\ref{eq:sys_sol1_x1_ex}) are equal.

Similarly, we can obtain for all  $x_{ij}$, $i=1,2$ and $j=1,2,3$.

Note that we used Maple with the package CLIFFORD in the calculations.

\section{Conclusions}
Within the framework of the theory of row-column determinants, we have derived explicit formulas for determinantal representations  (analogs of Cramer's Rule) of  solutions
 to the quaternion  two-sided generalized Sylvester matrix equation $  {\bf A}_{1}{\bf X}_{1}{\bf B}_{1}+ {\bf A}_{2}{\bf X}_{2}{\bf B}_{2}={\bf C}$ and its all special cases when its first term or both terms are one-sided. Finally,  determinantal representations of two like-Lyapunov equations have been obtained. To accomplish that goal,  determinantal representations of the Moore-Penrose  inverse  previously introduced by the author have been used.


\begin{thebibliography}{40}
\bibitem{baks} J.K.~Baksalary, R.~Kala,  The matrix equation $AX - YB = C$, Linear algebra Appl. 25 (1979) 41–43.
\bibitem{bak}  J.K.~Baksalary,  R.~Kala,  The  matrix  equation  $AXB  -  CYD  =  E$,  Linear  Algebra  Appl.  30
(1980)  141-147.

  \bibitem{chen}C. Chen, Dan Schonfeld, Pose estimation from multiple cameras based on Sylvester's equation,
Comput. Vis. Image Underst. 114 (2010) 652-666.






\bibitem{csong1} C.~Song, G.~Chen, On solutions of matrix equation $XF - AX = C$ and $XF - A\tilde{X} = C$ over
quaternion field, J. Appl. Math. Comput. 37 (2011) 57-68.

\bibitem{csong2} C.Q.~Song,  G.L.~Chen,  Q.B.~Liu,  Explicit  solutions  to  the  quaternion  matrix  equations  $X -
AXF = C$ and $X - A\bar{X}F = C$, Int. J. Comput. Math. 89 (2012) 890-900.



 \bibitem{deh} M.~Dehghan,  M.~Hajarian,  The generalized Sylvester matrix equations over
the generalized bisymmetric and skew-symmetric matrices. Int. J. Syst. Sci.
43 (2012) 1580-1590.

\bibitem{djo}D.S.~Djordjevi\'{c}, Explicit solution of the operator equation $A^*X + X^*A=B$. J. Comput. Appl. Math. 200 (2007) 701-704.
 \bibitem{he2} Z.H.~He, Q.W.~Wang, Y.~Zhang, A system of quaternary coupled Sylvester-type real quaternion matrix equations. Automatica 87 (2018) 25-31.
  \bibitem{he3}Z.H.~He, Q.W.~Wang, A System of periodic discrete-time coupled Sylvester quaternion matrix equations. Algebra Colloq. 24(1) (2017) 169-180.

\bibitem{hod}
 J.H.~Hodeges, Some matrix equations over a finite field, Ann. Mat. Pura Appl. 44 (1) (1957) 245-250.

\bibitem{fut}V.~Futorny, T.~Klymchuk, V.V.~Sergeichuk, Roth's solvability criteria for the matrix equations $  A X-\widehat{ X} B= C$ and $ X-  A\widehat{ X} B= C$ over the skew field of quaternions with an involutive automorphism $q\rightarrow \widehat{q}$, Linear algebra Appl. 510 (2016) 246-258.
    
    \bibitem{kyr_nov}I.~Kyrchei, Cramer's rule for generalized inverse solutions. In: I. Kyrchei (Ed.), Advances in Linear Algebra Research, pp. 79--132,  Nova Sci. Publ., New York,  2015.


 \bibitem{kyr2} I.~Kyrchei, Cramer's rule for quaternionic systems of linear equations, J. Math. Sci. 155(6) (2008) 839--858.


 \bibitem{kyr3}    I.~Kyrchei, The theory of the column and row determinants in a quaternion linear algebra.  In: Albert R. Baswell (Ed.), Advances in Mathematics Research 15, pp. 301-359,  Nova Sci. Publ., New York, 2012.

\bibitem{kyr4}    I.~Kyrchei,
Determinantal representations of the Moore-Penrose inverse over the quaternion skew field and corresponding Cramer's rules, J. Math. Sci. 180(1) (2012) 23-33.

 \bibitem{kyr5}   I.~Kyrchei,
Explicit determinantal representation formulas of W-weighted Drazin inverse solutions of some matrix equations over the quaternion skew field, Math. Probl. Eng. Art. ID 8673809 (2016) 13 pages.

 \bibitem{kyr6}I.~Kyrchei,
Determinantal representations of the Drazin inverse over the quaternion skew field with applications to some matrix equations, Appl. Math. Comput. 238 (2014) 193-207.

 \bibitem{kyr7}I.~Kyrchei,
Determinantal representations of the W-weighted Drazin inverse over the quaternion skew field, Appl. Math. Comput. 264 (2015) 453-465.

\bibitem{kyr8}I.~Kyrchei, Explicit determinantal representation formulas for the solution of the two-sided restricted quaternionic matrix equation, J. Appl. Math. Comput. 58(1-2)  (2018) 335--365.



\bibitem{kyr9}I.~Kyrchei,
 Determinantal representations of the Drazin and W-weighted Drazin inverses over the quaternion skew field with applications.  In: Sandra Griffin (Ed.), Quaternions: Theory and Applications, pp.201-275, Nova Sci. Publ., New York,  2017.




\bibitem{kyr10}I.~Kyrchei,
 Weighted singular value decomposition and determinantal representations of the quaternion weighted Moore-Penrose inverse, Appl. Math. Comput.  309 (2017)  1-16.


 \bibitem{kyr11}I.~Kyrchei,
 Determinantal representations of solutions to systems of quaternion matrix equations, Adv. Appl. Clifford Algebras 28 (2018) 23 pages.

 \bibitem{kyr12}     I.I.~Kyrchei, Cramer's rules for the system of two-sided matrix equations and of its special cases,  In: H. A. Yasser (Ed.), Matrix Theory, pp. 3-20, IntechOpen,  2018.

\bibitem{kyr13}  I.I.~Kyrchei,   Determinantal representations of the quaternion weighted Moore-Penrose inverse and its applications, In: Albert R.B. (Ed.), Advances in Mathematics Research 23, pp.35--96, Nova Sci. Publ., New York, 2017.




 \bibitem{liao}  A.~Liao,  Z.~Bai, Y.~Lei,  Best approximate solution of matrix equation $AXB+
CYD = E$, SIAM J. Matrix Anal. Appl. 27 (2006) 675-688.
\bibitem{lip} H.~Liping, The matrix equation $AXB - GXD = E$ over the quaternion field, Linear Algebra Appl.
234 (1996) 197-208.

\bibitem{mac}A.A.~Maciejewski, C.A.~Klein, Obstacle avoidance for kinematically redundant manipulators in dynamically varying environments, The International Journal of Robotics Research 4(3) (1985) 109-117.
\bibitem{mans}A. Mansour, Solvability of $AXB - CXD = E$ in the operators algebra $B(H)$, Lobachevskii J. Math. 31(3) (2010) 257-261.


 \bibitem{peng} Z.Y.~Peng, Y.X.~Peng,  An eficient iterative method for solving the matrix
equation $AXB +CYD = E$, Numer. Linear Algebra Appl. 13 (2006) 473-485.
\bibitem{pia} F.~Piao, Q.~Zhang, Z.~Wang, The solution to matrix equation $AX+X^TC= B$.
J. Franklin. Inst. 344 (2007) 1056--1062.


 \bibitem{reh1}A.~Rehman, Z.H.~He, Y.~Zhang, Constraint generalized Sylvester matrix equations. Automatica 69 (2016) 60-64.
   \bibitem{reh2}
  A.~Rehman, Q.W.~Wang, Z.H.~He, Solution to a system of real quaternion matrix equations
encompassing $\eta$-Hermicity. Appl. Math. Comput. 265 (2015) 945-957.

 \bibitem{reh3}
  A.~Rehman, Q.W.~Wang, I.~Ali, M.~Akram, M.O.~Ahmad,
A constraint system of generalized Sylvester quaternion matrix equations. Adv. Appl Clifford Algebras 27 (4) (2017) 3183-3196.





\bibitem{shah}A. Shahzad, B.L. Jones, E.C. Kerrigan, G.A. Constantinides, An efficient algorithm for the solution
of a coupled Sylvester equation appearing in descriptor systems, Automatica 47 (2011) 244-248.

 \bibitem{shi}S.Y.~Shi,  Y.~Chen,  Least squares solution of matrix equation $AXB +CYD =
E$, SIAM J. Matrix Anal. Appl. 24 (2003) 802-808.
\bibitem{sim}\c{S}im\c{s}ek,~S., Sarduvan,~M., \"{O}zdemir,~H.:  Centrohermitian and skew-centrohermitian solutions to the minimum residual and matrix nearness problems of the quaternion matrix equation   $(AXB,DXE)=(C,F)$.  Adv. Appl. Clifford Algebras 27(3)  (2017) 2201--2214.
\bibitem{song1}
 G.J.~Song, Q.W.~Wang, H.X.~Chang,  Cramer rule for the unique solution of
restricted matrix equations over the quaternion skew field, Comput. Math. Appl.
61 (2011) 1576-1589.

\bibitem{song2}G.J.~Song,  C.Z.~Dong, New results on condensed Cramer's rule for the general solution to some restricted quaternion matrix equations, J. Appl. Math. Comput. 53 (2017) 321-341.

\bibitem{song3} G.J.~Song, Q.W.~Wang, Condensed Cramer rule for some restricted quaternion
linear equations, Appl. Math. Comp. 218 (2011) 3110-3121.

\bibitem{song5}
 G.J.~Song,  Q.W.~Wang,   S.W. Yu, Cramer's rule for a system of quaternion matrix equations with applications, Appl. Math. Comp. 336 (2018) 490--499.


 \bibitem{syrm1} V.L. Syrmos, F.L. Lewis, Output feedback eigenstructure assignment using two Sylvester equations,
IEEE Trans. Automat. Control 38 (1993) 495-499.
\bibitem{syrm2} V.L. Syrmos, F.L. Lewis, Coupled and constrained Sylvester equations in system design, Circuits
Systems Signal Process. 13 (6) (1994) 663-694.


\bibitem{varg}A. Varga, Robust pole assignment via Sylvester equation based state feedback parametrization, in:
IEEE International Symposium on Computer-Aided Control System Design, CACSD 2000, vol. 57,
2000, pp. 13-18.
  \bibitem{wang04}     Q.W.~Wang, A system of matrix equations and a linear matrix equation over arbitrary regular rings
with identity, Linear  Algebra  Appl. 384 (2004) 43-54.

\bibitem{wang09} Q.W.~Wang, J.W.~van~der~Woude, H.X.~Chang,
A system of real quaternion matrix equations with
applications, Linear Algebra Appl. 431(1) (2009) 2291-2303.

\bibitem{wua}  A.G. Wu, F.Zhu,  G.R.Duan,  Y.Zhang,  Solving the generalized Sylvester
matrix equation $AV + BW = EVF$ via a Kronecker map, Appl. Math. Lett.
21(2008) 1069-1073.







\bibitem{xu}G.~Xu,  M.~Wei, D.~Zheng, On  solutions  of  matrix  equation $AXB+
CYD = E$,  Linear Algebra Appl.  279 (1998) 93-109.



\bibitem{yuan1} S.F.~Yuan,  Q.W.~Wang, Y.B.~Yu, Y.~Tian, On hermitian solutions of the split
quaternion matrix equation $AXB +CXD= E$, Adv. Appl. Clifford Algebras 27(4) (2017) 3235-3252.

 \bibitem{yuan2}S.F.~Yuan, Q.W.~Wang,  Two special kinds of least squares solutions for the quaternion matrix equation
$AXB+CXD=E$, Electron. J. Linear Algebra 23 (2012) 57-74.
\bibitem{yuan3} S.F.~Yuan, A.P.~Liao, Least squares solution of the quaternion matrix equation $X - A\bar{X}B = C$ with
the least norm, Linear Multilinear Algebra 59 (2011) 985-998.

 \bibitem{zhan}
  X.~Zhang, A system of generalized Sylvester quaternion matrix equations and its applications.
Appl. Math. Comput. 273 (2016) 74-81.

\bibitem{zhang1} Y.N. Zhang, D.C. Jiang, J. Wang, A recurrent neural network for solving Sylvester equation with
time-varying coeficients, IEEE Trans. Neural Netw. 13 (5) (2002) 1053-1063.





\bibitem{zhou}B. Zhou, L. James, G.R. Duan, On Smith-type iterative algorithms for the Stein matrix equation,
Appl. Math. Lett. 22 (7) (2009) 1038-1044.

























\end{thebibliography}
\end{document}